\newcommand{\lct}{\mathrm{lct}}
\newcommand{\calV}{\mathcal{V}}
\newcommand{\calF}{\mathcal{F}}
\newcommand{\tw}{\mathrm{tw}}
\newcommand{\Branch}{\mathrm{Br}}
\newcommand{\cC}{{\mathscr C\!}}
\newcommand{\cB}{{\mathscr B\!}}
\newcommand{\twocross}{\mathcal{X}_2}
\newcommand{\A}{\mathcal{A}}
\newtheorem{theorem}{Theorem}[section]
\newtheorem{conjecture}[theorem]{Conjecture}
\newtheorem{proposition}[theorem]{Proposition}
\newtheorem{lemma}[theorem]{Lemma}
\newtheorem{corollary}[theorem]{Corollary}
\newtheorem{claim}{Claim}
\definecolor{myblue}{RGB}{80,80,160}
\definecolor{mygreen}{RGB}{80,160,80}
\definecolor{myred}{RGB}{255,0,0}
\definecolor{mybrown}{RGB}{139,69,19}
\newtheoremstyle{case}{}{}{}{}{\bfseries}{:}{ }{}
\theoremstyle{case}
\newtheorem{case}{Case}
\numberwithin{subcase}{case}
\title{All longest cycles intersect in partial 3-trees}
\author{Juan Guti\'errez\textsuperscript{1}\\
	{\footnotesize \textsuperscript{1}Departamento de Ciencia de la Computación}\vspace{-.2cm}\\
	{\footnotesize Universidade de Ingeniería y Tecnología, Perú}
	\footnote{A previous version of this paper was supported by FAPESP (Proc.~2015/08538-5).
  E-mail:
  jgutierreza@utec.edu.pe.}
}
\begin{document}

\maketitle

\begin{abstract}
  We show that all longest cycles intersect in 2-connected
  partial 3-trees.
\end{abstract}

\section{Introduction}

It is known that, in a 2-connected graph, 
every pair of longest cycles intersect each other in at least two vertices.
A natural question is whether all longest
cycles have a vertex in common in 2-connected graphs (if the graph is not 2-connected, two longest cycles can be disjoint).
This has in general a negative answer, 
as the Petersen's graph shows.
However, there are some graph classes for which this question has a positive answer, such
as
dually chordal graphs \cite{Jobson16} (a class of graphs
that includes doubly chordal, strongly chordal, and interval graphs)
and 3-trees \cite{Fernandes17}.
In this paper, we generalize the later result
by showing that all longest cycles intersect in 2-connected
graphs with treewidth at most~3,
also known as partial~3-trees.
A previous extended abstract containing this result was presented
at LATIN 2018 \cite{Gutierrez18}.
A similar question for paths instead of cycles has been approached in \cite{Cerioli2020}.

This paper is organized as follows.
In Section~\ref{section:basic-concepts-pathscycles}, we establish basic concepts on paths and cycles.
In Section~\ref{section:basic-concepts-treewidth},
we give definitions on tree decompositions
and branches.
In Section~\ref{section:maintheorem},
we state
the main lemma (Lemma~\ref{lemma:lct-tw3-lct=1-or-fenced})
and proceed to the proof of the main result (Theorem~\ref{theorem:lct-tw3-lct=1}).
The other two sections contain the most technical
parts of the proof.
Section~\ref{section:mainlemma} contains the proof of the main lemma,
using auxiliary results proved in the Section \ref{section:auxiliarylemmas}.
Finally, in Section~\ref{section:conclusion} we present some concluding remarks.
In this paper, all graphs considered are simple and the notation used is standard \cite{BondyM08,Diestel10}.



\section{Paths and Cycles}\label{section:basic-concepts-pathscycles}

Given paths~$C'$ and~$C''$,
if~$(V(C') \cup V(C''), E(C') \cup E(C''))$ is a path or a cycle,
it is denoted by~$C' \cdot C''$.
For a vertex~$v$ in a path~$P$, let~$P'$ and~$P''$ be
the paths such that~$P = P' \cdot P''$ with~$V(P') \cap V(P'')=\{v\}$.  
We refer to these two paths as the  \emph{$v$-tails} of~$P$.
For a pair of vertices~$\{a,b\}$ in a cycle~$C$, 
let~$C'$ and~$C''$ be the paths 
such that~$C = C' \cdot C''$ and~$V(C') \cap V(C'') = \{a,b\}$.  
We refer to these paths as the \emph{$ab$-parts}\index{parts of a cycle} of~$C$.
Moreover, we can extend this notation and define,
for a triple of vertices~$\{a,b,c\}$
in a cycle~$C$, the \emph{$abc$-parts} of~$C$;
and, when the context is clear, we denote by $C_{ab}$, $C_{bc}$, and $C_{ac}$ the corresponding~$abc$-parts of~$C$.
A similar notation is used to define the
the \emph{$abcd$-parts} of~$C$ for a given subset of four vertices~$\{a,b,c,d\}$ in $C$.

Let~$G$ be a graph and let~$S \subseteq V(G)$. 
We say that~$S$ \emph{separates} vertices~$u$ and~$v$ if~$u$ and~$v$ are
in different components of~$G-S$.
Let~$X \subseteq V(G)$. We say that~$S$ separates~$X$
if~$S$ separates at least two vertices of~$X$.

We say that a path or cycle~$C'$ \emph{$k$-intersects}~$S$\index{$k$-intersects}
if~${|V(C') \cap S|=k}$.
Moreover, we also say that~$C'$~$k$-intersects~$S$ at~$V(C') \cap S$.
A path or cycle~$C'$ \emph{crosses}~$S$ if~$S$ separates~$V(C')$
in~$G$. Otherwise,~$S$ \emph{fences}~$C'$.\index{fence} 
If~$C'$ crosses~$S$ and~$k$-intersects~$S$, then we say 
that~$C'$~$k$-\emph{crosses}~$S$.\index{$k$-cross}
We also say that~$C'$~$k$-crosses~$S$ at~$V(C') \cap S$.
If~$C'$ is fenced by~$S$ and~$k$-intersects~$S$, then we say that~$C'$ 
is~$k$-\emph{fenced} by~$S$.
Two cycles are $S$-equivalent
if they intersect $S$ at the same set of vertices. 
(Figure \ref{fig:defs-fenced-crosses}).
\footnote{The terms fenced and crossing were first coined at \cite{DeRezende13}.}

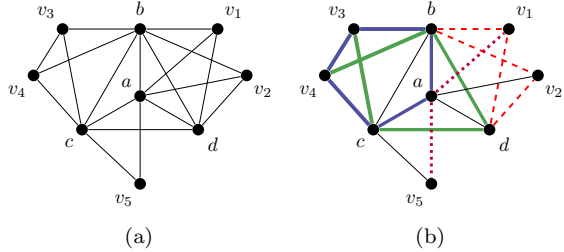
\begin{figure}[h]
\centering
\subfigure[ ]{
\resizebox{.3\textwidth}{!}{
\begin{tikzpicture}
[scale=0.7, label distance=3pt, every node/.style={fill,circle,inner sep=0pt,minimum size=6pt}]
 
    \node at (0,0)[myblue,label=below left :$c$,fill=black, circle](c) {};
    \node at (3,0)[myblue,label=below right:$d$,fill=black, circle](d) {};
    \node at (1.5,2.5981)[myblue,label=above :$b$,fill=black, circle](b) {};
    \node at (3.5,2.5981)[myblue,label=above right:$v_1$,fill=black, circle](v1) {};
    \node at (4.25,1.4)[myblue,label=below right:$v_2$,fill=black, circle](v2) {};
    \node at (-0.5,2.5981)[myblue,label=above left:$v_3$,fill=black, circle](v3) {};

    \node at (1.5,0.866)[myblue,label=above   left:$a$,fill=black, circle](a) {};
    \node at (-1.25,1.4)[myblue,label=below left:$v_4$,fill=black, circle](v4) {};
    \node at (1.5,-1.4)[myblue,label=below left:$v_5$,fill=black, circle](v5) {};

    \draw  (a) -- (b);
    \draw  (b) -- (c);
    \draw  (a) -- (c);
    \draw (a) -- (d);
    \draw (b) -- (d);
    \draw (c) -- (d);
    
    \draw (b) -- (v1);
    \draw (d) -- (v1);
    \draw (b) -- (v2);
    \draw (d) -- (v2);
    
    \draw (b) -- (v3);
    \draw (c) -- (v4);
    \draw (v3) -- (v4);
    \draw (v3) -- (c);
    \draw (v4) -- (b);
    
    \draw (a) -- (v1);
    \draw (a) -- (v2);
    \draw (a) -- (v5);
    \draw (v5) -- (c);

\end{tikzpicture}}
}
\subfigure[ ]{
\resizebox{.3\textwidth}{!}{
\begin{tikzpicture}
[scale=0.7, label distance=3pt, every node/.style={fill,circle,inner sep=0pt,minimum size=6pt}]
 
    \node at (0,0)[myblue,label=below left :$c$,fill=black, circle](c) {};
    \node at (3,0)[myblue,label=below right:$d$,fill=black, circle](d) {};
    \node at (1.5,2.5981)[myblue,label=above :$b$,fill=black, circle](b) {};
    \node at (3.5,2.5981)[myblue,label=above right:$v_1$,fill=black, circle](v1) {};
    \node at (4.25,1.4)[myblue,label=below right:$v_2$,fill=black, circle](v2) {};
    \node at (-0.5,2.5981)[myblue,label=above left:$v_3$,fill=black, circle](v3) {};

    \node at (1.5,0.866)[myblue,label=above   left:$a$,fill=black, circle](a) {};
    \node at (-1.25,1.4)[myblue,label=below left:$v_4$,fill=black, circle](v4) {};
    \node at (1.5,-1.4)[myblue,label=below left:$v_5$,fill=black, circle](v5) {};

    \draw  [color=myblue][line width=1.7pt] (a) -- (b);
    \draw  (b) -- (c);
    \draw  [color=myblue][line width=1.7pt] (a) -- (c);
    \draw (a) -- (d);
    \draw  [color=mygreen] [line width=1.7pt] (b) -- (d);
    \draw  [color=mygreen] [line width=1.7pt] (c) -- (d);
    
    \draw [dashed] [color=red] [line width=1.0pt] (b) -- (v1);
    \draw [dashed] [color=red] [line width=1.0pt] (d) -- (v1);
    \draw [dashed] [color=red] [line width=1.0pt] (b) -- (v2);
    \draw [dashed] [color=red] [line width=1.0pt] (d) -- (v2);
    
    \draw  (a) -- (v2);

    \draw [color=myblue][line width=2pt](b) -- (v3);
    \draw  [color=myblue][line width=2pt](c) -- (v4);
    \draw  [color=myblue][line width=2pt](v3) -- (v4);
    \draw  [color=mygreen] [line width=2pt] (v3) -- (c);
    \draw  [color=mygreen] [line width=2pt] (v4) -- (b);
    
    \draw  [dotted] [color=purple][line width=1.5pt](a) -- (v1);
    \draw   [dotted] [color=purple][line width=1.5pt] (a) -- (v5);
    \draw (v5) -- (c);

    \end{tikzpicture}}
}
\caption{(a) A graph~$G$ with~$S=\{a,b,c,d\}$.
    (b) Consider paths~$P_1=v_1av_5$ and~$P_2=v_3cdbv_4$,
    and cycles~$C_1=v_1bv_2dv_1$ and~$C_2=v_3v_4cabv_3$. Then~$P_1$ and~$C_1$
    cross~$S$, and~$P_2$ and~$C_2$ are fenced by~$S$.
    Moreover,~$P_1$ 1-crosses~$S$,~$P_2$ is~$3$-fenced by~$S$,
    $C_1$ 2-crosses~$S$ and~$C_2$ is 3-fenced by~$S$.
    (Also note that path~$cd$ and cycle~$abda$ are fenced by~$S$.)
    Paths~$P_2$ and~$v_1bcdv_2$ are~$S$-equivalent.
    Cycles~$C_2$ and~$v_1bcv_5av_1$ are~$S$-equivalent.
    }
\label{fig:defs-fenced-crosses}
\end{figure}

A cycle in a graph~$G$ is called a \emph{longest cycle}
if it has maximum length over all cycles in~$G$.
Let~$S \subseteq V(G)$.
The next proposition is well known.
We use it
several times through the text without making any reference
to it.

\begin{proposition}\label{prop:two-longest-cycles-intersect}
Let~$C$ and~$D$ be a pair of longest cycles in a 2-connected graph~$G$.
Then~${|V(C) \cap V(D)| \geq 2}$.
\end{proposition}

\section{Tree Decomposition and Branches}
\label{section:basic-concepts-treewidth}

A \emph{tree decomposition} \cite[p.~337]{Diestel10} of a graph~$G$
is a pair~$(T, \calV)$, consisting of a tree~$T$ and a collection
$\calV = \{ V_t: t \in V(T)\}$ of (different)
\emph{bags}~$V_t \subseteq V(G)$,
that satisfies the following three conditions:
\begin{itemize}
\item $\bigcup_{t \in V(T)} V_t = V(G);$
\item for every~$uv \in E(G)$, there exists
  a bag~$V_t$ such that~$u,v \in V_t;$
\item if a vertex~$v$ is in two different bags~$V_{t_1}, V_{t_2}$,
  then~$v$ is also in any bag~$V_t$ such that~$t$ is on the (unique)
  path from~$t_1$ to~$t_2$ in~$T$.
\end{itemize}

The \emph{treewidth}~$tw(G)$ is the number
$\min \{ \max\{|V_t|-1: t\in V(T)\}: (T, \calV)$ is a
tree decomposition of $G  \}$.
We refer to the vertices of~$T$ as \emph{nodes}.

If~$G$ is a graph with treewidth~$k$,
then we say that~$(T, \mathcal{V})$ is a \emph{full tree decomposition}\index{full tree decomposition}
of~$G$ if~$|V_t|=k+1$ for every~$t\in V(T)$,
and~$|V_t \cap V_{t'}|=k$ for every~$tt' \in E(T)$.
\begin{proposition}[{\cite[Lemma 8]{Bodlaender98}}{\cite[Theorem 2.6]{Gross14}}]\label{prop:full-tree-dec}
Every graph has a full tree decomposition.
\end{proposition}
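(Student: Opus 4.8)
The plan is to take any tree decomposition $(T,\calV)$ of $G$ realizing the optimal width $k=\tw(G)$ and repair it in two stages: first force every bag to have exactly $k+1$ vertices, and then force every pair of adjacent bags to share exactly $k$ vertices. The first two axioms of a tree decomposition are routinely maintained throughout --- no modification introduces a vertex of $G$ that was not already present, and the only bags we ever delete are subsets of surviving bags --- so the only delicate points are the third axiom, that for each $v\in V(G)$ the nodes $t$ with $v\in V_t$ induce a subtree $T_v$ of $T$, and termination.

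\textbf{Stage 1 (all bags of size $k+1$).} Since $\tw(G)=k$ we have $|V(G)|\ge k+1$, so whenever some bag has fewer than $k+1$ vertices the tree $T$ has at least two nodes. Pick such a bag $V_t$. If some neighbour $t'$ of $t$ satisfies $V_{t'}\setminus V_t\ne\emptyset$, move a vertex $x\in V_{t'}\setminus V_t$ into $V_t$; this preserves the subtree axiom because $T_x$ used to pass through $t'$ and $t$ is adjacent to $t'$, so $T_x\cup\{t\}$ is again a subtree. Otherwise $V_{t'}\subseteq V_t$ for every neighbour $t'$ of $t$, and we contract some edge $tt'$ (delete $t'$ and reattach its other neighbours to $t$), which is legitimate because $V_{t'}\subseteq V_t$. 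The quantity $\sum_{t\in V(T)}(k+1-|V_t|)+|V(T)|$ is a nonnegative integer that decreases by at least $1$ at every step, and no bag ever exceeds size $k+1$ (we enlarge a bag only while it is strictly smaller, one vertex at a time); hence the stage terminates with every bag of size exactly $k+1$.

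\textbf{Stage 2 (all adjacent intersections of size $k$).} Now every edge $tt'$ has $|V_t\cap V_{t'}|\le k$. Choose one with $|V_t\cap V_{t'}|=s<k$, and write $V_t\setminus V_{t'}=\{a_1,\dots,a_p\}$ and $V_{t'}\setminus V_t=\{b_1,\dots,b_p\}$, where $p=k+1-s\ge 2$ (the two differences are equinumerous because the bags are). Replace the edge $tt'$ by a path $t=t_0,t_1,\dots,t_p=t'$, inserting new nodes $t_1,\dots,t_{p-1}$ and setting $V_{t_i}=(V_t\setminus\{a_1,\dots,a_i\})\cup\{b_1,\dots,b_i\}$; then consecutive bags along the path have $k+1$ vertices and meet in exactly $k$. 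The subtree axiom survives because each vertex of $V_t\cap V_{t'}$ lies in every new bag, each $a_i$ lies exactly in $V_{t_0},\dots,V_{t_{i-1}}$ (a subpath ending at $t$) and symmetrically for each $b_i$, while a vertex in neither $V_t$ nor $V_{t'}$ lies in no new bag and, since then $t,t'\notin T_v$, has its subtree $T_v$ untouched by the surgery. This step strictly decreases the number of edges whose intersection has fewer than $k$ vertices and creates no new such edge, so after finitely many repetitions the decomposition is full; a final round of contracting equal adjacent bags (which preserves all sizes $k+1$) restores the requirement that the bags be pairwise distinct.

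\textbf{Main obstacle.} I expect Stage 1 to be the subtle part: merely padding small bags with vertices of neighbouring bags can loop, so the two operations above must be interleaved and then shown to make joint progress --- which is exactly the role of the combined potential $\sum(k+1-|V_t|)+|V(T)|$. Checking the subtree axiom after each single move is mechanical but must be done with care, in particular for the edge subdivision of Stage 2. Everything else is bookkeeping.
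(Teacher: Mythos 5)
Your proof is correct. Note that the paper does not actually prove this proposition: it is imported by citation (Bodlaender's Lemma~8 on ``smooth'' tree decompositions and Gross's Theorem~2.6), so there is no internal proof to compare against. Your two-stage repair --- pad or contract until every bag has size exactly $k+1$, then subdivide each edge $tt'$ with $|V_t\cap V_{t'}|<k$ into a path of single-vertex swaps --- is essentially the standard argument behind the cited results, and the two points that usually get waved at are handled properly here: the combined potential $\sum_t(k+1-|V_t|)+|V(T)|$ rules out the looping that naive padding can cause in Stage~1, and the verification of the subtree axiom for the subdivision bags $V_{t_i}=(V_t\setminus\{a_1,\dots,a_i\})\cup\{b_1,\dots,b_i\}$ is done case by case. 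Two cosmetic remarks: ``move a vertex $x\in V_{t'}\setminus V_t$ into $V_t$'' should be read as \emph{adding} $x$ to $V_t$ without deleting it from $V_{t'}$ (your justification via $T_x\cup\{t\}$ makes this clear, but the wording could mislead); and your final round of contracting equal adjacent bags is vacuous, since after Stage~2 adjacent bags meet in exactly $k<k+1$ vertices and the interpolation axiom then forces all bags to be pairwise distinct --- which is exactly the ``(different)'' requirement in this paper's definition of a tree decomposition.
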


Let~$G$ be a graph and~$(T, \calV)$ be a tree decomposition of~$G$.
Given two different nodes~$t,t' \in V(T)$, we
denote by~$\Branch_t(t')$
the component of~$T-t$ where~$t'$ lies.
We say that such component is a \emph{branch}\index{branch} of~$T$ at~$t$
and that the components
of~$T-t$ are the \emph{branches} of~$T$ at~$t$~\cite{Heinz13}.
Similarly, for a vertex~${v \notin V_t}$, it is denoted by 
$\Branch_t(v)$
the branch~$\Branch_t(t')$ of~$T$ at~$t$ 
such that~$v \in V_{t'}$.
In that case, we also say that~${v \in \Branch_t(t')}$
or that~$v$ is \emph{in}~$\Branch_t(t')$.

Let~$t \in V(T)$.
Let~$C'$ be a path or cycle in~$G$ fenced by~$V_t$.
It is easy to see that, for every $u,v \in V(C')$,
we have $\Branch_t(u)=\Branch_t(v)$.
Hence, when $V(C') \not \subseteq V_t$, we say that
$\Branch_t(C')=\Branch_t(v)$. And, if $V(C') \subseteq V_t$,
we say that $\Branch_t(C')=(\emptyset,\emptyset)$, that is,
a subtree of $T$ with empty set of nodes and edges.

The next proposition relates the concepts of separation and branches.
\begin{proposition}[{\cite[Lemma~12.3.1]{Diestel10}}]\label{prop:core-sep-tt'}
  Let~$tt' \in E(T)$. 
  Let~$u,v \in V(G)$ be such that~${u \notin V_{t}}$ and~${v \notin V_{t'}}$.
  If~$u \in \Branch_t(t')$ and~$v \in \Branch_{t'}(t)$, then~$V_{t} \cap V_{t'}$
  separates~$u$ and~$v$.
\end{proposition}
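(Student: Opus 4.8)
The plan is to realize the set $S := V_t \cap V_{t'}$ as a genuine separator by splitting $V(G)$ according to the two sides of the edge $tt'$ in $T$. First I would delete the edge $tt'$ from $T$, obtaining two subtrees $T_1 \ni t$ and $T_2 \ni t'$, and observe that the node set of $\Branch_t(t')$ coincides with $V(T_2)$ while the node set of $\Branch_{t'}(t)$ coincides with $V(T_1)$: in a tree a node $s$ is reachable from $t'$ while avoiding $t$ precisely when $s$ lies on the $t'$-side of the edge $tt'$. Then I set $A := \bigcup_{s \in V(T_1)} V_s$ and $B := \bigcup_{s \in V(T_2)} V_s$.

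Next I would verify two structural facts. That $A \cup B = V(G)$ is immediate from the first axiom of a tree decomposition. That $A \cap B = S$ needs the third axiom: the inclusion $S \subseteq A \cap B$ is clear since $t \in T_1$ and $t' \in T_2$, and conversely if a vertex $w$ lies in some bag $V_a$ with $a \in V(T_1)$ and in some bag $V_b$ with $b \in V(T_2)$, then the $a$--$b$ path in $T$ traverses the edge $tt'$, hence passes through both $t$ and $t'$, so the third axiom forces $w \in V_t \cap V_{t'} = S$.

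The key step is to show that $G - S$ has no edge joining $A \setminus S$ to $B \setminus S$. Suppose $w_1 w_2 \in E(G)$ with $w_1 \in A \setminus S$ and $w_2 \in B \setminus S$. By the second axiom some bag $V_c$ contains both endpoints, and the node $c$ lies in $T_1$ or in $T_2$. If $c \in V(T_1)$ then $w_2 \in A$, so $w_2 \in A \cap B = S$, a contradiction; the case $c \in V(T_2)$ is symmetric. Hence $A \setminus S$ and $B \setminus S$ meet no common component of $G - S$.

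Finally I would locate the two vertices. Since $u \notin V_t$ we have $u \notin S$, and $u \in \Branch_t(t')$ gives $u \in V_s$ for some $s \in V(T_2)$, so $u \in B \setminus S$; symmetrically $v \in A \setminus S$. Therefore $S = V_t \cap V_{t'}$ separates $u$ and $v$, as claimed. I do not expect a real obstacle here, since this is the classical fact that the adhesion set of an edge of a tree decomposition is a separator; the only point requiring a little care is to reconcile the $\Branch$ notation, which deletes a \emph{node} of $T$, with the edge-deletion picture, i.e.\ to confirm that $\Branch_t(t')$ and the component $T_2$ of $T - tt'$ span the same nodes.
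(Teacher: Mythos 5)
Your proof is correct and complete; the paper itself offers no proof of this proposition, citing it directly as Lemma~12.3.1 of Diestel, and your argument is precisely the standard one behind that lemma (split $T$ at the edge $tt'$, take $A$ and $B$ to be the unions of bags on each side, and check $A\cap B=V_t\cap V_{t'}$ and that no edge of $G$ crosses). Your closing remark reconciling $\Branch_t(t')$, defined by deleting the \emph{node} $t$, with the component of $T-tt'$ containing $t'$ is exactly the one detail worth checking, and you handle it correctly.
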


A~$k$-\emph{clique} in a graph is a set
of $k$ pairwise adjacent vertices.
A~$k$-\emph{tree}\index{$k$-tree} is defined recursively as follows.  
The complete graph on~$k$ vertices is a~$k$-tree.
Any graph obtained from a~$k$-tree by adding a new
vertex and making it adjacent to exactly 
all the vertices of an existing~$k$-clique is also a~$k$-tree.
A graph~$G$ is a \emph{partial} $k$-\emph{tree}\index{partial $k$-tree} if and only if
$G$ is the subgraph of a~$k$-tree. 
Partial~$k$-trees are closely related to the definition
of tree decomposition.
In fact, a graph~$G$ is a partial~$k$-tree if and only if~${\tw(G) \leq k}$ \cite[Theorem 35]{Bodlaender98}.

In what follows, we fix a 2-connected partial 3-tree~$G$ such that~$\tw(G)=3$,
and a full tree decomposition~$(T, \calV)$ of~$G$.

For every~$t\in V(T)$, and for every triple of vertices~$\Delta$
in~$V_t$, it is denoted by~$\cB_{t}(\Delta)$ the union of the branches
of the neighbors of~$t$ in~$T$ such that the corresponding bag
contains~$\Delta$.
That is,
$$\cB_{t}(\Delta)=\bigcup \{ \Branch_{t}(t'): tt'\in E(T) \mbox{ and } \Delta \subseteq V_{t'}\}.$$
For a vertex~$v \in V(G)$,
we say that~$v$ is~$t$-\emph{inside}~$\Delta$\index{$t$-inside vertex}
if~$v \in V_{t'}$ for some~$t' \in V(T)$ with~$\Branch_{t}(t') \subseteq \cB_{t}(\Delta)$.
Otherwise, we say that~$v$  is~$t$-\emph{outside}~$\Delta$.\index{$t$-outside vertex}
When the context is clear, we just say that~$v$ is~$\emph{inside}$\index{inside vertex} 
or~$\emph{outside}$\index{outside vertex}~$\Delta$ (Figure \ref{fig:lct-tw3-inside-outside}).

\begin{figure}[ht]
\centering
\includegraphics[width=.8\textwidth]{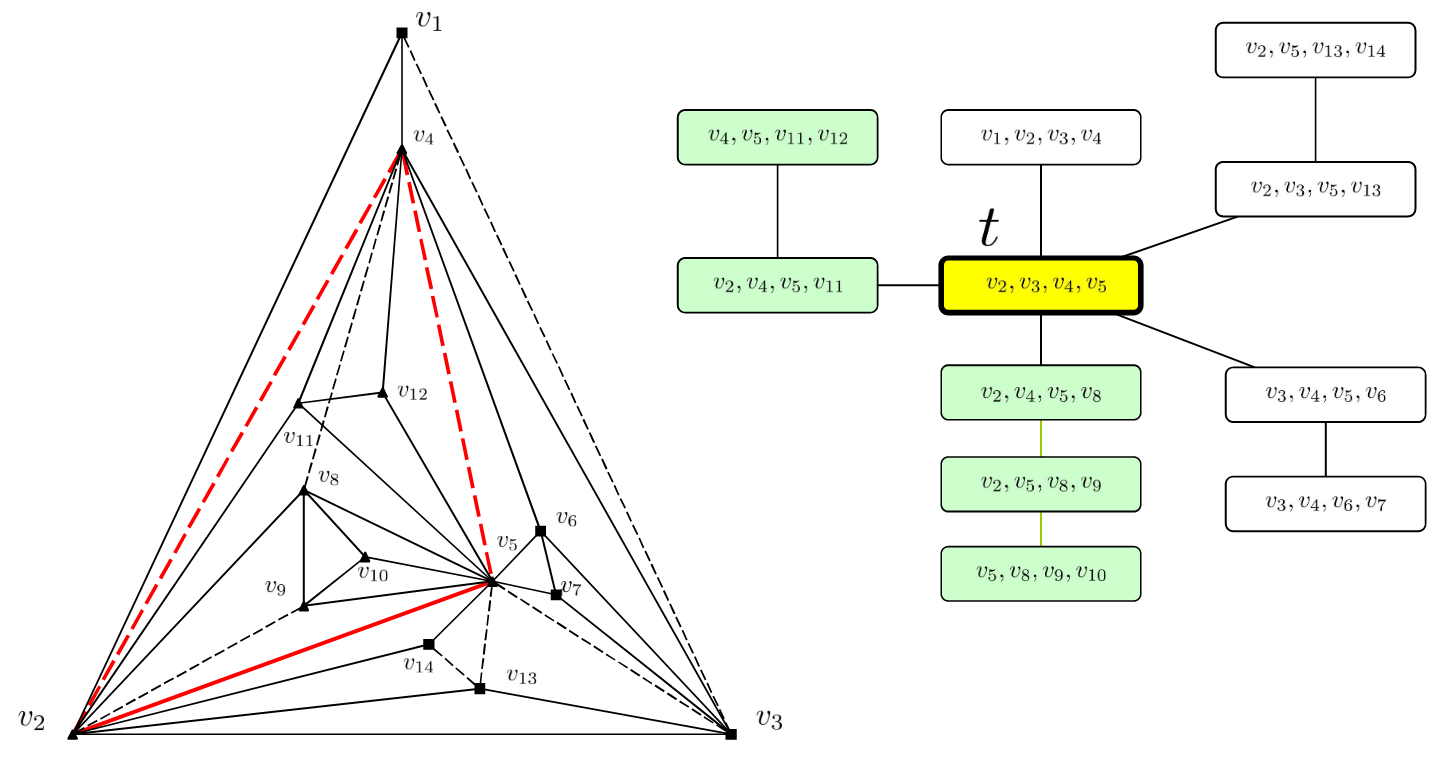}
\caption{A partial 3-tree of treewidth 3 and a corresponding full tree decomposition.
The dashed lines imply that there is no edge between the corresponding vertices.
If~$t$ is the node with~$V_t=\{v_2,v_3,v_4,v_5\}$ and~$\Delta=\{v_2,v_4,v_5\}$,
then the vertices~$v_2,v_4,v_5,v_8,v_9,v_{10},v_{11},v_{12}$ are~$t$-inside~$\Delta$ and 
all the other vertices are~$t$-outside~$\Delta$.
Also, the cycles~$v_2v_5v_8v_2$ and~$v_4v_{12}v_5v_{11}v_4$ are~$t$-inside~$\Delta$,
the cycle~$v_5v_7v_3v_4v_6v_5$ is~$t$-outside~$\Delta$, and
the cycles~$v_2v_5v_{14}v_2$ and~$v_3v_4v_{11}v_5v_6v_3$~$t$-jump~$\Delta$.}
\label{fig:lct-tw3-inside-outside}
\end{figure}

Given a path~$C'$ that 2-intersects~$\Delta$ at the extremes of~$C'$,
we say that~$C'$ 
is~$t$-\emph{inside}~$\Delta$\index{$t$-inside path}
if every vertex of~$C'$ is~$t$-inside~$\Delta$,
and~$C'$ is~$t$-\emph{outside}~$\Delta$\index{$t$-outside path} otherwise.
Given a cycle~$C$ that intersects~$\Delta$ at least twice,
we say that~$C$ is~$t$-\emph{inside}~$\Delta$\index{$t$-inside cycle}
if every~$\Delta$-part of~$C$ is~$t$-\emph{inside}~$\Delta$,
$C$ is~$t$-\emph{outside}~$\Delta$\index{$t$-outside cycle}
if every~$\Delta$-part of~$C$ is~$t$-\emph{outside}~$\Delta$, 
and~$C$~$t$-\emph{jumps}~$\Delta$\index{jump}
if~$C$ has a~$\Delta$-part~$t$-inside~$\Delta$
and a~$\Delta$-part~$t$-outside~$\Delta$.

Again, if the context is clear, we just say that~$C$
is~$\emph{inside}$,~$\emph{outside}$, or \emph{jumps}~$\Delta$
(Figure~\ref{fig:lct-tw3-inside-outside}).
Furthermore, we abuse notation and, when the context is clear, for
a cycle~$C$ that jumps~$\Delta$,
we say that~$C$~$\ell$-jumps~$\Delta$ if~$|V(C) \cap \Delta| = \ell$.
Observe that~$\ell \in \{2,3\}$.
Moreover, if~$C$~$\ell$-jumps~$\Delta$,
we say that~$C$~$\ell$-jumps~$\Delta$ at~$V(C) \cap \Delta$.
Throughout this chapter, we abbreviate a triple of vertices~$\{a,b,c\}$
by~$abc$.


\section{Proof of the main theorem}
\label{section:maintheorem}
The next lemma conceals the heart of the proof of Theorem~\ref{theorem:lct-tw3-lct=1}.
The proof of that lemma is presented in Section~\ref{section:mainlemma}.
Throughout this and the following sections, for a graph $G$, we let $L(G)$ be the length of a longest cycle in $G$.

\begin{lemma} \label{lemma:lct-tw3-lct=1-or-fenced}
  Let~$G$ be a 2-connected graph with treewidth~$3$.
  Let~$(T, \calV)$ be a full tree decomposition of~$G$
  and~$V_t \in \calV$.
  Then either~$\lct(G)=1$ or there exists a longest cycle in~$G$ that is fenced by~$V_t$
  and intersects~$V_t$ at most three times.
\end{lemma}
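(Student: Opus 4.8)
The proof will go by picking a longest cycle $C$ that is "as fenced as possible" with respect to $V_t$ and then showing that, if $\lct(G) \neq 1$, any remaining crossing of $V_t$ can be removed by a local rerouting through a branch, using the full tree decomposition and the bound $|V_t| = 4$. Concretely, I would fix $V_t = \{a,b,c,d\}$ (a $4$-clique-like bag, though not necessarily complete) and argue about how a longest cycle interacts with it: since $|V_t| = 4$, a cycle can intersect $V_t$ in $0,1,2,3,$ or $4$ vertices. If some longest cycle intersects $V_t$ at most three times and is fenced, we are done; otherwise every longest cycle either crosses $V_t$ or $4$-intersects $V_t$.

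\textbf{Key steps.} First I would dispose of the easy structural cases. If there is a longest cycle $C$ with $|V(C) \cap V_t| \le 1$, then it lies entirely inside one branch at $t$ (by the separation property, Proposition~\ref{prop:core-sep-tt'}, together with the fact that $V_t$ is a separator for anything crossing it) and is trivially fenced, so we are done. So assume every longest cycle meets $V_t$ in at least two vertices. Next, suppose some longest cycle $C$ $4$-intersects $V_t$, say at $a,b,c,d$. The four $abcd$-parts of $C$ each live in a single branch at $t$ (or inside $V_t$); I would then try to show that one can reroute two of these parts so as to decrease $|V(C) \cap V_t|$ to $3$ without shortening $C$ — this is where I would invoke the structure of partial $3$-trees and, if necessary, the degree-of-freedom that the bags have size $4$ so that replacing a part that runs in a branch by a chord or a shorter detour inside $V_t$ keeps length maximal. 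The meat, though, is the crossing case: take a longest cycle $C$ that $k$-crosses $V_t$ with $k \in \{2,3\}$ minimal over all longest cycles, and $k$ minimal. Because $C$ crosses $V_t$, it uses parts in at least two distinct branches $\Branch_t(t_1), \Branch_t(t_2)$. The plan is to find, among the branches it visits, one branch $B$ such that the portion of $C$ inside $B \cup V_t$ can be replaced: either the replacement yields a strictly longer cycle (contradicting maximality of $C$), or a longest cycle that crosses $V_t$ fewer times (contradicting minimality of $k$), or a longest cycle fenced by $V_t$ meeting it at most three times (done). To build the replacement I would use that inside $B \cup V_t$ the graph is still $2$-connected after suitable surgery — or rather, use Menger/fan-type arguments on the at most four attachment vertices in $V_t$ — to route a path through $B$ connecting the appropriate pair of $V_t$-vertices that $C$ already uses, thereby "absorbing" $B$ into one side and reducing the crossing number.

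\textbf{Main obstacle.} The hard part will be the crossing case and in particular controlling \emph{length}: rerouting through a branch is easy to do topologically but one must guarantee the new cycle is still a longest cycle. I expect to need the hypothesis $\lct(G) \neq 1$ exactly here — it ensures that two longest cycles share at least two vertices (Proposition~\ref{prop:two-longest-cycles-intersect}) in a way one can exploit, and more importantly it is what lets the induction/minimality on the crossing number terminate without collapsing to a single shared vertex. A further technical nuisance is that $V_t$ need not induce a clique (it is a bag of a partial $3$-tree, not a $3$-tree), so some pairs among $\{a,b,c,d\}$ may be non-adjacent, and the rerouting paths genuinely have to go through branches rather than through edges of $V_t$; handling the case analysis over which pairs in $V_t$ are actually connected by the cycle's parts, and over $\ell$-jumps for $\ell \in \{2,3\}$, is where the bulk of the technical work (deferred to Sections~\ref{section:mainlemma} and~\ref{section:auxiliarylemmas}) will lie.
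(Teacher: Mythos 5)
Your proposal is a plan rather than a proof, and the mechanism you propose for the hard cases would not go through. The difficulty you correctly identify---guaranteeing that a rerouted cycle is still longest---is not resolved by your strategy of taking a single longest cycle with minimal crossing number and ``absorbing'' a branch via Menger-type paths: a path through a branch connecting two attachment vertices of $V_t$ has no reason to have the right length, and minimality of the crossing number gives no leverage on lengths at all. The paper's proof uses a fundamentally different engine. It exploits $\lct(G)>1$ not through Proposition~\ref{prop:two-longest-cycles-intersect} (which holds in any $2$-connected graph regardless of $\lct$) but by producing, for each prescribed vertex $v$, a longest cycle avoiding $v$; it then works with several longest cycles simultaneously and performs exchange arguments: from the parts of two (or three, or four) longest cycles it assembles two new cycles whose lengths sum to more than $2L$, a contradiction. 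This is how every case is closed in Lemmas~\ref{lemma:lct-tw3-X2-2-nonequivalent-then-F}, \ref{lemma:lct-tw3-X2-all-equivalent-then-F} and~\ref{lemma:lct-tw3-X2-empty-then-F}, which split on whether the set $\twocross$ of longest cycles that $2$-cross $V_t$ is empty, contains two non-$V_t$-equivalent members, or consists only of $V_t$-equivalent cycles---a trichotomy absent from your outline.

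The second missing idea is the auxiliary result that makes those lemmas work (Corollary~\ref{corollary:lct-tw3-lct>1-implies-not-jump}, via Lemmas~\ref{lemma:lct-tw3-pairwise-intersect} and~\ref{lemma:lct-tw3-invariant}): if for every pair in a triple $\Delta\subseteq V_t$ some longest cycle $2$-jumps $\Delta$ at that pair, then all longest cycles jumping $\Delta$ meet at a common vertex $t$-inside $\Delta$, so $\lct(G)>1$ forces the existence of a longest cycle that does not jump $\Delta$. The proof of that is an induction that descends from the bag $V_t$ to a neighbouring bag $V_{t^{*}}$ inside the relevant branch, decreasing the number of vertices inside the triangle. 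Nothing in your plan plays this role, and without it the case analysis over jumps that you defer to ``the bulk of the technical work'' has no termination mechanism. Your easy observation that a cycle meeting $V_t$ at most once is fenced is fine, but the $4$-intersection case is also not handled by ``rerouting two parts without shortening $C$'': in the paper a $4$-intersecting longest cycle is eliminated by an exchange with other longest cycles, not by modifying it in isolation.
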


Using this lemma, we derive the main result of this section.

\begin{theorem}\label{theorem:lct-tw3-lct=1}
If~$G$ is a 2-connected partial 3-tree, then~$\lct(G)=1$.
\end{theorem}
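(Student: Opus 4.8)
The plan is to deduce Theorem~\ref{theorem:lct-tw3-lct=1} from Lemma~\ref{lemma:lct-tw3-lct=1-or-fenced} together with the Helly property of subtrees of a tree. Since $G$ is $2$-connected it contains a cycle, so $\tw(G)\ge 2$; if $\tw(G)\le 2$ then $\tw(G)=2$ and $G$ is series--parallel, which I would handle separately (and which is simpler). So assume $\tw(G)=3$ and, by Proposition~\ref{prop:full-tree-dec}, fix a full tree decomposition $(T,\calV)$ of $G$, so that every bag has size $4$ and adjacent bags share exactly $3$ vertices. Suppose, for a contradiction, that $\lct(G)\ne 1$: no vertex of $G$ lies on every longest cycle.

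First I would locate a bag meeting every longest cycle. For a longest cycle $C$, put $T_C=\{t\in V(T):V_t\cap V(C)\ne\emptyset\}$; since $C$ is connected, $T_C$ induces a subtree of $T$ (cf.\ Proposition~\ref{prop:core-sep-tt'}). By Proposition~\ref{prop:two-longest-cycles-intersect} any two longest cycles $C,D$ share a vertex, and every bag containing it lies in $T_C\cap T_D$; hence $\{T_C\}_C$ is a family of pairwise intersecting subtrees of $T$, so by the Helly property of subtrees of a tree the set $\mathcal{R}:=\bigcap_C T_C$ is a nonempty subtree. I would fix $r$ to be a leaf of $\mathcal{R}$ (its unique node if $|V(\mathcal{R})|=1$); then $V_r$ meets every longest cycle.

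Now I would apply Lemma~\ref{lemma:lct-tw3-lct=1-or-fenced} at $r$. As $\lct(G)\ne 1$, there is a longest cycle $C_0$ fenced by $V_r$ with $Z:=V(C_0)\cap V_r$ of size at most $3$, and $Z\ne\emptyset$ since $r\in V(\mathcal{R})$. We may assume $V(C_0)\not\subseteq V_r$ (otherwise $L(G)=3$, forcing $G=K_3$ and the conclusion), so $V(C_0)\setminus V_r$ lies in a single branch $\Branch_r(t')$ at $r$. Because $r$ is a leaf of $\mathcal{R}$, the branch $\Branch_r(t')$ contains no node of $\mathcal{R}$, so $V_{t'}$ misses some longest cycle $D$; since $D$ meets $V_r$, the subtree $T_D$ is contained in $\Branch_{t'}(r)$, which shares no node with $\Branch_r(t')$. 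By the tree-decomposition axioms, $V(D)$ then avoids every vertex of $V(C_0)\setminus V_r$, whence $V(C_0)\cap V(D)\subseteq Z$; Proposition~\ref{prop:two-longest-cycles-intersect} now forces $|Z|\ge 2$ and $|V(D)\cap Z|\ge 2$.

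The remaining task — and the main obstacle of the deduction — is to force a single vertex onto all longest cycles. The plan is to iterate Lemma~\ref{lemma:lct-tw3-lct=1-or-fenced}: re-apply it inside the branch $\Branch_r(t')$, or with the roles of $r$ and $t'$ interchanged, so as to repeatedly shrink the set $Z$ of vertices common to the fenced cycle and the current bag (or to move to a good bag where the lemma returns a strictly smaller intersection), until $|Z|=1$ and that single vertex is seen to lie on every longest cycle, contradicting $\lct(G)\ne 1$. Naturally, the real weight of the theorem rests on Lemma~\ref{lemma:lct-tw3-lct=1-or-fenced} itself, proved in the subsequent sections.
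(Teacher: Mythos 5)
Your opening reduction is sound and even pleasant: the Helly argument producing a node $r$ whose bag meets every longest cycle, and the observation that a fenced cycle $C_0$ at a leaf $r$ of $\mathcal{R}$ can only meet a longest cycle $D$ missing $V_{t'}$ inside $Z=V(C_0)\cap V_r$, are both correct (the paper instead just notes $L\ge 5$ to dispose of the degenerate case). This is a genuinely different way of locating the critical bag from the paper's, which orients an edge $tt'$ of $T$ whenever some cycle of $\calF(t)$ lives in $\Branch_t(t')$ and takes the last arc of a maximal directed path. That device produces \emph{two} fenced longest cycles $C\in\calF(t)$ and $D\in\calF(t')$ facing each other across the edge $tt'$, so that $V(C)\cap V(D)$ is forced into the three-element set $V_t\cap V_{t'}=\{a,b,c\}$ and, crucially, \emph{every} longest cycle is then shown to meet $\{a,b,c\}$ at least twice.

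The genuine gap is your final paragraph. ``Iterate the lemma so as to shrink $Z$'' is a plan, not an argument: Lemma~\ref{lemma:lct-tw3-lct=1-or-fenced} only ever returns \emph{some} fenced longest cycle meeting the chosen bag at most three times, and nothing in your setup forces that intersection to decrease upon re-application inside $\Branch_r(t')$, nor explains why a vertex of $Z$ would end up on all longest cycles. The paper's deduction at this point is not an iteration but a direct contradiction: since $\lct(G)>1$ there are longest cycles $F$ avoiding $a$ and $H$ avoiding $c$, which by the ``meets $\{a,b,c\}$ at least twice'' observation must pass through $\{b,c\}$ and $\{a,b\}$ respectively; a case analysis on whether $C$ and $D$ meet $\{a,b,c\}$ in two or three vertices then splices parts of $C$, $D$, $F$, $H$ into cycles whose lengths force $|C'|=|C''|=|D_{ab}|=L/2$ and eventually $|C_{ac}|=0$, which is absurd. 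Your single bag $V_r$ has four vertices, and you only know that every longest cycle meets $V_r$ at least once --- not that it meets a fixed three-element subset at least twice --- so the pigeonhole step that pins $F$ to $\{b,c\}$ and $H$ to $\{a,b\}$ does not go through as stated. To complete your route you would essentially have to reconstruct the two-facing-cycles configuration across an edge of $T$, at which point you are back to the paper's argument; as written, the heart of the deduction from the main lemma is missing.
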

\begin{proof}
If~$\tw(G) \leq 2$ then~${\lct(G)=1}$ by \cite[Theorem 2]{Gutierrez18}.
Therefore, we may assume that~${\tw(G) = 3}$.
Assume by contradiction that~${\lct(G)>1}$.
It is straightforward to see that
${L:=L(G) \geq 5}$.
Let~$(T, \calV)$ be a full tree decomposition of~$G$,
which exists by Proposition~\ref{prop:full-tree-dec}.
  For every~${t \in V(T)}$,
  let~$\calF(t)$ be the set of longest cycles in~$G$
  that are fenced by~$V_t$ and intersect~$V_t$ at most three times.
  By Lemma~\ref{lemma:lct-tw3-lct=1-or-fenced},~${\calF(t) \neq \emptyset}$
  for every~${t \in V(T)}$.
  Observe that, as~$L \geq 5$, for every such~$t$,
  no cycle in~$\calF(t)$
  is contained in~$G[V_t]$.
  
  We direct some of the edges of $T$ to create an auxiliary directed forest $T'$ in the following way: $tt' \in E(T')$ if and only if $tt' \in E(T)$ and there exists a cycle $C \in \mathcal{F}(t)$
with $\Branch_t(C)=\Branch_t(t')$.
By taking the last arc $tt'$ of a maximal directed path in $T'$, we will have
two longest cycles~${C \in \calF(t)}$
and~${D \in \calF(t')}$ 
such that~${\Branch_t(C)=\Branch_t(t')}$ and~${\Branch_{t'}(D)=\Branch_{t'}(t)}$.

Note that the bags containing vertices of~$C$ are 
in~$\Branch_t(t') \cup \{t\}$, and the bags containing
vertices of~$D$ are in~$\Branch_{t'}(t) \cup \{t'\}$.
Since~$\Branch_t(t')$ and~$\Branch_{t'}(t)$ are disjoint, 
${V(C) \cap V(D) \subseteq V_{t} \cup V_{t'}}$.  
Let~${V_t=\{a,b,c,u\}}$ and~${V_{t'}=\{a,b,c,w\}}$.

It is straightforward to see that
$u \notin V(C)$ and that $w\notin V(D)$.
and therefore
~$V(C) \cap V(D) \subseteq V_{t}\cap V_{t'}$.
As~${|V(C) \cap V(D)| \geq 2}$,
this implies that~${|V(C) \cap V_{t}|\geq 2}$
and~${|V(D) \cap V_{t'}|\geq 2}$.
  Observe that any other longest cycle intersects~${V_t \cap V_{t'}}$ at least twice.
  Otherwise it would intersect~$C$ or~$D$ at most once, a contradiction.
  As~$\lct(G)>1$, there exists a longest cycle~$F$ that does not contain~$a$ and a longest cycle~$H$ that does not contain~$c$. By the previous
  observation,~$F$ intersects~${V_t \cap V_{t'}}$ at~$\{b,c\}$
  and~$H$ intersects~${V_t \cap V_{t'}}$ at~$\{a,b\}$.
  Let~$F'$ and~$F''$ be the two~$bc$-parts of~$F$, with~$|F'|\geq |F''|$.
   Let~$H'$ and~$H''$ be the two~$ab$-parts of~$H$, with~$|H'|\geq |H''|$.
  The rest of the proof is divided into whether~$C$ and~$D$ 2-intersect or 3-intersect~${V_t \cap V_{t'}}$.
  
  \setcounter{case}{0}
  
  \begin{case}~$C$ {2-intersects}~$V_t \cap V_{t'}$.
  
  Without loss of generality, assume that~$C$ {2-intersects}~$V_t \cap V_{t'}$ at~$\{a,b\}$.
  Suppose for a moment that~$F'$ is internally disjoint from~$C$.
  As~$F$ and~$C$ intersect each other at least twice,~$F''$ and~$C$ intersect each other
  at a vertex~$x$ different from~$b$.
  Let~$F''_{cx}$ be the subpath of~$F''$ that is internally disjoint from~$C$, starts at~$c$ and ends at~$x$.
  Let~$C_{xb}$ be the subpath of~$C$ with extremes~$x$ and~$b$ such 
  that~${|C_{bx}| \geq L/2}$. Then~${F' \cdot F''_{cx} \cdot C_{xb}}$ is a cycle longer than~$L$, a contradiction
  (Figure~\ref{fig:lct=1-partial-3-trees}(a)).
  We conclude that~$F'$ is internally disjoint from~$D$.
  If~$D$ {2-intersects}~$V_t \cap V_{t'}$, then,
  as~$|V(C) \cap V(D)| \geq 2$, we must have that~$D$ intersects~$V_t \cap V_{t'}$
  at~$\{a,b\}$.
  But then we obtain a contradiction as before, with~$D$ instead of~$C$.
  So~$D$ {3-intersects}~$V_t \cap V_{t'}$ (at~$\{a,b,c\}$).
    Let~$D_{ab},D_{ac},D_{bc}$ be the~$abc$-parts of~$D$.
  As~${C'' \cdot D_{ab}}$ and~${(D-D_{ab}) \cdot C'}$ are both longest cycles, we have that
 ~${|C'|=|D_{ab}|}$. Similarly,~${|C''|=|D_{ab}|}$.
  Hence~${|C'|=|C''|=|D_{ab}|=L/2}$.
  As~$F'$ is internally disjoint from~$D$,
  ~${D_{ab} \cdot F' \cdot D_{ca}}$ is a cycle longer than~$L$, again a contradiction
  (Figure~\ref{fig:lct=1-partial-3-trees}(b)).
  \end{case}    
  
  \begin{case} Both~$C$ and~$D$ 3-intersect~$V_t \cap V_{t'}$.
  
   Let~$C_{ab},C_{ac},C_{bc}$ be the~$abc$-parts of~$C$.
  Let~$D_{ab},D_{ac},D_{bc}$ be the~$abc$-parts of~$D$.
  Assume without loss of generality that~$F'$ is internally disjoint from~$C$.
  Observe that both~$(C-C_{bc}) \cdot F'$ and~$C_{bc} \cdot F'$ are cycles.
  We conclude that~$|C_{bc}|=|C_{ab}|+ |C_{ac}|= |F'| = |F''|=L/2$.
  As~$(C-C_{bc}) \cdot D_{bc}$ and~$(D-D_{bc}) \cdot C_{bc}$ are cycles,
 ~$|C_{bc}| =|D_{bc}| =L/2$. A similar analysis with~$H$ instead of~$F$ shows us that 
 ~$|C_{ab}| =|D_{ab}| =L/2$.
  This implies that~$|C_{ac}|=0$, a contradiction (Figure~\ref{fig:lct=1-partial-3-trees}(c)).  
  \end{case}
This concludes the proof of the theorem.
\end{proof}

\begin{figure}[H]
\centering
\scalebox{.8}{
\subfigure[ ]{
 \begin{tikzpicture}
    \node at (3,2) {$V_t$};
    \node at (6,2) {$V_{t'}$};
    \node at (4.5,1)[myblue,label=above :$a$,fill=myblue, circle](a) {};
    \node at (4.5,0)[myblue,label=above :$b$,fill=myblue, circle](b) {};
    \node at (4.5,0.02) (b1) {};
     \node at (4.5,-1)[myblue,label=above :$c$,fill=myblue, circle](c) {};
      
    \draw (4,0) ellipse (1 and 2);
    \draw (5,0) ellipse (1 and 2);
    
    \draw [color=blue] [line width=1.0pt, style =dashed] (a) .. controls (7, 1) .. (b);
     \draw [color=blue] [line width=1.0pt, style =dashed] (a) .. controls (9, 1.5) .. (b);
    
    \node at (7.5,1.75) [color=blue]{$C$};
    
    \node at (2,-1) [color=mygreen]{$F'$};
    
    \node at (7,0.8)[myblue,label=below right:$x$,fill=myblue, circle](x) {};


    \draw [color=mygreen] [line width=1.0pt,densely dashdotdotted] (b) .. controls (1, -2) .. (c);
    \draw [color=mygreen] [line width=1.0pt,densely dashdotdotted] (x) .. controls (6, -0.5) .. (c);
    
    \end{tikzpicture}
}}
\scalebox{.8}{
\subfigure[ ]{
 \begin{tikzpicture}
    \node at (3,2) {$V_t$};
    \node at (6,2) {$V_{t'}$};
    \node at (4.5,1)[myblue,label=above :$a$,fill=myblue, circle](a) {};
    \node at (4.5,0)[myblue,label=above :$b$,fill=myblue, circle](b) {};
    \node at (4.5,0.02) (b1) {};
     \node at (4.5,-1)[myblue,label=above :$c$,fill=myblue, circle](c) {};
      
    \draw (4,0) ellipse (1 and 2);
    \draw (5,0) ellipse (1 and 2);
    
    \draw [color=blue] [line width=1.0pt, style =dashed] (a) .. controls (7, 1) .. (b);
     \draw [color=blue] [line width=1.0pt, style =dashed] (a) .. controls (9, 1.5) .. (b);
    
    \node at (7.5,1.75) [color=blue]{$C$};
    
    \draw [color=red] [line width=1.0pt] (a) .. controls (1.5, 0.15) .. (b);
    \draw [color=red] [line width=1.0pt] (b) .. controls (1.5, -0.15) .. (c);
    \draw [color=red][line width=1.0pt] (a) .. controls (0,0) .. (c);
    \node at (1.5,0.75) [color=red]{$D$};
    \node at (7,-1) [color=mygreen]{$F'$};
    


    \draw [color=mygreen] [line width=1.0pt,densely dashdotdotted] (b) .. controls (8, -2) .. (c);
    
    \end{tikzpicture}
}}
\scalebox{.8}{
\subfigure[ ]{
\begin{tikzpicture}
    \node at (3,2) {$V_t$};
    \node at (6,2) {$V_{t'}$};
    \node at (4.5,1)[myblue,label=above :$a$,fill=myblue, circle](a) {};
    \node at (4.5,0)[myblue,label=above :$b$,fill=myblue, circle](b) {};
    \node at (4.5,0.02) (b1) {};
     \node at (4.5,-1)[myblue,label=above :$c$,fill=myblue, circle](c) {};
      
    \draw (4,0) ellipse (1 and 2);
    \draw (5,0) ellipse (1 and 2);
    
    \draw [color=blue] [line width=1.0pt,dashed] (a) .. controls (7.5, 0.15) .. (b);
    \draw [color=blue] [line width=1.0pt,dashed] (b) .. controls (7.5, -0.15) .. (c);
    \draw [color=blue,dashed][line width=1.0pt] (a) .. controls (9,0) .. (c);
    \node at (7.5,0.75) [color=blue]{$C$};

    
    \draw [color=red] [line width=1.0pt] (a) .. controls (1.5, 0.15) .. (b);
    \draw [color=red] [line width=1.0pt] (b) .. controls (1.5, -0.15) .. (c);
    \draw [color=red][line width=1.0pt] (a) .. controls (0,0) .. (c);
    \node at (1.5,0.75) [color=red]{$D$};



    \draw [color=mygreen] [line width=1.0pt,densely dashdotdotted] (b) .. controls (1, -2) .. (c);
    \node at (2,-1) [color=mygreen]{$F'$};
    
   \draw [color=brown] [line width=1.0pt,densely dashdotdotted] (a) .. controls (7.5, 2) .. (b);
    \node at (7.2,1.5) [color=brown]{$H'$};
    \end{tikzpicture}
    }}
\caption{Cycles~$C,D,F$ and~$H$ as in the proof of
Theorem~\ref{theorem:lct-tw3-lct=1}.
(a)~$C$ {2-intersects}~$V_t \cap V_{t'}$ and~$F'$ is internally disjoint from~$C$.
(b)~$C$ {2-intersects}~$V_t \cap V_{t'}$ and~$F'$ is internally disjoint from~$D$.
(c)~$C$ {3-intersects}~$V_t \cap V_{t'}$.}
\label{fig:lct=1-partial-3-trees}
\end{figure}
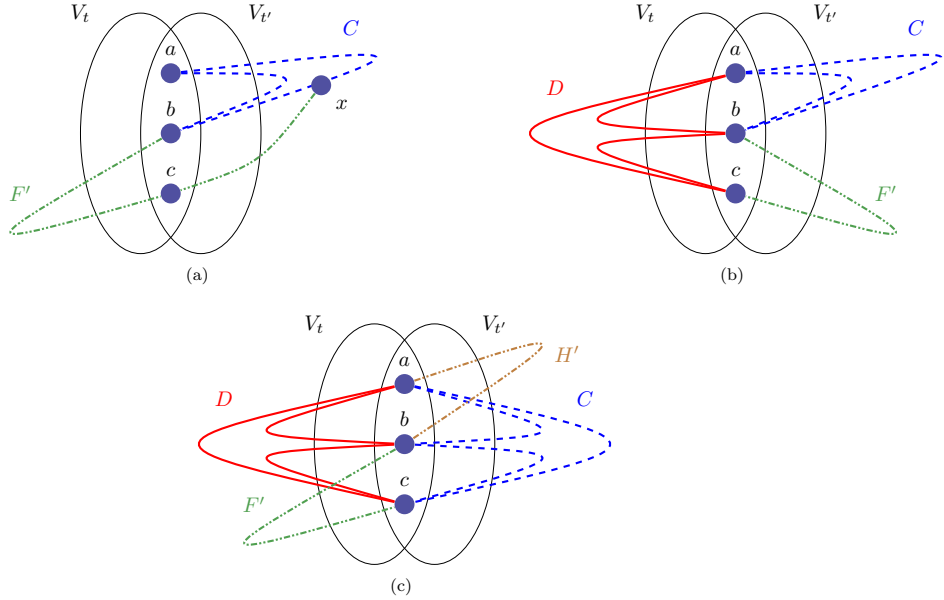

\section{Proof of the main lemma}
\label{section:mainlemma}
For all lemmas in this subsection,
we fix a graph~$G$ with~$\tw(G)=3$ and~${\lct(G)>1}$,
a full tree decomposition~$(T,\calV)$ of~$G$, and
a node~$t \in V(T)$.
We let~$\twocross$ be the set of all longest cycles in~$G$
that 2-cross~$V_t$, and let~$\calF$ be the set of
all longest cycles in~$G$ that are fenced by~$V_t$
and intersect~$V_t$ at most three times.
Therefore, proving Lemma~\ref{lemma:lct-tw3-lct=1-or-fenced}
reduces to prove that~$\calF \neq \emptyset$.

\begin{proof}[Proof of Lemma~\ref{lemma:lct-tw3-lct=1-or-fenced}]
Suppose that~$\twocross \neq \emptyset$.
If there are two non~$V_t$-equivalent longest cycles in~$\twocross$,
then we are done by Lemma~\ref{lemma:lct-tw3-X2-2-nonequivalent-then-F}.
If all cycles in~$\twocross$ are~$V_t$-equivalent, then we are done
by Lemma~\ref{lemma:lct-tw3-X2-all-equivalent-then-F}.
Hence~$\twocross = \emptyset$, and we
are done by Lemma~\ref{lemma:lct-tw3-X2-empty-then-F}.
\end{proof}

Next we prove Lemmas~\ref{lemma:lct-tw3-X2-2-nonequivalent-then-F},~\ref{lemma:lct-tw3-X2-all-equivalent-then-F} and~\ref{lemma:lct-tw3-X2-empty-then-F}.
We depend on an auxiliary
result (Corollary~\ref{corollary:lct-tw3-lct>1-implies-not-jump})
presented and proved ahead, in Section~\ref{section:auxiliarylemmas}.

\begin{lemma}
\label{lemma:lct-tw3-X2-2-nonequivalent-then-F}
If there are two non~$V_t$-equivalent longest cycles in~$\twocross$,
then~${\calF \neq \emptyset}$.
\end{lemma}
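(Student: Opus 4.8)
The plan is to work with two non-$V_t$-equivalent longest cycles $C_1, C_2 \in \twocross$ and show that, after a suitable exchange argument along the separator $V_t$, one obtains a longest cycle that is fenced by $V_t$ — contradicting nothing, but rather producing the desired element of $\calF$. First I would fix notation: write $V_t = \{a,b,c,u\}$, and recall that since each $C_i$ $2$-crosses $V_t$, it intersects $V_t$ in exactly two vertices and its two $V_t$-parts lie in distinct branches of $T$ at $t$. Because $C_1$ and $C_2$ are not $V_t$-equivalent, the pairs $V(C_1) \cap V_t$ and $V(C_2)\cap V_t$ differ; however, by Proposition~\ref{prop:two-longest-cycles-intersect} applied to the longest cycles $C_1, C_2$, and the fact that outside of $V_t$ they live in branches, we must have $|V(C_1) \cap V(C_2)| \ge 2$ with $V(C_1)\cap V(C_2) \subseteq V_t$; hence the two $2$-element subsets of $V_t$ share at least $\ldots$ actually they must be equal, a contradiction — so I need to be more careful. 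The correct route is: the intersection being forced into $V_t$ means the two pairs overlap in at least two vertices only if they are equal, so in fact $C_1$ and $C_2$ cannot both $2$-cross $V_t$ at different pairs while having their parts in disjoint branches — unless one of the two $V_t$-parts of $C_1$ coincides (as a vertex set meeting $V_t$) with a part of $C_2$. This is where the real case analysis begins.

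The key steps, in order, are: (1) Set up the branch structure — let $B_1, B_2$ be the two branches of $T$ at $t$ containing the two $V_t$-parts $P_1, P_1'$ of $C_1$, and similarly $B_1'', B_2''$ for $C_2$'s parts $P_2, P_2'$; use Proposition~\ref{prop:core-sep-tt'} to argue that each part is genuinely confined to its branch, so cross-part intersections happen only inside $V_t$. (2) Deduce from non-equivalence that the crossing pairs differ, and from $|V(C_1)\cap V(C_2)|\ge 2$ that (after relabeling) they share exactly one vertex — say both contain $b$ — and each contains a second vertex, $C_1$ at $a$ and $C_2$ at $c$, with $a,b,c$ pairwise distinct. (3) Perform the swap: among the four $V_t$-parts $P_1,P_1',P_2,P_2'$, find one from $C_1$ and one from $C_2$ that share the vertex $b$ and lie in the same branch (or can be concatenated through $b$ without revisiting a branch), and splice them to obtain a new cycle $C^\ast$; by maximality of $L$ and the usual length-balancing, $C^\ast$ is again a longest cycle. (4) Show that this $C^\ast$ — or a further descendant of it obtained by one more exchange — is fenced by $V_t$, i.e.\ all of its vertices outside $V_t$ lie in a single branch, and it meets $V_t$ at most three times; this puts it in $\calF$. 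The auxiliary result Corollary~\ref{corollary:lct-tw3-lct>1-implies-not-jump} should be invoked here to rule out the obstructing configuration where the spliced cycle would ``jump'' the separator.

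I expect the main obstacle to be step (4): after splicing two $V_t$-parts from different cycles, the resulting longest cycle may itself $2$-cross $V_t$ (just at a different pair), so a single exchange need not terminate. The fix is to argue by a minimality/extremality choice — e.g.\ among all longest cycles that $2$-cross $V_t$, pick one whose crossing pair, or whose pair of branches, is extremal in a suitable well-founded order (say, minimizing the size of the union of the two branches used, or choosing a branch that is a leaf-most in $T$), and then show the splice strictly decreases this measure while staying a longest cycle, forcing eventual fencing. Getting the monotonicity of this measure to hold in every sub-case of step (2) — in particular handling the degenerate situations where a $V_t$-part has length $1$ (a single edge inside $V_t$) or where $u$, the fourth vertex of $V_t$, is or is not used — is the delicate part, and is presumably where Corollary~\ref{corollary:lct-tw3-lct>1-implies-not-jump} and the hypothesis $L \ge 5$ do the heavy lifting.
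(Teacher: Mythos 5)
Your proposal has a genuine gap, and it begins at your step (2). You assert $V(C_1)\cap V(C_2)\subseteq V_t$, notice that this would force the two crossing pairs to coincide (contradicting non-equivalence), and then attempt a repair; but the repaired configuration — the two pairs sharing exactly one vertex of $V_t$, say both crossing pairs contain $a$ — is \emph{not} self-contradictory, since the cycles can meet a second time inside a common component of $G-V_t$. Consequently no exchange between $C_1$ and $C_2$ alone can terminate the argument. The missing idea is a \emph{third} longest cycle: the paper invokes the standing hypothesis $\lct(G)>1$ to obtain a longest cycle $F$ avoiding the shared vertex $a$; either $F$ is fenced by $V_t$ (and then $\calF\neq\emptyset$ immediately), or $F$ crosses $V_t$ at $\{b,c\}$ or at $\{b,c,d\}$, and it is the three-way splicings among $C$, $D$ and $F$ — together with, in one subcase, two internally disjoint paths from $d$ to $\{a,b,c\}$ supplied by $2$-connectedness — that produce a cycle longer than $L$ in every remaining case. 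Your proposal never introduces this third cycle, so the engine of the proof is absent.

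Your step (3) is also not well-defined: two $V_t$-parts taken from $C_1$ and $C_2$ that share only the single vertex $b$ concatenate to a path from $a$ to $c$, not a cycle; closing it requires a third arc, which is exactly where $F$ or the $2$-connectivity paths must enter. Finally, the well-founded induction of step (4) — repeatedly splicing while a branch-based measure decreases — is unsubstantiated (you give no reason the measure decreases) and is not what the paper does: the paper's proof of this lemma is a finite case analysis on how $F$ meets $V_t$ and on which of $C,D,F$ is $t$-inside $abc$, ending in each branch with a cycle longer than $L$. It uses Proposition~\ref{prop:core-sep-tt'} and $2$-connectivity but, contrary to your plan, makes no use of Corollary~\ref{corollary:lct-tw3-lct>1-implies-not-jump}, which serves the other two lemmas of that section.
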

\begin{proof}
Let~$V_t=\{a,b,c,d\}$.
Suppose by contradiction that~${C,D \in \twocross}$,
${V(C) \cap V_t =\{a,b\}}$ and~${V(D) \cap V_t \neq \{a,b\}}$.
Let~$C'$ and~$C''$
be the corresponding~$ab$-parts of~$C$.
If~$D$ 2-crosses~$V_t$ at~$\{c,d\}$,
then~$V(C) \cap V(D) =\emptyset$, a contradiction.
Hence, we may assume that~$D$ 2-crosses~$V_t$ at~$\{a,c\}$.
Let~$D'$ and~$D''$
be the corresponding~$ac$-parts of~$D$.
As both~$C$ and~$D$ cross~$V_t$, 
we may assume, without loss of generality, that
$C'$ is internally disjoint from~$D'$ and that
$C''$ is internally disjoint from~$D''$.
As~$\lct(G)>1$, there exists a longest cycle~$F$
that does not contain~$a$.
If~$F$ is fenced by~$V_t$, we are done, so let us assume
that~$F$ crosses~$V_t$.

First consider the case in which~$F$ {2-intersects}~$V_t$.
As $F$ intersects both $C$ and $D$,
we have that $F$ {2-intersects}~$V_t$ at~$\{b,c\}$.
Let~$F'$ and~$F''$ be the corresponding~$bc$-parts of~$F$.
Suppose that~$C$, $D$ and~$F$ are~$t$-inside~$abc$.
As~$G$ is 2-connected, 
there exist two internally disjoint
paths starting at~$d$ and ending at distinct vertices in
$\{a,b,c\}$.
Let~$P$ and~$Q$ be these two paths, and suppose
without loss of generality that~$b$ is an extreme of~$P$ 
and that~$c$ is an extreme of~$Q$.
By Proposition \ref{prop:core-sep-tt'},
both~$P$ and~$Q$ are internally disjoint
from~$C$ and~$D$.
But then~${P \cdot Q \cdot D' \cdot C'}$ and
${P \cdot Q \cdot D'' \cdot C''}$ are both cycles, 
at least one of them longer than~$L$,
a contradiction (Figure~\ref{fig:lct-2-crosses-are-equivalent-1}(a)).
Hence, at least one of~$\{C,D,F\}$, say $C$, is
not~$t$-inside~$abc$.
We may assume that~$C''$ is internally disjoint from both~$D$ and~$F$.
As~$D$ and~$F$ 2-cross~$V_t$,
we may assume that~$D'$ is internally disjoint from~$F'$
and that~$D''$ is internally disjoint from~$F''$.
But then~${C'' \cdot D' \cdot F'}$ and
${C'' \cdot D'' \cdot F''}$ are both cycles, 
at least one of them longer than~$L$,
a contradiction
(Figure~\ref{fig:lct-2-crosses-are-equivalent-1}(b)).

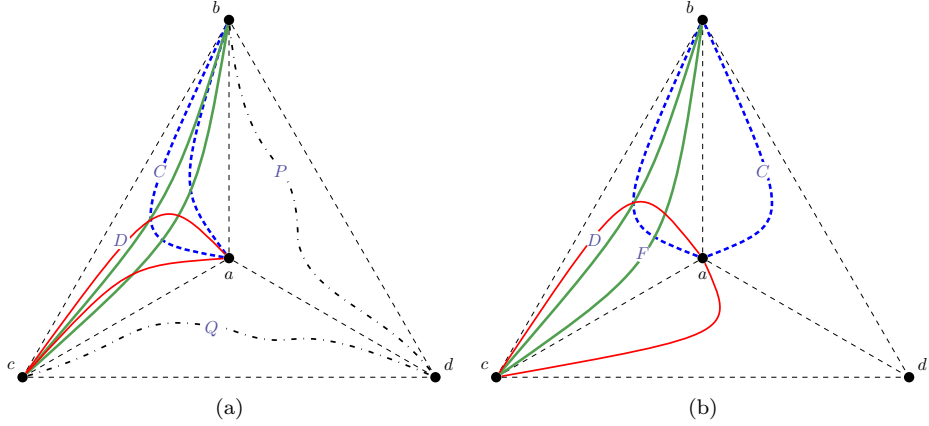
\begin{figure}[H]
\centering
\subfigure[ ]{
\resizebox{.5\textwidth}{!}{\begin{tikzpicture}
[scale=0.7, label distance=3pt, every node/.style={fill,circle,inner sep=0pt,minimum size=6pt}]
 
    \node at (0,0)[myblue,label=above left :$c$,fill=black, circle](c) {};
    \node at (12,0)[myblue,label=above right:$d$,fill=black, circle](d) {};
    \node at (6,10.3923)[myblue,label=above left:$b$,fill=black, circle](b) {};
    \node at (6,3.4641)[myblue,label=below:$a$,fill=black, circle](a) {};
    
    \draw [dashed] (a) -- (b);
    \draw [dashed] (b) -- (c);
    \draw [dashed] (a) -- (c);
    \draw [dashed] (a) -- (d);
    \draw [dashed] (b) -- (d);
    \draw [dashed] (c) -- (d);
    
    \draw [densely dashed] [color=blue] [line width=1.5pt] (a) .. controls (3, 4) .. (b);
     \draw [densely dashed][color=blue] [line width=1.5pt] (a) .. controls (4.5, 5.5) .. (b);
     
    \node at (4,6)[myblue,label=below:$ $,fill=white, circle](C) {$C$};
    
     \draw [color=mygreen] [line width=1.5pt] (b) .. controls (5, 4.5) .. (c);
     \draw [color=mygreen] [line width=1.5pt] (b) .. controls (4.3, 5) .. (c);
    
     \draw [color=red] [line width=1pt] (a) .. controls (4, 5.5) .. (c);
     \draw [color=red] [line width=1pt] (a) .. controls (3, 3.25) .. (c);
    
    \node at (2.85,4)[myblue,label=below:$ $,fill=white, circle](D) {$D$};
    
     \draw [loosely dashdotted][color=black] [line width=1pt] (b) .. controls (6.5, 5.5) and (8,7) .. (8,4) .. controls(8.3,2.3) and (10.5,2) ..(d);
    
     \draw [loosely dashdotted][color=black] [line width=1pt] (c) .. controls (3.5, 1) and (3,2) .. (6,1.4) .. controls(8.3,0.5) and (7.3,2) ..(d);
     
    \node at (7.5,6)[myblue,label=below:$ $,fill=white, circle](C) {$P$};
    
    \node at (5.5,1.4)[myblue,label=below:$ $,fill=white, circle](C) {$Q$};

  \end{tikzpicture}}
}
\subfigure[ ]{
\resizebox{.5\textwidth}{!}{\begin{tikzpicture}
[scale=0.7, label distance=3pt, every node/.style={fill,circle,inner sep=0pt,minimum size=6pt}]
 
    \node at (0,0)[myblue,label=above left :$c$,fill=black, circle](c) {};
    \node at (12,0)[myblue,label=above right:$d$,fill=black, circle](d) {};
    \node at (6,10.3923)[myblue,label=above left:$b$,fill=black, circle](b) {};
    \node at (6,3.4641)[myblue,label=below:$a$,fill=black, circle](a) {};
    
    \draw [dashed] (a) -- (b);
    \draw [dashed] (b) -- (c);
    \draw [dashed] (a) -- (c);
    \draw [dashed] (a) -- (d);
    \draw [dashed] (b) -- (d);
    \draw [dashed] (c) -- (d);
    
    \draw [densely dashed] [color=blue] [line width=1.5pt] (a) .. controls (3.35, 4.5) .. (b);
     \draw [densely dashed][color=blue] [line width=1.5pt] (a) .. controls (8.65, 4.5) .. (b);
     
    \node at (7.75,6)[myblue,label=below:$ $,fill=white, circle](C) {$C$};
    
     \draw [color=mygreen] [line width=1.5pt] (b) .. controls (5, 3.5) .. (c);
     \draw [color=mygreen] [line width=1.5pt] (b) .. controls (4.3, 5) .. (c);
    
     \draw [color=red] [line width=1pt] (a) .. controls (4, 6) .. (c);
     \draw [color=red] [line width=1pt] (a) .. controls (7, 1.25) .. (c);
    
    \node at (2.85,4)[myblue,label=below:$ $,fill=white, circle](D) {$D$};
    
    \node at (4.25,3.6)[myblue,label=below:$ $,fill=white, circle](F) {$F$};

    
     
    
  \end{tikzpicture}}
}
\caption{$(a)$~$C,D$ and~$F$ are~$t$-inside~$abc$.
$(b)$ At least one of~$\{C,D,F\}$ is not~$t$-inside~$abc$.
}
\label{fig:lct-2-crosses-are-equivalent-1}
\end{figure}

Now suppose that~$F$ {3-intersects}~$V_t$ (at~$\{b,c,d\}$).
If both~$C$ and~$D$ are~$t$-inside~$abc$, then we can proceed
as in the previous paragraph with~$F_{bd}$ and~$F_{cd}$ instead
of~$P$ and~$Q$ respectively.
If both~$C$ and~$D$ are~$t$-outside~$abc$,
then
~$V(C) \cap V(D)=\{a\}$, a contradiction.
Hence, at least one of~$\{C,D\}$~$t$-jumps~$abc$.
Without loss of generality, suppose that~$C$~$t$-jumps~$abc$
and that~$\Branch_t(C'') \subseteq \cB_{t}(abd)$.
Let~$R$ be the subpath of~$F_{bd}$ that starts at~$d$,
ends at a vertex of~$C''$ and is internally disjoint from~$C''$.
Suppose that the other extreme of~$R$ is~$x$.
First consider the case in which~$D$ also~$t$-jumps~$abc$,
and without loss of generality suppose that  
$\Branch_t(D') \subseteq \cB_{t}(acd)$.
Let~$S$ be the subpath of~$F_{cd}$ that starts at~$d$,
ends at~$D'$ and is internally disjoint from~$D'$.
Suppose that the other extreme of~$S$ is~$y$.
Let~$C''_{ax}$ and~$C''_{bx}$ be the two~$x$-tails of~$C''$.
Let~$D'_{ay}$ and~$D'_{cy}$ be the two~$y$-tails of~$D'$.
Then~$C'\cdot C''_{bx} \cdot  R \cdot S \cdot D'_{ya}$
and ~$D''\cdot D'_{cy} \cdot S \cdot R \cdot C''_{xa}$
are cycles, at least one of them longer than~$L$, a contradiction
(Figure~\ref{fig:lct-2-crosses-are-equivalent-2}(a)).
Finally consider the case in which~$D$ is~$t$-inside~$abc$.
Then~$C'\cdot C''_{bx} \cdot  R \cdot F_{dc} \cdot D'$
and~$D''\cdot C''_{ax} \cdot R \cdot F_{dc}$
are cycles, at least one of them longer than~$L$, a contradiction
(Figure~\ref{fig:lct-2-crosses-are-equivalent-2}(b)).
\end{proof}

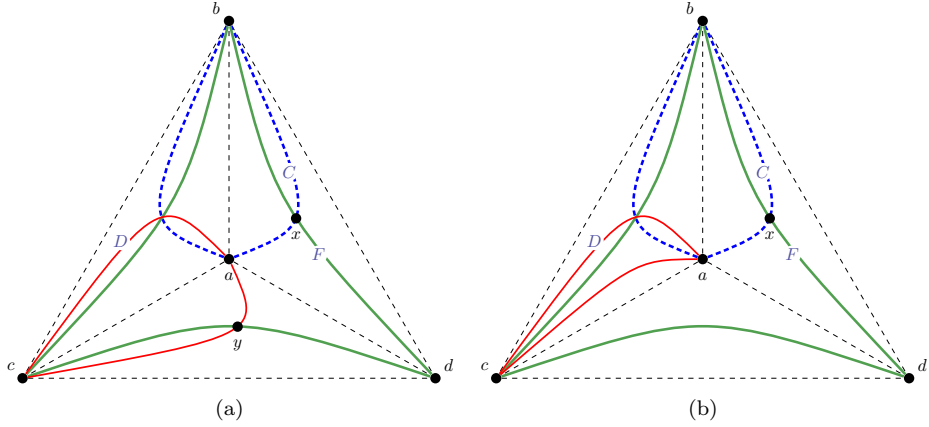
\begin{figure}[H]
\centering
\subfigure[ ]{
\resizebox{.5\textwidth}{!}{  
  \begin{tikzpicture}
[scale=0.7, label distance=3pt, every node/.style={fill,circle,inner sep=0pt,minimum size=6pt}]
 
    \node at (0,0)[myblue,label=above left :$c$,fill=black, circle](c) {};
    \node at (12,0)[myblue,label=above right:$d$,fill=black, circle](d) {};
    \node at (6,10.3923)[myblue,label=above left:$b$,fill=black, circle](b) {};
    \node at (6,3.4641)[myblue,label=below:$a$,fill=black, circle](a) {};
    
    \draw [dashed] (a) -- (b);
    \draw [dashed] (b) -- (c);
    \draw [dashed] (a) -- (c);
    \draw [dashed] (a) -- (d);
    \draw [dashed] (b) -- (d);
    \draw [dashed] (c) -- (d);
    
    \draw [densely dashed] [color=blue] [line width=1.5pt] (a) .. controls (3.35, 4.5) .. (b);
     \draw [densely dashed][color=blue] [line width=1.5pt] (a) .. controls (8.65, 4.5) .. (b);
     
    \node at (7.75,6)[myblue,label=below:$ $,fill=white, circle](C) {$C$};
    
     \draw [color=mygreen] [line width=1.5pt] (b) .. controls (4.7, 5) .. (c);
     \draw [color=mygreen] [line width=1.5pt] (b) .. controls (7.3, 5) .. (d);
     \draw [color=mygreen] [line width=1.5pt] (c) .. controls (6, 2) .. (d);

     \draw [color=red] [line width=1pt] (a) .. controls (4, 5.45) .. (c);
     \draw [color=red] [line width=1pt] (a) .. controls (7, 1.25) .. (c);
    
    \node at (2.85,4)[myblue,label=below:$ $,fill=white, circle](D) {$D$};
    
    \node at (8.6,3.6)[myblue,label=below:$ $,fill=white, circle](F) {$F$};
    
      \node at (7.95,4.65)[myblue,label=below:$x$,fill=black, circle](x) {};
      \node at (6.25,1.5)[myblue,label=below:$y$,fill=black, circle](y) {};

    
     
    
  \end{tikzpicture}}
}
\subfigure[ ]{
\resizebox{.5\textwidth}{!}{ 
\begin{tikzpicture}
[scale=0.7, label distance=3pt, every node/.style={fill,circle,inner sep=0pt,minimum size=6pt}]
 
    \node at (0,0)[myblue,label=above left :$c$,fill=black, circle](c) {};
    \node at (12,0)[myblue,label=above right:$d$,fill=black, circle](d) {};
    \node at (6,10.3923)[myblue,label=above left:$b$,fill=black, circle](b) {};
    \node at (6,3.4641)[myblue,label=below:$a$,fill=black, circle](a) {};
    
    \draw [dashed] (a) -- (b);
    \draw [dashed] (b) -- (c);
    \draw [dashed] (a) -- (c);
    \draw [dashed] (a) -- (d);
    \draw [dashed] (b) -- (d);
    \draw [dashed] (c) -- (d);
    
    \draw [densely dashed] [color=blue] [line width=1.5pt] (a) .. controls (3.35, 4.5) .. (b);
     \draw [densely dashed][color=blue] [line width=1.5pt] (a) .. controls (8.65, 4.5) .. (b);
     
    \node at (7.75,6)[myblue,label=below:$ $,fill=white, circle](C) {$C$};
    
     \draw [color=mygreen] [line width=1.5pt] (b) .. controls (4.7, 5) .. (c);
     \draw [color=mygreen] [line width=1.5pt] (b) .. controls (7.3, 5) .. (d);
     \draw [color=mygreen] [line width=1.5pt] (c) .. controls (6, 2) .. (d);

     \draw [color=red] [line width=1pt] (a) .. controls (4, 5.45) .. (c);
     \draw [color=red] [line width=1pt] (a) .. controls (4, 3.45) .. (c);
    
    \node at (2.85,4)[myblue,label=below:$ $,fill=white, circle](D) {$D$};
    
    \node at (8.6,3.6)[myblue,label=below:$ $,fill=white, circle](F) {$F$};
    
      \node at (7.95,4.65)[myblue,label=below:$x$,fill=black, circle](x) {};
    
    
     
    
  \end{tikzpicture}}
}
\caption{Cases in the proof of Lemma~\ref{lemma:lct-tw3-X2-2-nonequivalent-then-F}.
$F$ {3-intersects} $V_t$.
$(a)$~$D$~$t$-jumps~$abc$.
$(b)$~$D$ is~$t$-inside~$abc$.
}
\label{fig:lct-2-crosses-are-equivalent-2}
\end{figure}

\begin{lemma}
\label{lemma:lct-tw3-X2-all-equivalent-then-F}
  If~$\twocross \neq \emptyset$
  and all cycles in~$\twocross$ are~$V_t$-equivalent,
  then~$\calF \neq \emptyset$.
\end{lemma}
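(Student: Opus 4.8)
The plan is to assume $\calF=\emptyset$ and derive a contradiction, ultimately by exhibiting a cycle longer than $L(G)$ (recall $L(G)\ge 5$). After relabelling the vertices of $V_t=\{a,b,c,d\}$, I may assume that every cycle of $\twocross$ $2$-crosses $V_t$ at $\{a,b\}$. The first step is to record what $\calF=\emptyset$ forces on an arbitrary longest cycle $D$: if $D$ intersects $V_t$ at most three times it cannot be fenced by $V_t$ (otherwise $D\in\calF$), so it crosses $V_t$; if moreover it $2$-crosses $V_t$ it lies in $\twocross$, hence at $\{a,b\}$; and a longest cycle meeting $V_t$ at most once is automatically fenced, so no such cycle exists. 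Consequently every longest cycle of $G$ is of one of three types: it $2$-crosses $V_t$ at $\{a,b\}$ (hence contains $a,b$ and avoids $c,d$), it $3$-crosses $V_t$ (at some triple), or it contains all of $V_t$. Since $\lct(G)>1$ there is a longest cycle $F$ missing $a$ and a longest cycle $H$ missing $b$, and by the trichotomy $F$ $3$-crosses $V_t$ at $\{b,c,d\}$ while $H$ $3$-crosses $V_t$ at $\{a,c,d\}$. I also fix a cycle $C\in\twocross$ with $ab$-parts $C',C''$; each of these has length at least $2$ (replacing $C'$ by the edge $ab$ would make $C$ fenced by $V_t$), so each has its interior inside a branch of $T$ at $t$; by Corollary~\ref{corollary:lct-tw3-lct>1-implies-not-jump}, $C$ does not $t$-jump $abc$ or $abd$, which I use to control these two branches $B'$ and $B''$ --- the main case being $B'\neq B''$.

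The second step records two structural facts I would use throughout. \emph{Exchange:} write $\mu(B)$ for the maximum length of an $ab$-path whose interior lies in a single branch $B$ at $t$. When $B'\neq B''$, any $ab$-path with interior in $B'$ is internally disjoint from $C''$ and vice versa, while any $ab$-path with interior in a third branch, together with $C'$, is a cycle; hence $\mu(B')=|C'|$, $\mu(B'')=|C''|$, $|C'|+|C''|=L(G)$, and $\mu(B^{\ast})\le\min\{|C'|,|C''|\}$ for every other branch $B^{\ast}$. \emph{Branch families:} in a full tree decomposition the branches at $t$ split into the four pairwise disjoint families $\cB_t(abc),\cB_t(abd),\cB_t(acd),\cB_t(bcd)$ according to which triple equals $V_t\cap V_{t'}$; and the branch carrying the interior of an arc of $F$ or $H$ with endpoints $x,y\in V_t$ belongs to one of the two families whose label contains both $x$ and $y$ (since $x$ and $y$ must lie in the neighbouring bag). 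In particular $B',B''\in\cB_t(abc)\cup\cB_t(abd)$, whereas the $cd$-arc of $F$ and of $H$ lies in a branch disjoint from both $B'$ and $B''$.

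The heart of the proof is surgery between $C$ and $F$, and symmetrically between $C$ and $H$. Because $C$ and $F$ share at least two vertices but only $b$ inside $V_t$, the cycle $F$ reaches a vertex $y\notin V_t$ of $C$, say on $C'$; taking $y$ closest to $a$ along $C'$, the sub-path of $C'$ from $a$ to $y$ is internally disjoint from $F$ and lies in $B'$. Following $F$ away from $y$ in both directions to the first vertices $z_1,z_2\in\{b,c,d\}$ it meets produces two arcs of $F$ with interior in $B'$, and I would splice the $a$-to-$y$ sub-path, these two arcs, the remaining arc(s) of $F$, and the appropriate $ab$-part of $C$ into a closed walk through some of $a,b,c,d$. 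Using the branch-family fact to certify which arcs are internally disjoint and the exchange fact to compare lengths, I would check that in every configuration this walk is a genuine cycle that is either longer than $L(G)$ --- impossible; or a longest cycle fenced by $V_t$ and meeting it at most three times --- impossible since $\calF=\emptyset$; or a longest cycle $2$-crossing $V_t$ at a pair other than $\{a,b\}$ --- impossible since the cycles of $\twocross$ are $V_t$-equivalent. When a rerouting must run through $d\notin V(C)$, the $2$-connectedness of $G$ together with Proposition~\ref{prop:core-sep-tt'} supplies the internally disjoint detour, exactly as in the proof of Lemma~\ref{lemma:lct-tw3-X2-2-nonequivalent-then-F}. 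I expect the main obstacle to be precisely this case analysis: a $3$-crossing cycle has three arcs spread over up to three branches, the sub-cases in which one of those arcs meets the interior of $C'$ or $C''$ (so that a plain concatenation of arcs is not a cycle) require finer surgery, and the boundary case $B'=B''$ must be disposed of separately; ensuring that one of the three contradictions above is reached in every configuration is the real work.
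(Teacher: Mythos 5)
Your setup (reduce to $V(C)\cap V_t=\{a,b\}$ for all of $\twocross$, extract a longest cycle missing $a$ and one missing $b$, observe that $\calF=\emptyset$ forces each of them to $3$-cross $V_t$ at $\{b,c,d\}$ and $\{a,c,d\}$ respectively) matches the paper's opening. But there are two genuine problems after that.

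First, the assertion ``by Corollary~\ref{corollary:lct-tw3-lct>1-implies-not-jump}, $C$ does not $t$-jump $abc$ or $abd$'' is both a misuse of that corollary and the opposite of what is true. The corollary only asserts the \emph{existence} of some longest cycle that does not jump $\Delta$, and only under the hypothesis that for every pair $\{i,j\}\subseteq\Delta$ there is a longest cycle $2$-jumping $\Delta$ at $\{i,j\}$ --- a hypothesis you have not verified at that point; it says nothing about the particular cycle $C$ you fixed. In fact the paper's Claim~\ref{remark:lct-tw3-remark-C-jumps-abc-abd} proves that every cycle $2$-crossing $V_t$ at $\{a,b\}$ \emph{does} jump both $abc$ and $abd$ (otherwise one could join the two $3$-crossing cycles through an $ab$-part of $C$), and Claim~\ref{remark:lct-tw3-remark-1} extends this to the two $3$-crossing cycles. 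This jumping structure is not an obstacle to be avoided; it is precisely what makes the corollary applicable later.

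Second, and more seriously, your endgame --- splicing arcs of $C$ with arcs of the two $3$-crossing cycles and checking that every configuration yields a too-long cycle, a fenced cycle, or a non-equivalent $2$-crossing cycle --- does not terminate. In the configuration the paper isolates (its Figure~\ref{fig:lct-remark-1}: $C'$, $F_{ac}$, $D_{bc}$ inside $abc$; $C''$, $F_{ad}$, $D_{bd}$ inside $abd$; the two $cd$-arcs elsewhere), the natural splices among these three cycles produce cycles whose lengths can all equal $L$ and which are either equivalent to $C$ or $3$-crossing, so none of your three contradictions is reached. The paper escapes by feeding the established $2$-jumping structure into Corollary~\ref{corollary:lct-tw3-lct>1-implies-not-jump} to manufacture a \emph{new} longest cycle $H$ that does not jump $abc$, running a case analysis on how $H$ meets $V_t$, deducing $|F_{ad}|\ge L/2$, then repeating the whole argument with $abd$ in place of $abc$ to get a second new cycle $J$ and conclude $|F_{cd}|=0$. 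Your proposal never identifies the need for these additional cycles, and without them the case analysis you defer as ``the real work'' cannot close.
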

\begin{proof}
Let~$V_t=\{a,b,c,d\}$.
Let~$C \in \twocross$.
Assume without loss of generality that~${V(C) \cap V_t = \{a,b\}}$.
As~${\lct(G)>1}$, there exists a longest cycle~$D$ that does not contain~$a$,
and a longest cycle~$F$ that does not contain~$b$.
If either~$D$ or~$F$ is fenced by~$V_t$,~$\calF\neq \emptyset$.
So, as all cycles in~$\twocross$ are~$V_t$-equivalent,
both~$D$ and~$F$ 3-intersect~$V_t$.

Observe that~$C$ must-jump both~$abc$ and~$abd$.
Indeed, otherwise, we can join~$D$ and~$F$ with a~$ab$-part of~$C$. In fact, this is valid to any cycle equivalent to $C$, thus we have the following Claim.
\begin{claim}\label{remark:lct-tw3-remark-C-jumps-abc-abd}
Every cycle that 2-crosses~$V_t$ at~$\{a,b\}$ jumps both~$abc$ and~$abd$.
\end{claim}

Let~$C'$ and~$C''$ be the two~$ab$-parts of~$C$.
By Claim~\ref{remark:lct-tw3-remark-C-jumps-abc-abd},
we may assume that~$C'$ is~$t$-inside~$abc$ and
that~$C''$ is~$t$-inside~$abd$.

Moreover, 
we have the following Claim regarding cycles $D$ and $F$.

\begin{claim}\label{remark:lct-tw3-remark-1}
$D$ and~$F$ jump both~$abc$ and~$abd$.
\end{claim}
\begin{proof}
[Proof of Claim~\ref{remark:lct-tw3-remark-1}]
If~$D$ does not jump at least one of~$\{abc,abd\}$, then~$C$ and~$D$
only intersect at~$b$, a contradiction to the fact that~$G$ is 2-connected.
Hence~$D$ jumps at least one of~$\{abc,abd\}$.
Analogously,~$F$ jumps at least one of~$\{abc,abd\}$.
Suppose by contradiction that the claim is not true.
Then, one of~$\{D,F\}$, say~$D$,
jumps exactly one of~$\{abc,abd\}$, say~$abc$.
First suppose that~$F$ jumps both of~$\{abc,abd\}$.
Recall that~$C''$ is the~$ab$-part of~$C$ that is~$t$-inside~$abd$.
Let~$R$ be the subpath of~$C''$ that is internally disjoint from~$F$,
starts at~$b$ and ends at a vertex~$x$ of~$F$ (possibly~$x=a$).
Then~${F_{cd} \cdot F_{dx} \cdot R \cdot D_{bc}}$
and~${D_{cd} \cdot D_{db} \cdot R \cdot F_{xa} \cdot F_{ac}}$
are cycles, one of them longer than~$L$, a contradiction
(Figure~\ref{fig:2crosses-implies-fenced-1}(a)).

The rest of the proof follows by similar analysis to the previous case, we show the cases in
Figures~\ref{fig:2crosses-implies-fenced-1}(b)
to (d).
\end{proof}

\begin{figure}[H]
\centering

\subfigure[ ]{
\resizebox{.22\textwidth}{!}{  
  \begin{tikzpicture}
[scale=0.7, label distance=3pt, every node/.style={fill,circle,inner sep=0pt,minimum size=6pt}]
 
    \node at (0,0)[myblue,label=above left :$c$,fill=black, circle](c) {};
    \node at (12,0)[myblue,label=above right:$d$,fill=black, circle](d) {};
    \node at (6,10.3923)[myblue,label=above left:$b$,fill=black, circle](b) {};
    \node at (6,3.4641)[myblue,label=below:$a$,fill=black, circle](a) {};
    
    \draw [dashed] (a) -- (b);
    \draw [dashed] (b) -- (c);
    \draw [dashed] (a) -- (c);
    \draw [dashed] (a) -- (d);
    \draw [dashed] (b) -- (d);
    \draw [dashed] (c) -- (d);
    
    \draw [densely dashed] [color=blue] [line width=1.5pt] (a) .. controls (3.35, 4.5) .. (b);
     \draw [densely dashed][color=blue] [line width=1.5pt] (a) .. controls (8.65, 4.5) .. (b);
     
    \node at (7.75,6)[myblue,label=below:$ $,fill=white, circle](C) {$C''$};
    \node at (4.25,6)[myblue,label=below:$ $,fill=white, circle](C) {$C'$};
    
     \draw [color=red] [line width=1.5pt] (b) .. controls (4.7, 5) .. (c);
     \draw [color=red] [line width=1.5pt] (b) .. controls (10, 6) .. (d);

    \draw [color=mygreen] [line width=1pt] (a) .. controls (4, 5.45) .. (c);
     \draw [color=mygreen] [line width=1pt] (a) .. controls (8, 5.45) .. (d);
    
    \node at (2.85,4)[myblue,label=below:$ $,fill=white, circle](D) {$F$};
    
    \node at (10,5.4)[myblue,label=below:$ $,fill=white, circle](F) {$D$};
    
      \node at (7.95,4.65)[myblue,label=below:$x$,fill=black, circle](x) {};
   
     \draw [color=mygreen] [line width=1pt] (c) .. controls (4.5, 1.5) and (3,2.5) .. (7,1) .. controls(10.3,0) and (10.5,0.5) ..(d);
     
     \draw [color=red] [line width=1pt] (d) .. controls (7.5, 1.5) and (9,2.5) .. (5,1) .. controls(1.7,0) and (1.5,0.5) ..(c);

    
     
    
  \end{tikzpicture}}
}
\subfigure[ ]{
\resizebox{.22\textwidth}{!}{
  \begin{tikzpicture}
[scale=0.7, label distance=3pt, every node/.style={fill,circle,inner sep=0pt,minimum size=6pt}]
 
    \node at (0,0)[myblue,label=above left :$c$,fill=black, circle](c) {};
    \node at (12,0)[myblue,label=above right:$d$,fill=black, circle](d) {};
    \node at (6,10.3923)[myblue,label=above left:$b$,fill=black, circle](b) {};
    \node at (6,3.4641)[myblue,label=below:$a$,fill=black, circle](a) {};
    
    \draw [dashed] (a) -- (b);
    \draw [dashed] (b) -- (c);
    \draw [dashed] (a) -- (c);
    \draw [dashed] (a) -- (d);
    \draw [dashed] (b) -- (d);
    \draw [dashed] (c) -- (d);
    
    \draw [densely dashed] [color=blue] [line width=1.5pt] (a) .. controls (3.35, 4.5) .. (b);
     \draw [densely dashed][color=blue] [line width=1.5pt] (a) .. controls (8.65, 4.5) .. (b);
     
    \node at (7.75,6)[myblue,label=below:$ $,fill=white, circle](C) {$C''$};
    \node at (4.25,6)[myblue,label=below:$ $,fill=white, circle](C) {$C'$};
    
     \draw [color=red] [line width=1.5pt] (b) .. controls (4.7, 5) .. (c);
     \draw [color=red] [line width=1.5pt] (b) .. controls (10, 6) .. (d);
     \draw [color=mygreen] [line width=1pt] (c) .. controls (6, -1) .. (d);

     \draw [color=mygreen] [line width=1pt] (a) .. controls (8, 5.45) .. (d);
     \draw [color=mygreen] [line width=1pt] (a) .. controls (7, 1.25) .. (c);
    
    \node at (3.75,0.75)[myblue,label=below:$ $,fill=white, circle](D) {$F$};
    
    \node at (10,5.4)[myblue,label=below:$ $,fill=white, circle](F) {$D$};
    
   
     \draw [color=red] [line width=1pt] (c) .. controls (4.5, 1.5) and (3,2.5) .. (7,1) .. controls(10.3,0) and (10.5,0.5) ..(d);

    
     
    
  \end{tikzpicture}}
}
\subfigure[ ]{
\resizebox{.22\textwidth}{!}{  
  \begin{tikzpicture}
[scale=0.7, label distance=3pt, every node/.style={fill,circle,inner sep=0pt,minimum size=6pt}]
 
    \node at (0,0)[myblue,label=above left :$c$,fill=black, circle](c) {};
    \node at (12,0)[myblue,label=above right:$d$,fill=black, circle](d) {};
    \node at (6,10.3923)[myblue,label=above left:$b$,fill=black, circle](b) {};
    \node at (6,3.4641)[myblue,label=below:$a$,fill=black, circle](a) {};
    
    \draw [dashed] (a) -- (b);
    \draw [dashed] (b) -- (c);
    \draw [dashed] (a) -- (c);
    \draw [dashed] (a) -- (d);
    \draw [dashed] (b) -- (d);
    \draw [dashed] (c) -- (d);
    
    \draw [densely dashed] [color=blue] [line width=1.5pt] (a) .. controls (3.35, 4.5) .. (b);
     \draw [densely dashed][color=blue] [line width=1.5pt] (a) .. controls (8.65, 4.5) .. (b);
     
    \node at (7.75,6)[myblue,label=below:$ $,fill=white, circle](C) {$C''$};
    \node at (4.25,6)[myblue,label=below:$ $,fill=white, circle](C) {$C'$};
    
     \draw [color=red] [line width=1.5pt] (b) .. controls (4.7, 5) .. (c);
     \draw [color=red] [line width=1.5pt] (b) .. controls (10, 6) .. (d);

     \draw [color=mygreen] [line width=1pt] (a) .. controls (8, 5.45) .. (d);
     \draw [color=mygreen] [line width=1pt] (a) .. controls (4, 1.25) .. (c);

    \node at (10,5.4)[myblue,label=below:$ $,fill=white, circle](F) {$D$};
    
      \node at (4.05,4.65)[myblue,label=below:$x$,fill=black, circle](x) {};
   
     \draw [color=red] [line width=1pt] (c) .. controls (4.5, 1.5) and (3,2.5) .. (7,1) .. controls(10.3,0) and (10.5,0.5) ..(d);
     
     \draw [color=mygreen] [line width=1pt] (d) .. controls (7.5, 1.5) and (9,2.5) .. (5,1) .. controls(1.7,0) and (1.5,0.5) ..(c);
     
    \node at (8.75,4.25)[myblue,label=below:$ $,fill=white, circle](D) {$F$};
    
    
     
    
  \end{tikzpicture}}
}
\subfigure[ ]{
\resizebox{.22\textwidth}{!}{     
  \begin{tikzpicture}
[scale=0.7, label distance=3pt, every node/.style={fill,circle,inner sep=0pt,minimum size=6pt}]
 
    \node at (0,0)[myblue,label=above left :$c$,fill=black, circle](c) {};
    \node at (12,0)[myblue,label=above right:$d$,fill=black, circle](d) {};
    \node at (6,10.3923)[myblue,label=above left:$b$,fill=black, circle](b) {};
    \node at (6,3.4641)[myblue,label=below:$a$,fill=black, circle](a) {};
    
    \draw [dashed] (a) -- (b);
    \draw [dashed] (b) -- (c);
    \draw [dashed] (a) -- (c);
    \draw [dashed] (a) -- (d);
    \draw [dashed] (b) -- (d);
    \draw [dashed] (c) -- (d);
    
    \draw [densely dashed] [color=blue] [line width=1.5pt] (a) .. controls (3.35, 4.5) .. (b);
     \draw [densely dashed][color=blue] [line width=1.5pt] (a) .. controls (8.65, 4.5) .. (b);
     
    \node at (7.75,6)[myblue,label=below:$ $,fill=white, circle](C) {$C''$};
    \node at (4.25,6)[myblue,label=below:$ $,fill=white, circle](C) {$C'$};
    
     \draw [color=red] [line width=1.5pt] (b) .. controls (4.7, 5) .. (c);
     \draw [color=red] [line width=1.5pt] (b) .. controls (10, 6) .. (d);

    \draw [color=mygreen] [line width=1pt] (a) .. controls (4, 5.45) .. (c);
     \draw [color=mygreen] [line width=1pt] (a) .. controls (8, 1.45) .. (d);
    
    \node at (2.85,4)[myblue,label=below:$ $,fill=white, circle](D) {$F$};
    
    \node at (10,5.4)[myblue,label=below:$ $,fill=white, circle](F) {$D$};
    
   
     \draw [color=mygreen] [line width=1pt] (c) .. controls (4.5, 1.5) and (3,2.5) .. (7,1) .. controls(10.3,0) and (10.5,0.5) ..(d);
     
     \draw [color=red] [line width=1pt] (d) .. controls (7.5, 1.5) and (9,2.5) .. (5,1) .. controls(1.7,0) and (1.5,0.5) ..(c);

    
     
    
  \end{tikzpicture}}
}

\caption{Cases in the proof of
Claim~\ref{remark:lct-tw3-remark-1}
of Lemma~\ref{lemma:lct-tw3-X2-all-equivalent-then-F}.
(a)~$F$ jumps both~$abc$ and~$abd$.
(b)~$F$ jumps only~$abd$ and~$F_{cd}$ does not jump~$acd$. In this case, $D_{cd} \cdot F_{cd} \in \mathcal{X}_2$.
(c)~$F$ jumps only~$abd$ and~$F_{cd}$ jumps~$acd$.
We consider a subpath $R$ of $C$ that starts at $b$ and finishes at $D$ and proceed similar to Case $a$.
(d)~$F$ jumps only~$abc$.
We can join parts of $F$ and $D$ with $C''$ and obtain longer cycles.
}
\label{fig:2crosses-implies-fenced-1}
\end{figure}
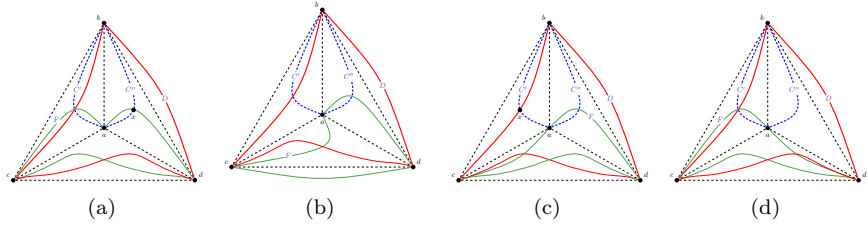

By Claim~\ref{remark:lct-tw3-remark-1}
we have
${\Branch_t(F_{ac}), \Branch_t(D_{bc}) \subseteq \cB_t(abc)}$ 
and~${\Branch_t(F_{ad}), \Branch_t(D_{bd}) \subseteq \cB_t(abd)}$
(Figure~\ref{fig:lct-remark-1}).

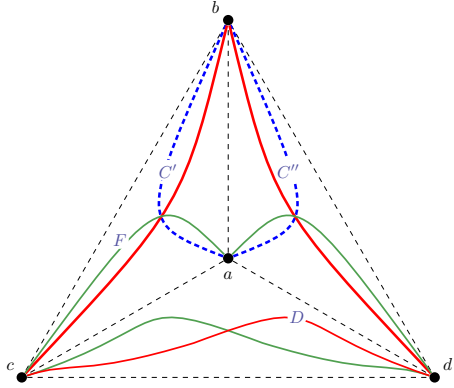
\begin{figure}[H]
\centering
\resizebox{.5\textwidth}{!}{  \begin{tikzpicture}
[scale=0.7, label distance=3pt, every node/.style={fill,circle,inner sep=0pt,minimum size=6pt}]
 
    \node at (0,0)[myblue,label=above left :$c$,fill=black, circle](c) {};
    \node at (12,0)[myblue,label=above right:$d$,fill=black, circle](d) {};
    \node at (6,10.3923)[myblue,label=above left:$b$,fill=black, circle](b) {};
    \node at (6,3.4641)[myblue,label=below:$a$,fill=black, circle](a) {};
    
    \draw [dashed] (a) -- (b);
    \draw [dashed] (b) -- (c);
    \draw [dashed] (a) -- (c);
    \draw [dashed] (a) -- (d);
    \draw [dashed] (b) -- (d);
    \draw [dashed] (c) -- (d);
    
    \draw [densely dashed] [color=blue] [line width=1.5pt] (a) .. controls (3.35, 4.5) .. (b);
     \draw [densely dashed][color=blue] [line width=1.5pt] (a) .. controls (8.65, 4.5) .. (b);
     
    \node at (7.75,6)[myblue,label=below:$ $,fill=white, circle](C) {$C''$};
    \node at (4.25,6)[myblue,label=below:$ $,fill=white, circle](C) {$C'$};
    
     \draw [color=red] [line width=1.5pt] (b) .. controls (4.7, 5) .. (c);
     \draw [color=red] [line width=1.5pt] (b) .. controls (7.3, 5) .. (d);

    \draw [color=mygreen] [line width=1pt] (a) .. controls (4, 5.45) .. (c);
     \draw [color=mygreen] [line width=1pt] (a) .. controls (8, 5.45) .. (d);
    
    \node at (2.85,4)[myblue,label=below:$ $,fill=white, circle](D) {$F$};

   
     \draw [color=mygreen] [line width=1pt] (c) .. controls (4.5, 1.5) and (3,2.5) .. (7,1) .. controls(10.3,0) and (10.5,0.5) ..(d);
     
     \draw [color=red] [line width=1pt] (d) .. controls (7.5, 1.5) and (9,2.5) .. (5,1) .. controls(1.7,0) and (1.5,0.5) ..(c);

    \node at (8,1.75)[myblue,label=below:$ $,fill=white, circle](D) {$D$};
    
    
     
    
  \end{tikzpicture}}
\caption{The structure given by Claim~\ref{remark:lct-tw3-remark-1} of Lemma~\ref{lemma:lct-tw3-X2-all-equivalent-then-F}.}
\label{fig:lct-remark-1}
\end{figure}

Now, by Corollary~\ref{corollary:lct-tw3-lct>1-implies-not-jump},
as~${\lct(G)>1}$, there exists a longest cycle~$H$ that does not~$t$-jump~$abc$.
If~$H$ intersects~$V_t$ at most once, then~$H$ is fenced by~$V_t$
and~$\calF \neq \emptyset$.
Also, by Claim \ref{remark:lct-tw3-remark-C-jumps-abc-abd}, $H$ can not 2-intersect $V_t$.
So,~$H$ intersects~$V_t$ at least three times.

Suppose for a moment that~$H$ {4-intersects}~$V_t$.
As~$H$ is a cycle, either~$H$ has a subpath
from~$a$ to~$c$ that is disjoint from~$b$ and~$d$,
or~$H$ has a subpath
from~$b$ to~$c$ that is disjoint from~$a$ and~$d$.
Without loss of generality assume the former.
Let~$H_{ac}$ be such subpath.
Then there exists a subpath~$R$ of~$H_{ac}$ from~$a$ to~$D_{cd}$
that is internally disjoint from both~$C$ and~$D$.
Let~$x$ be its extreme in~$D$.
Then~${D_{bc} \cdot C'' \cdot R \cdot D_{xc}}$ and
${C' \cdot D_{bd} \cdot D_{dx} \cdot R}$ are both cycles,
one of them longer than~$L$,
a contradiction
(Figure~\ref{fig:lct-2-crosses-implies-fenced-3}(a)).
We conclude that~$H$ {3-intersects}~$V_t$.
If~$H$ is~$t$-inside~$abc$, then~$H$ is fenced
by~$V_t$ by Corollary~\ref{corollary:lct-tw3-lct>1-implies-not-jump},
so~$\calF \neq \emptyset$ and we are done.
Thus,~$H$ is~$t$-outside~$abc$.

If~$H$ contains both~$a$ and~$c$, then~$H_{ac}$, the subpath of~$H$
from~$a$ to~$c$, is internally disjoint from~$C$ and we
proceed as before. Hence, either~$H$ does not contain~$a$
or~$H$ does not contain~$c$.
Suppose for a moment that~$H$ does not contain~$a$.
Then~$H_{bc}$, the subpath of~$H$ from~$b$ to~$c$,
is internally disjoint from~$C$ so we proceed
as before (with~$C$ and~$F$ instead of~$C$ and~$D$).
So, we may assume that~$H$ does not contain~$c$.
As~$H$ is~$t$-outside~$abc$, we have 
that~$H_{ab}$ is~$t$-inside~$abd$.
Moreover, observe that~$H$ is~$t$-inside~$abd$. Indeed, if for example
$H_{bd}$ is~$t$-outside~$abd$ then we proceed as before.
The same reasoning applies if~$H_{ad}$ is~$t$-outside~$abd$.
Thus, as~${(F-F_{ad}) \cdot H_{ad}}$ 
and~${(F-F_{ad}) \cdot H_{ab} \cdot H_{bd}}$ are cycles,
we have that~${|F_{ad}|\geq L/2}$.
Now, repeating the argument with~$abd$ instead of~$abc$, we
deduce that there exists a longest cycle~$J$ that {3-intersects}~$V_t$ at
$abc$ and is~$t$-inside~$abc$.
Hence~${|F_{ac}|\geq L/2}$. This implies that~$|F_{cd}|=0$, a contradiction
(Figure~\ref{fig:lct-2-crosses-implies-fenced-3}(b)).
\end{proof}

\begin{figure}[hbt]
\subfigure[ ]{
\resizebox{.5\textwidth}{!}{  
  \begin{tikzpicture}
[scale=0.7, label distance=3pt, every node/.style={fill,circle,inner sep=0pt,minimum size=6pt}]
 
    \node at (0,0)[myblue,label=above left :$c$,fill=black, circle](c) {};
    \node at (12,0)[myblue,label=above right:$d$,fill=black, circle](d) {};
    \node at (6,10.3923)[myblue,label=above left:$b$,fill=black, circle](b) {};
    \node at (6,3.4641)[myblue,label=below:$a$,fill=black, circle](a) {};
    
    \draw [dashed] (a) -- (b);
    \draw [dashed] (b) -- (c);
    \draw [dashed] (a) -- (c);
    \draw [dashed] (a) -- (d);
    \draw [dashed] (b) -- (d);
    \draw [dashed] (c) -- (d);
    
    \draw [densely dashed] [color=blue] [line width=1.5pt] (a) .. controls (3.35, 4.5) .. (b);
    \draw [densely dashed][color=blue] [line width=1.5pt] (a) .. controls (8.65, 4.5) .. (b);
    \draw [color=mygreen] [line width=1pt] (a) .. controls (7.65, 5.5) .. (b);
     
    \node at (7.95,6)[myblue,label=below:$ $,fill=white, circle](C) {$C''$};
    \node at (4.25,6)[myblue,label=below:$ $,fill=white, circle](C) {$C'$};
    
     \draw [color=red] [line width=1.5pt] (b) .. controls (4.7, 5) .. (c);
     \draw [color=mygreen] [line width=1pt] (b) .. controls (10, 6) .. (d);
     \draw [color=red] [line width=1.5pt] (b) .. controls (7.3, 5) .. (d);
     \draw [color=mygreen] [line width=1pt] (c) .. controls (6, -1) .. (d);

     \draw [color=mygreen] [line width=1pt] (a) .. controls (7, 1.25) .. (c);
    
    
    \node at (10.15,5)[myblue,label=below:$ $,fill=white, circle](D) {$H$};

      \node at (6,1.35)[myblue,label=below:$x$,fill=black, circle](x) {};
   
     \draw [color=red] [line width=1pt] (c) .. controls (4.5, 1.5) and (3,2.5) .. (7,1) .. controls(10.3,0) and (10.5,0.5) ..(d);
     

    \node at (8,0.75)[myblue,label=below:$ $,fill=white, circle](D) {$D$};
    
    
     
    
    
    \node at (6,1.35)[myblue,label=below:$x$,fill=black, circle](x) {};
    
  \end{tikzpicture}}
}
\subfigure[ ]{
\resizebox{.5\textwidth}{!}{       
  \begin{tikzpicture}
[scale=0.7, label distance=3pt, every node/.style={fill,circle,inner sep=0pt,minimum size=6pt}]
 
    \node at (0,0)[myblue,label=above left :$c$,fill=black, circle](c) {};
    \node at (12,0)[myblue,label=above right:$d$,fill=black, circle](d) {};
    \node at (6,10.3923)[myblue,label=above left:$b$,fill=black, circle](b) {};
    \node at (6,3.4641)[myblue,label=below:$a$,fill=black, circle](a) {};
    
    \draw [dashed] (a) -- (b);
    \draw [dashed] (b) -- (c);
    \draw [dashed] (a) -- (c);
    \draw [dashed] (a) -- (d);
    \draw [dashed] (b) -- (d);
    \draw [dashed] (c) -- (d);
    
     
    
     \draw [color=red] [line width=1.5pt] (b) .. controls (7.6, 5) .. (d);
      \draw [densely dashed][color=blue] [line width=1.5pt] (b) .. controls (4.4, 5) .. (c);

    \draw [color=mygreen] [line width=1pt] (a) .. controls (4, 5.45) .. (c);
     \draw [color=mygreen] [line width=1pt] (a) .. controls (8, 5.45) .. (d);
     \draw [color=red] [line width=1.5pt] (a) .. controls (7.5, 4.25) .. (d);
     \draw [densely dashed][color=blue] [line width=1.5pt] (a) .. controls (4.5, 4.25) .. (c);

   
     
     
     \draw [color=mygreen] [line width=1pt] (c) .. controls (6, 2) .. (d);
    \node at (6,1.5)[myblue,label=below:$ $,fill=white, circle](D) {$F$};
    
    \draw [color=red] [line width=1.5pt] (a) .. controls (7.5, 4.5) .. (b);
    
    \draw [densely dashed][color=blue] [line width=1.5pt] (a) .. controls (4.5, 4.5) .. (b);
    
    \node at (7.65,5.75)[myblue,label=below:$ $,fill=white, circle](H) {$H$};
    \node at (4.35,5.75)[myblue,label=below:$ $,fill=white, circle](J) {$J$};

    
     
    
  \end{tikzpicture}}
}
\caption{Cases in the last part of the proof of Lemma~\ref{lemma:lct-tw3-X2-all-equivalent-then-F}.
(a)~$H$ {4-intersects}~$V_t$.
(b)~$H$ {3-intersects}~$V_t$.
}
\label{fig:lct-2-crosses-implies-fenced-3}
\end{figure}
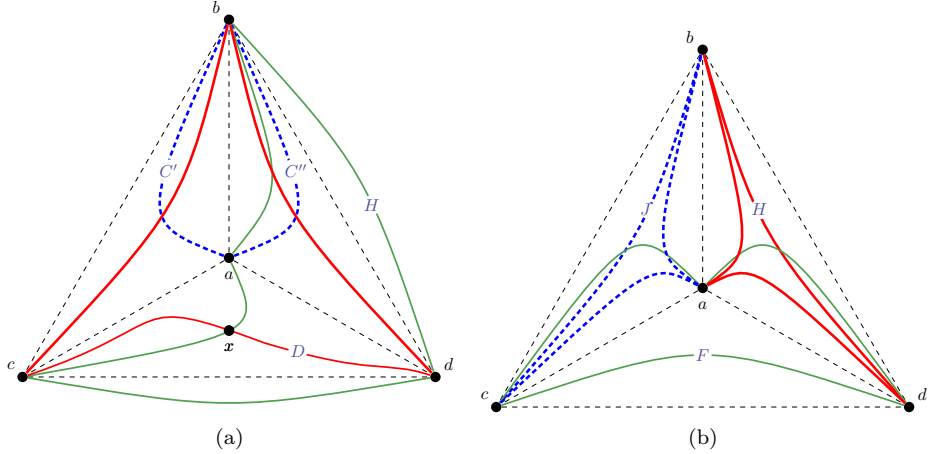

The next lemma is the most technical.
Its proof is organized into four claims.

\begin{lemma}\label{lemma:lct-tw3-3crosses}
\label{lemma:lct-tw3-X2-empty-then-F}
If~$\twocross = \emptyset$, then~$\calF \neq \emptyset$.
\end{lemma}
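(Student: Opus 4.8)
The plan is to argue by contradiction: assume $\calF=\emptyset$. Together with the hypothesis $\twocross=\emptyset$, this forces every longest cycle to intersect $V_t$ in exactly three or four vertices, and every longest cycle that $3$-intersects $V_t$ to actually $3$-cross $V_t$ — indeed, a longest cycle meeting $V_t$ at most three times and fenced by $V_t$ would lie in $\calF$, while one meeting $V_t$ exactly twice and crossing $V_t$ would lie in $\twocross$, and one meeting $V_t$ at most once is automatically fenced. Since $\lct(G)>1$, for each vertex of $V_t=\{a,b,c,d\}$ there is a longest cycle avoiding it; such a cycle meets $V_t$ in exactly the remaining three vertices and therefore $3$-crosses $V_t$ at that triple. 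Hence each of the four triples of $V_t$ hosts a $3$-crossing longest cycle, and in particular such a cycle exists at all.

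Next I would fix one such cycle, say $C$ that $3$-crosses $V_t$ at $abc$ with $d\notin V(C)$, and write $C=C_{ab}\cdot C_{bc}\cdot C_{ac}$. A short argument shows the interior of each nonempty $abc$-part lies entirely in one branch of $T$ at $t$ whose boundary triple $V_t\cap V_{t'}$ contains both endpoints of that part; and, since $C$ crosses $V_t$, at least two of the parts lie in distinct branches. I then invoke Corollary~\ref{corollary:lct-tw3-lct>1-implies-not-jump}: as $\lct(G)>1$, there is a longest cycle $H$ that does not $t$-jump $abc$; by the reduction above $H$ meets $abc$ in at least two vertices, so $H$ is $t$-inside or $t$-outside $abc$. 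The core of the proof is then an uncrossing step. Using that $C$, $H$, and the other three $3$-crossing longest cycles pairwise meet in at least two vertices (Proposition~\ref{prop:two-longest-cycles-intersect}), and that $2$-connectivity of $G$ supplies two internally disjoint paths from $d$ to distinct vertices of $\{a,b,c\}$ — these play the role that $P$ and $Q$ play in the proof of Lemma~\ref{lemma:lct-tw3-X2-2-nonequivalent-then-F} — I would splice $abc$-parts of $C$ with suitable subpaths of $H$ (or of the longest cycles avoiding $a$, $b$, or $c$) across the separators $V_t\cap V_{t'}$. In every configuration this should produce either a cycle longer than $L(G)$, or a longest cycle that $2$-crosses $V_t$ (contradicting $\twocross=\emptyset$), or a longest cycle fenced by $V_t$ and meeting it at most three times (contradicting $\calF=\emptyset$). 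The case analysis splits on how many of $C_{ab},C_{bc},C_{ac}$ are nonempty and which branches they occupy, on whether $H$ is $t$-inside or $t$-outside $abc$, and on whether $d$ is $t$-inside $abc$; this is the natural source of the four claims into which the proof is organized.

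The main obstacle is the branch bookkeeping. Because the tree decomposition is only full and not the decomposition of a $3$-tree, several branches at $t$ may share the same boundary triple, so being $t$-inside $abc$ does not imply being fenced by $V_t$, and a cycle $3$-crossing at $abc$ can still spread across many branches behind the triple $abc$. Consequently each splicing step requires re-extracting subpaths that are internally disjoint from the cycles being combined, and one must verify that the resulting cycle genuinely crosses $V_t$ fewer times (or else reaches a length contradiction). I expect the decisive point to be ruling out the symmetric situation in which the four cycles $C^{(a)},C^{(b)},C^{(c)},C^{(d)}$ avoiding $a,b,c,d$ respectively all $3$-cross $V_t$ in mutually consistent ways: pairing them off and, for each pair, retaining the longer of the two parts joining their common pair of vertices should eventually force two parts whose total length exceeds $L(G)$, exactly as the final length contradictions are obtained in the proof of Theorem~\ref{theorem:lct-tw3-lct=1}.
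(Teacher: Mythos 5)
Your opening reduction is correct and matches the paper's: under $\twocross=\emptyset$ and $\calF=\emptyset$, every longest cycle meets $V_t$ three or four times, and for each vertex of $V_t$ the longest cycle avoiding it must $3$-cross $V_t$ at the complementary triple, so each collection $\cC_\Delta$ is nonempty. After that, however, there are two genuine gaps.

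First, you invoke Corollary~\ref{corollary:lct-tw3-lct>1-implies-not-jump} for the triple $abc$ as if $\lct(G)>1$ alone sufficed. The corollary has a second hypothesis: for every pair $\{i,j\}\subseteq\Delta$ there must exist a longest cycle that $2$-\emph{jumps} $\Delta$ at $\{i,j\}$. That a cycle in $\cC_{abd}$ meets $abc$ at $\{a,b\}$ does not make it $2$-jump $abc$ --- it could be entirely $t$-inside or entirely $t$-outside $abc$. The paper only obtains this hypothesis inside a further proof by contradiction (Claim~\ref{remark:lct-tw3-at-least-one-does-not-jump-Delta}: assume every cycle in $\cC\setminus\cC_{abc}$ jumps $abc$; only then do the nonempty $\cC_{abd},\cC_{acd},\cC_{bcd}$ supply the required $2$-jumping cycles). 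Without that intermediate step your appeal to the corollary is unjustified, and the cycle $H$ you rely on need not exist.

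Second, the part you describe as ``the core of the proof'' --- the uncrossing/splicing case analysis --- is asserted rather than carried out: ``in every configuration this should produce either a cycle longer than $L(G)$, or \dots'' is exactly the claim that needs proving, and it is where essentially all the work of the paper's proof lives. The paper needs a chain of four structured claims: that for each $\Delta$ some cycle outside $\cC_\Delta$ fails to jump $\Delta$; that every cycle in $\cC_\Delta$ jumps at least two of the other triples (via a quantitative bound $|C_{ab}|\geq L/2$); that some $\Delta$ has all its cycles jumping exactly two of the other triples (via a sum-of-lengths argument showing two spliced cycles exceed $2L$ in total); and a final localization of $F$, $H$, $J$. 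Your closing suggestion of ``pairing off the four cycles and keeping the longer part'' does not obviously reproduce any of these steps, and the branch-bookkeeping difficulty you correctly flag (several branches sharing the boundary triple $abc$) is acknowledged but not resolved. As written, the proposal is a plan with the right skeleton but not a proof.
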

\begin{proof}
As~$\lct(G)>1$ and~${\twocross = \emptyset}$,
for every triple of vertices~${\Delta \subseteq V_t}$, there exists a nonempty
collection~$\cC_{\Delta}$ of longest cycles that 3-intersect~$V_t$ at~$\Delta$.
Suppose by contradiction that~$\calF = \emptyset$.
Then, for every~${\Delta \subseteq V_t}$, every cycle in~$\cC_{\Delta}$ crosses~$V_t$.
Let~$\cC = \{\cC_{\Delta}: \Delta \subseteq V_t\}$.

\begin{claim}\label{remark:lct-tw3-at-least-one-does-not-jump-Delta}
For every triple of vertices~${\Delta \subseteq V_t}$, at least
one cycle in~${\cC \setminus \cC_{\Delta}}$
does not jump~$\Delta$.
\end{claim}
\begin{proof}[Proof of Claim \ref{remark:lct-tw3-at-least-one-does-not-jump-Delta}]
Suppose by contradiction that all cycles
in~${\cC \setminus \cC_{abc}}$ jump~$abc$.
(The other cases are analogous.)
Then, as~${\cC_{abd},\cC_{bcd},\cC_{acd} \neq \emptyset}$,
for every~${\{i,j\} \subseteq abc}$,
there exists a longest cycle that 2-jumps~$abc$ at~$\{i,j\}$.
Hence, by Corollary~\ref{corollary:lct-tw3-lct>1-implies-not-jump} applied to~$abc$,
there exists a longest cycle~$C$ that
does not jump~$abc$.
If~$C$ intersects~$V_t$ at most once or is as item~$(ii)$ or~$(iii)$ of
Corollary~\ref{corollary:lct-tw3-lct>1-implies-not-jump},
then, as~$\twocross = \emptyset$,
$C$ is fenced by~$abc$, which implies 
that~$\calF \neq \emptyset$, a contradiction.
Thus, we may assume that~$C$ is as
item~$(i)$ of Corollary~\ref{corollary:lct-tw3-lct>1-implies-not-jump}, that
is, $C$ is outside~$abc$.

The rest of the proof is divided on two cases, whether $C$ 3-intersects or 4-intersects $V_t$. 

\setcounter{case}{0}
\begin{case}
$C$ {3-intersects}~$V_t$.

As all cycles
in~${\cC \setminus \cC_{abc}}$ jump~$abc$,
we have~${C \in \cC_{abc}}$.
Let~${D \in \cC_{abd}}$.
We know that~$D$ jumps~$abc$. If~$D$ jumps both~$acd$ and~$bcd$,
~${(D-D_{ab}) \cdot C_{ab}} \in \cC_{abd}$
but does not jump~$abc$, a contradiction (Figure~\ref{fig:lct=1-or-fenced-1}(a)).
If~$D$ jumps neither~$acd$ nor~$bcd$, 
then
~${C_{ab} \cdot D_{ab}}$ is a longest cycle that 2-crosses~$V_t$,
contradicting the fact that~$\twocross = \emptyset$
(Figure~\ref{fig:lct=1-or-fenced-1}(b)).
Hence, 
$\mbox{every }D \in \cC_{abd} \mbox{ jumps exactly one of } \{acd,bcd\}.$
Analogously,
we can conclude that 
$\mbox{every }F \in \cC_{acd} \mbox{ jumps exactly one of }\{bcd,abd\},$
and that
$\mbox{every }J \in \cC_{bcd} \mbox{ jumps exactly one of } \{abd,acd\}.$

Let~${(D,F,J) \in \cC_{abd} \times \cC_{acd} \times \cC_{bcd}}$.
Suppose that~$D$ jumps~$acd$ and~$F$ jumps~$abd$.
Then, 
both~${(D-D_{ab}) \cdot C_{bc} \cdot F_{ca}}$
and~${(F-F_{ac}) \cdot C_{cb} \cdot D_{ba}}$
are cycles, one of them longer than~$L$, a contradiction
(Figure~\ref{fig:lct=1-or-fenced-1}(c)).
Repeating the same argument with~$\{D,J\}$ and~$\{F,J\}$, 
we conclude that,
without loss of generality,
$D \mbox{ jumps } acd, F \mbox{ jumps } bcd, \mbox{ and } J \mbox{ jumps } abd.$  
But then,~${(D-D_{ab}) \cdot F_{ac} \cdot C_{cb}}$,
${(F-F_{ac}) \cdot J_{cb} \cdot C_{ba}}$,
and
${(J-J_{bc}) \cdot D_{ba} \cdot C_{ac}}$ are cycles
(Figure~\ref{fig:lct=1-or-fenced-1}(d)), a contradiction.
\end{case}


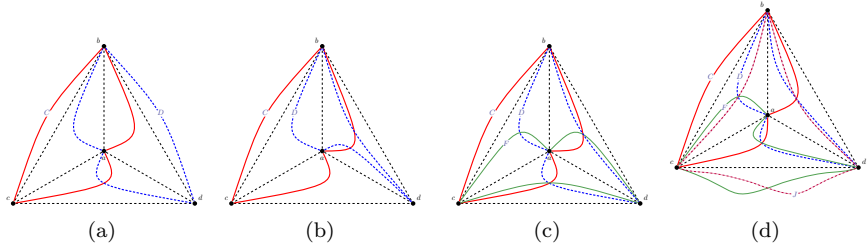
\begin{figure}[H]
\subfigure[ ]{
\resizebox{.22\textwidth}{!}{
  \begin{tikzpicture}
[scale=0.7, label distance=3pt, every node/.style={fill,circle,inner sep=0pt,minimum size=6pt}]
 
    \node at (6,3.4641)[myblue,label=below:$a$,fill=black, circle](a) {};
    \node at (6,10.3923)[myblue,label=above left:$b$,fill=black, circle](b) {};
    \node at (0,0)[myblue,label=above left :$c$,fill=black, circle](c) {};
    \node at (12,0)[myblue,label=above right:$d$,fill=black, circle](d) {};
    
    \draw [dashed] (a) -- (b);
    \draw [dashed] (b) -- (c);
    \draw [dashed] (a) -- (c);
    \draw [dashed] (a) -- (d);
    \draw [dashed] (b) -- (d);
    \draw [dashed] (c) -- (d);
    
    \draw [densely dashed] [color=blue] [line width=1.5pt] (a) .. controls (3.35, 4.5) .. (b);
    \draw [color=red] [line width=1.5pt] (a) .. controls (8.65, 4.5) .. (b);
    
    \draw [color=red] [line width=1.5pt] (a) .. controls (7, 1.25) .. (c);
    
    \draw[densely dashed] [color=blue] [line width=1.5pt]  (a) .. controls (5, 1.25) .. (d);

    \draw [color=red] [line width=1.5pt] (b) .. controls (2, 6) .. (c);

    \draw [densely dashed] [color=blue] [line width=1.5pt]  (b) .. controls (10, 6) .. (d);

    \node at (2.25,6)[myblue,label=below:$ $,fill=white, circle](C) {$C$};
    \node at (9.75,6)[myblue,label=below:$ $,fill=white, circle](D) {$D$};
    
  \end{tikzpicture}}
}
\subfigure[ ]{
\resizebox{.22\textwidth}{!}{
  \begin{tikzpicture}
[scale=0.7, label distance=3pt, every node/.style={fill,circle,inner sep=0pt,minimum size=6pt}]
 
    \node at (6,3.4641)[myblue,label=below:$a$,fill=black, circle](a) {};
    \node at (6,10.3923)[myblue,label=above left:$b$,fill=black, circle](b) {};
    \node at (0,0)[myblue,label=above left :$c$,fill=black, circle](c) {};
    \node at (12,0)[myblue,label=above right:$d$,fill=black, circle](d) {};
    
    \draw [dashed] (a) -- (b);
    \draw [dashed] (b) -- (c);
    \draw [dashed] (a) -- (c);
    \draw [dashed] (a) -- (d);
    \draw [dashed] (b) -- (d);
    \draw [dashed] (c) -- (d);
    
    \draw [densely dashed] [color=blue] [line width=1.5pt] (a) .. controls (3.35, 4.5) .. (b);
    \draw [color=red] [line width=1.5pt] [line width=1.5pt] (a) .. controls (8.85, 3.5) .. (b);

    \draw [color=red] [line width=1.5pt] (a) .. controls (7, 1.25) .. (c);
    
    \draw [densely dashed] [color=blue] [line width=1.5pt] (a) .. controls (7.5, 4.25) .. (d);

    \draw [color=red] [line width=1.5pt] (b) .. controls (2, 6) .. (c);
    
    \draw [densely dashed] [color=blue] [line width=1.5pt] [line width=1.5pt] (b) .. controls (7.6, 5) .. (d);
   
    \node at (2.25,6)[myblue,label=below:$ $,fill=white, circle](C) {$C$};
    \node at (4.15,6)[myblue,label=below:$ $,fill=white, circle](D) {$D$};

  \end{tikzpicture}}
}
\subfigure[ ]{
\resizebox{.22\textwidth}{!}{   
  \begin{tikzpicture}
[scale=0.7, label distance=3pt, every node/.style={fill,circle,inner sep=0pt,minimum size=6pt}]
 
    \node at (6,3.4641)[myblue,label=below:$a$,fill=black, circle](a) {};
    \node at (6,10.3923)[myblue,label=above left:$b$,fill=black, circle](b) {};
    \node at (0,0)[myblue,label=above left :$c$,fill=black, circle](c) {};
    \node at (12,0)[myblue,label=above right:$d$,fill=black, circle](d) {};
    
    \draw [dashed] (a) -- (b);
    \draw [dashed] (b) -- (c);
    \draw [dashed] (a) -- (c);
    \draw [dashed] (a) -- (d);
    \draw [dashed] (b) -- (d);
    \draw [dashed] (c) -- (d);
    
    \draw [densely dashed] [color=blue] [line width=1.5pt] (a) .. controls (3.35, 4.5) .. (b);
    \draw [color=red] [line width=1.5pt] [line width=1.5pt] (a) .. controls (8.95, 3.5) .. (b);

    \draw [color=red] [line width=1.5pt] (a) .. controls (7, 1.25) .. (c);
    
    \draw [densely dashed] [color=blue] [line width=1.5pt] (a) .. controls (5, 1.25) .. (d);
    
    \draw [color=mygreen] [line width=1pt] (a) .. controls (4, 5.45) .. (c);
   
    \draw [color=mygreen] [line width=1pt] (a) .. controls (8, 5.45) .. (d);
    
    \draw [color=red] [line width=1.5pt] (b) .. controls (2, 6) .. (c);
    
    \draw [densely dashed] [color=blue] [line width=1.5pt] [line width=1.5pt] (b) .. controls (7.6, 5) .. (d);
    
    \draw [color=mygreen] [line width=1pt] (c) .. controls (6, 1.8) .. (d);
   
    \node at (2.25,6)[myblue,label=below:$ $,fill=white, circle](C) {$C$};
    \node at (4.15,6)[myblue,label=below:$ $,fill=white, circle](D) {$D$};

    \node at (3.15,4)[myblue,label=below:$ $,fill=white, circle](F) {$F$};
    
  \end{tikzpicture}}
}
\subfigure[ ]{
\resizebox{.22\textwidth}{!}{   
  \begin{tikzpicture}
[scale=0.7, label distance=3pt, every node/.style={fill,circle,inner sep=0pt,minimum size=6pt}]
 
    \node at (6,3.4641)[myblue,label=above right:$a$,fill=black, circle](a) {};
    \node at (6,10.3923)[myblue,label=above left:$b$,fill=black, circle](b) {};
    \node at (0,0)[myblue,label=above left :$c$,fill=black, circle](c) {};
    \node at (12,0)[myblue,label=above right:$d$,fill=black, circle](d) {};
    
    \draw [dashed] (a) -- (b);
    \draw [dashed] (b) -- (c);
    \draw [dashed] (a) -- (c);
    \draw [dashed] (a) -- (d);
    \draw [dashed] (b) -- (d);
    \draw [dashed] (c) -- (d);
    
    \draw [densely dashed] [color=blue] [line width=1.5pt] (a) .. controls (3.35, 4.5) .. (b);
    \draw [color=red] [line width=1.5pt] [line width=1.5pt] (a) .. controls (8.65, 4.5) .. (b);

    \draw [color=red] [line width=1.5pt] (a) .. controls (6, 1.25) .. (c);
    
    \draw [densely dashed] [color=blue] [line width=1.5pt] (a) .. controls (5, 1.25) .. (d);
    
    \draw [color=mygreen] [line width=1pt] (a) .. controls (4.3, 1.73) .. (d);
    
    \draw [color=mygreen] [line width=1pt] (a) .. controls (4, 5.45) .. (c);
   
    
    \draw [color=red] [line width=1.5pt] (b) .. controls (2, 6) .. (c);
    
    \draw [densely dashdotted] [color=purple][line width=1.5pt] (b) .. controls (4.7, 5) .. (c);
    
    \draw [densely dashed] [color=blue] [line width=1.5pt] [line width=1.5pt] (b) .. controls (6.4, 6) .. (d);
    \draw [densely dashdotted] [color=purple] [line width=1.5pt] [line width=1.5pt] (b) .. controls (7.73, 4) .. (d);
    
    \draw [color=mygreen] [line width=1pt] (c) .. controls (4.5, -1.5) and (3,-2.5) .. (7,-1) .. controls(10.3,0) and (10.5,-0.5) ..(d);
    
    \draw [densely dashdotted] [color=purple] [line width=1pt] (d) .. controls (7.5, -1.5) and (9,-2.5) .. (5,-1) .. controls(1.7,0) and (1.5,-0.5) ..(c);
   
    \node at (2.25,6)[myblue,label=below:$ $,fill=white, circle](C) {$C$};
    \node at (4.15,6)[myblue,label=below:$ $,fill=white, circle](D) {$D$};

    \node at (3.15,4)[myblue,label=below:$ $,fill=white, circle](F) {$F$};
    
    \node at (7.8,-1.75)[myblue,label=below:$ $,fill=white, circle](J) {$J$};
    
  \end{tikzpicture}}
}
\caption{Case 1 in the proof of
Claim \ref{remark:lct-tw3-at-least-one-does-not-jump-Delta}
of Lemma~\ref{lemma:lct-tw3-X2-empty-then-F}.
(a)~$D$ jumps both~$acd$ and~$bcd$.
(b)~$D$ does not jump neither~$acd$ nor~$bcd$.
(c)~$D$ jumps~$acd$ and~$F$ jumps~$abd$.
(d)~$D$ jumps~$acd$ and~$F$ jumps~$bcd$ and~$J$ jumps~$abd$.}
\label{fig:lct=1-or-fenced-1}
\end{figure}

\begin{case}
$C$ {4-intersects}~$V_t$.

Without loss of generality, suppose that~$C$ 
has~$abdc$-parts.
As~$C$ is~outside~$abc$, we have that
$C_{ab}$ is inside~$abd$
and~$C_{ac}$ is inside~$acd$.
By similar techniques to the previous cases, we can show
that
$\mbox{every cycle in } \cC_{abd} \mbox{ does not jump } acd.$
({Figures~\ref{fig:remark-at-least-one-does-not-jump-Delta-4inters}(a) and (b)}).
And, by symmetry, we can also show that
$\mbox{every cycle in } \cC_{acd} \mbox{ does not jump } abd.$
Let~$(D,F) \in \cC_{abd} \times \cC_{acd}$.
By 
the previous discussion,
$D_{ad}$ is outside~$acd$, and
$F_{ad}$ is outside~$abd$.
This implies that
$(D-D_{ad}) \cdot F_{ad} \in \cC_{abd}$
and jumps~$acd$, a contradiction 
(Figure \ref{fig:remark-at-least-one-does-not-jump-Delta-4inters}(c)).
\end{case}
This concludes the proof of the Claim.
\end{proof}

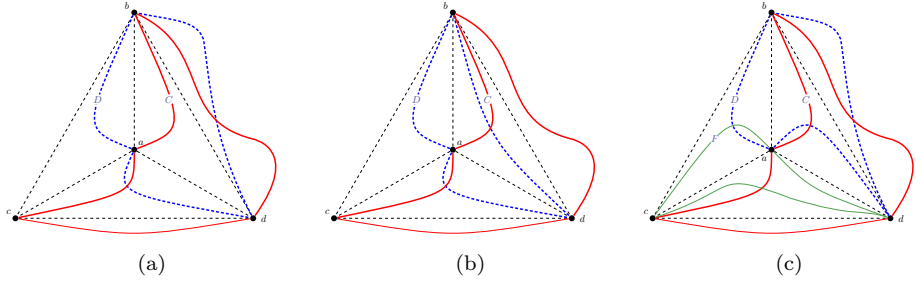
\begin{figure}[hbt]
\subfigure[ ]{
\resizebox{.33\textwidth}{!}{  
  \begin{tikzpicture}
[scale=0.7, label distance=3pt, every node/.style={fill,circle,inner sep=0pt,minimum size=6pt}]
 
    \node at (6,3.4641)[myblue,label=above right:$a$,fill=black, circle](a) {};
    \node at (6,10.3923)[myblue,label=above left:$b$,fill=black, circle](b) {};
    \node at (0,0)[myblue,label=above left :$c$,fill=black, circle](c) {};
    \node at (12,0)[myblue,label= right:$d$,fill=black, circle](d) {};
    
    \draw [dashed] (a) -- (b);
    \draw [dashed] (b) -- (c);
    \draw [dashed] (a) -- (c);
    \draw [dashed] (a) -- (d);
    \draw [dashed] (b) -- (d);
    \draw [dashed] (c) -- (d);
    
    \draw [densely dashed] [color=blue] [line width=1.5pt] (a) .. controls (5, 1.25) .. (d);
    \draw [densely dashed] [color=blue] [line width=1.5pt] (a) .. controls (3.35, 4.5) .. (b);
    \draw  [densely dashed] [color=blue][line width=1.5pt]  (d) .. controls (9.9, 4) and (9.8,7) .. (9.5,9) .. controls(9.2,10) and (8,10) ..(b);

    \draw [color=red] [line width=1.5pt] (d) .. controls (14.5, 3.5) and (12,4) .. (12,4) .. controls(8,5) and (10,8.5) ..(b);
    \draw [color=red] [line width=1.5pt] [line width=1.5pt] (a) .. controls (8.65, 4.5) .. (b);
    \draw [color=red] [line width=1.5pt] (a) .. controls (6, 1.25) .. (c);
    \draw [color=red] [line width=1pt] (c) .. controls (6, -1) .. (d);
    
    \node at (7.75,6)[myblue,label=below:$ $,fill=white, circle](C) {$C$};
   
    \node at (4.15,6)[myblue,label=below:$ $,fill=white, circle](D) {$D$};
    
  \end{tikzpicture}}
}
\subfigure[ ]{
\resizebox{.33\textwidth}{!}{
  \begin{tikzpicture}
[scale=0.7, label distance=3pt, every node/.style={fill,circle,inner sep=0pt,minimum size=6pt}]
 
    \node at (6,3.4641)[myblue,label=above right:$a$,fill=black, circle](a) {};
    \node at (6,10.3923)[myblue,label=above left:$b$,fill=black, circle](b) {};
    \node at (0,0)[myblue,label=above left :$c$,fill=black, circle](c) {};
    \node at (12,0)[myblue,label= right:$d$,fill=black, circle](d) {};
    
    \draw [dashed] (a) -- (b);
    \draw [dashed] (b) -- (c);
    \draw [dashed] (a) -- (c);
    \draw [dashed] (a) -- (d);
    \draw [dashed] (b) -- (d);
    \draw [dashed] (c) -- (d);
    
    \draw [densely dashed] [color=blue] [line width=1.5pt] (a) .. controls (5, 1.25) .. (d);
    \draw [densely dashed] [color=blue] [line width=1.5pt] (a) .. controls (3.35, 4.5) .. (b);
    \draw [densely dashed] [color=blue] [line width=1.5pt] [line width=1.5pt] (b) .. controls (7.73, 4) .. (d);

    \draw [color=red] [line width=1.5pt] (d) .. controls (14.5, 3.5) and (12,4) .. (12,4) .. controls(8,5) and (10,8.5) ..(b);
    \draw [color=red] [line width=1.5pt] [line width=1.5pt] (a) .. controls (8.65, 4.5) .. (b);
    \draw [color=red] [line width=1.5pt] (a) .. controls (6, 1.25) .. (c);
    \draw [color=red] [line width=1pt] (c) .. controls (6, -1) .. (d);
    
    \node at (7.75,6)[myblue,label=below:$ $,fill=white, circle](C) {$C$};
   
    \node at (4.15,6)[myblue,label=below:$ $,fill=white, circle](D) {$D$};
    
  \end{tikzpicture}}
}
\subfigure[ ]{
\resizebox{.33\textwidth}{!}{   
  \begin{tikzpicture}
[scale=0.7, label distance=3pt, every node/.style={fill,circle,inner sep=0pt,minimum size=6pt}]
 
    \node at (6,3.4641)[myblue,label=below left:$ $,fill=black, circle](a) {};
    \node at (6,10.3923)[myblue,label=above left:$b$,fill=black, circle](b) {};
    \node at (0,0)[myblue,label=above left :$c$,fill=black, circle](c) {};
    \node at (12,0)[myblue,label= right:$d$,fill=black, circle](d) {};
    
    \draw [dashed] (a) -- (b);
    \draw [dashed] (b) -- (c);
    \draw [dashed] (a) -- (c);
    \draw [dashed] (a) -- (d);
    \draw [dashed] (b) -- (d);
    \draw [dashed] (c) -- (d);
    
    \draw [densely dashed] [color=blue][line width=1.5pt](a) .. controls (8, 5.45) .. (d);
    \draw [densely dashed] [color=blue] [line width=1.5pt] (a) .. controls (3.35, 4.5) .. (b);
    \draw  [densely dashed] [color=blue][line width=1.5pt]  (d) .. controls (9.9, 4) and (9.8,7) .. (9.5,9) .. controls(9.2,10) and (8,10) ..(b);

    \draw [color=red] [line width=1.5pt] (d) .. controls (14.5, 3.5) and (12,4) .. (12,4) .. controls(8,5) and (10,8.5) ..(b);
    \draw [color=red] [line width=1.5pt] [line width=1.5pt] (a) .. controls (8.65, 4.5) .. (b);
    \draw [color=red] [line width=1.5pt] (a) .. controls (6, 1.25) .. (c);
    \draw [color=red] [line width=1pt] (c) .. controls (6, -1) .. (d);
    
    \draw [color=mygreen] [line width=1pt] (a) .. controls (4, 5.45) .. (c);
    \draw [color=mygreen] [line width=1pt] (a) .. controls (8, 1.45) .. (d);
    \draw [color=mygreen] [line width=1pt] (c) .. controls (4.5, 1.5) and (3,2.5) .. (7,1) .. controls(10.3,0) and (10.5,0.5) ..(d);
    
    \node at (7.75,6)[myblue,label=below:$ $,fill=white, circle](C) {$C$};
   
    \node at (4.15,6)[myblue,label=below:$ $,fill=white, circle](D) {$D$};
    \node at (5.65,3.5)[myblue,label=below:$a$,fill=none, circle](x) {};
    \node at (3.15,4)[myblue,label=below:$ $,fill=white, circle](F) {$F$};
    
  \end{tikzpicture}
 }
}
\caption{Case 2 in the proof of Claim~\ref{remark:lct-tw3-at-least-one-does-not-jump-Delta}.
(a)~$D$ jumps~$acd$ and~$D_{bd}$ is~outside~$abd$.
In this situation,~${(D-D_{ab}) \cdot C_{ab} \in \cC_{abd}}$
but does not jump~$abc$.
(b)~$D$ jumps~$acd$ and~$D_{bd}$ is~inside~$abd$.
In this situation, ${C_{ab} \cdot C_{bd} \cdot D_{da} \in \cC_{abd}}$
but does not jump~$abc$.
(c)~$D$ does not jump~$acd$
 and~$F$ does not jump~$abd$.}
\label{fig:remark-at-least-one-does-not-jump-Delta-4inters}
\end{figure}

The next claim is used in the proofs
of Claims~\ref{remark:lct-tw3-remark-1-3crosses} and~\ref{remark:lct-tw3-remark-2-3crosses}.
It uses Claim~\ref{remark:lct-tw3-at-least-one-does-not-jump-Delta}.

\begin{claim}\label{remark:lct-tw3-remark-0-3crosses}
For every triple of vertices~${\Delta \subseteq V_t}$, every cycle
in~$\cC_{\Delta}$~jumps at least two of
${\{abd, acd, bcd, abc\} \setminus \{\Delta\}}$.
\end{claim}
\begin{proof}
[Proof of Claim~\ref{remark:lct-tw3-remark-0-3crosses}]
Without loss of generality, suppose that~$\Delta=abc$ and there exists a cycle, say~$C \in \cC_{abc}$, that
jumps at most one of~$\{abd,acd,bcd\}$. 
If~$C$ jumps no one of~$\{abd,acd,bcd\}$,
then~$C$ is inside~$abc$.
By Claim~\ref{remark:lct-tw3-at-least-one-does-not-jump-Delta},
there exists a longest cycle~$D \in \cC \setminus \cC_{abc}$ that does not jump~$abc$.
By joining two parts of $C$ and $D$, we obtain a cycle
in $\twocross$, a contradiction.

So,
without loss of generality suppose $C$ jumps~$abd$.
Let~$D \in \cC_{abd}$.
If~$D$ jumps~$abc$,
then, ${C_{ab} \cdot D_{ab}} \in \twocross$,
a contradiction.
(Figure~\ref{fig:lct-remark-0-3crosses}(a)).
Hence,~$D$ is outside~$abc$.
Thus, as both~${(C-C_{ab}) \cdot D_{ab}}$
and~${(C-C_{ab}) \cdot (D-D_{ab})}$ are cycles, we
conclude that
$|C_{ab}| \geq L/2.$

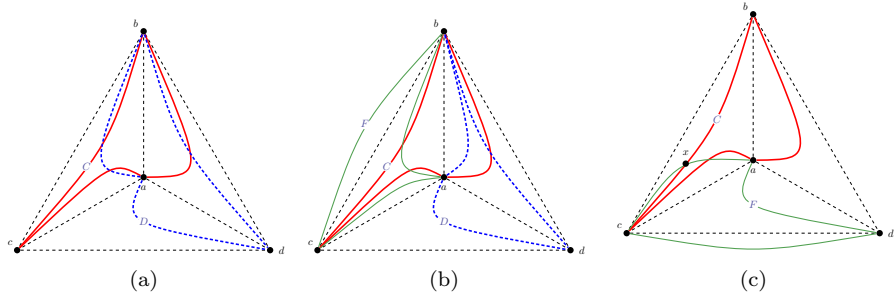
\begin{figure}[H]
\center
\subfigure[ ]{
\resizebox{.31\textwidth}{!}{    
  \begin{tikzpicture}
[scale=0.7, label distance=3pt, every node/.style={fill,circle,inner sep=0pt,minimum size=6pt}]
 
    \node at (6,3.4641)[myblue,label=below :$a$,fill=black, circle](a) {};
    \node at (6,10.3923)[myblue,label=above left:$b$,fill=black, circle](b) {};
    \node at (0,0)[myblue,label=above left :$c$,fill=black, circle](c) {};
    \node at (12,0)[myblue,label= right:$d$,fill=black, circle](d) {};
    
    \draw [dashed] (a) -- (b);
    \draw [dashed] (b) -- (c);
    \draw [dashed] (a) -- (c);
    \draw [dashed] (a) -- (d);
    \draw [dashed] (b) -- (d);
    \draw [dashed] (c) -- (d);
    
    \draw [color=red] [line width=1.5pt] [line width=1.5pt] (a) .. controls (8.95, 3.5) .. (b);
    \draw  [color=red] [line width=1.5pt] (a) .. controls (4.5, 4.25) .. (c);
    \draw  [color=red] [line width=1.5pt] (b) .. controls (4.4, 5) .. (c);
     
    \draw [densely dashed] [color=blue] [line width=1.5pt] (b) .. controls (7.6, 5) .. (d);
    \draw [densely dashed] [color=blue] [line width=1.5pt] (a) .. controls (3.35, 3.8) .. (b);
    \draw [densely dashed] [color=blue] [line width=1.5pt] (a) .. controls (5, 1.25) .. (d);
    
    \node at (3.3,4)[myblue,label=below:$ $,fill=white, circle](C) {$C$};
    \node at (6,1.35)[myblue,label=below:$ $,fill=white, circle](D) {$D$};
    
  \end{tikzpicture}}
}
\subfigure[ ]{
\resizebox{.31\textwidth}{!}{    
  \begin{tikzpicture}
[scale=0.7, label distance=3pt, every node/.style={fill,circle,inner sep=0pt,minimum size=6pt}]
 
    \node at (6,3.4641)[myblue,label=below :$a$,fill=black, circle](a) {};
    \node at (6,10.3923)[myblue,label=above left:$b$,fill=black, circle](b) {};
    \node at (0,0)[myblue,label=above left :$c$,fill=black, circle](c) {};
    \node at (12,0)[myblue,label= right:$d$,fill=black, circle](d) {};
    
    \draw [dashed] (a) -- (b);
    \draw [dashed] (b) -- (c);
    \draw [dashed] (a) -- (c);
    \draw [dashed] (a) -- (d);
    \draw [dashed] (b) -- (d);
    \draw [dashed] (c) -- (d);
    
    \draw [color=red] [line width=1.5pt] [line width=1.5pt] (a) .. controls (8.95, 3.5) .. (b);
    \draw  [color=red] [line width=1.5pt] (a) .. controls (4.5, 4.25) .. (c);
    \draw  [color=red] [line width=1.5pt] (b) .. controls (4.4, 5) .. (c);
     
    \draw [densely dashed] [color=blue] [line width=1.5pt] (b) .. controls (7.6, 5) .. (d);
    \draw [densely dashed] [color=blue] [line width=1.5pt] (a) .. controls (5, 1.25) .. (d);
    \draw [densely dashed] [color=blue] [line width=1.5pt] (a) .. controls (7.5, 4.5) .. (b);
    
    \draw [color=mygreen] [line width=1pt] (a) .. controls (3.35, 3.8) .. (b);
    \draw [color=mygreen] [line width=1pt] (a) .. controls (4, 3.25) .. (c);
    \draw [color=mygreen] [line width=1pt] (b) .. controls (2, 6) .. (c);
    
    \node at (3.3,4)[myblue,label=below:$ $,fill=white, circle](C) {$C$};
    \node at (6,1.35)[myblue,label=below:$ $,fill=white, circle](D) {$D$};
    \node at (2.25,6)[myblue,label=below:$ $,fill=white, circle](F) {$F$};
    
  \end{tikzpicture}}
}
\subfigure[ ]{
\resizebox{.31\textwidth}{!}{ 
  \begin{tikzpicture}
[scale=0.7, label distance=3pt, every node/.style={fill,circle,inner sep=0pt,minimum size=6pt}]
 
    \node at (6,3.4641)[myblue,label=below :$a$,fill=black, circle](a) {};
    \node at (6,10.3923)[myblue,label=above left:$b$,fill=black, circle](b) {};
    \node at (0,0)[myblue,label=above left :$c$,fill=black, circle](c) {};
    \node at (12,0)[myblue,label= right:$d$,fill=black, circle](d) {};
    
    \draw [dashed] (a) -- (b);
    \draw [dashed] (b) -- (c);
    \draw [dashed] (a) -- (c);
    \draw [dashed] (a) -- (d);
    \draw [dashed] (b) -- (d);
    \draw [dashed] (c) -- (d);
    
    \draw [color=red] [line width=1.5pt] [line width=1.5pt] (a) .. controls (8.95, 3.5) .. (b);
    \draw  [color=red] [line width=1.5pt] (a) .. controls (4.5, 4.25) .. (c);
    \draw  [color=red] [line width=1.5pt] (b) .. controls (4.4, 5) .. (c);

    \draw [color=mygreen] [line width=1pt] (a) .. controls (2.5, 3.6) .. (c);
    \draw [color=mygreen] [line width=1pt] (a) .. controls (5, 1.25) .. (d);
    \draw [color=mygreen] [line width=1pt] (c) .. controls (6, -1) .. (d);
    
    \node at (4.3,5.4)[myblue,label=below:$ $,fill=white, circle](C) {$C$};
    \node at (6,1.35)[myblue,label=below:$ $,fill=white, circle](F) {$F$};
    
    \node at (2.8,3.3)[myblue,label=above:$x$,fill=black, circle](x) {};

  \end{tikzpicture}}
}
\caption{Cases in the proof of Claim~\ref{remark:lct-tw3-remark-0-3crosses}
of Lemma~\ref{lemma:lct-tw3-3crosses}.
(a)~$D$ jumps~$abc$.
(b)~$D$ is outside~$abc$ and~$F \in \cC_{abc}$.
(c)~$D$ is outside~$abc$ and~$F \in \cC_{acd}$.
}
\label{fig:lct-remark-0-3crosses}
\end{figure}

By Claim \ref{remark:lct-tw3-at-least-one-does-not-jump-Delta},
there exists a longest cycle~$F \in \cC \setminus \cC_{abd}$
that is~outside~$abd$.
If~${F \in \cC_{abc}}$ then 
${C_{ab} \cdot F_{ab}} \in \twocross$, a contradiction.
(Figure~\ref{fig:lct-remark-0-3crosses}(b)).
If~$F \in \cC_{acd}$ then
there exists a subpath~$R$ of~$C_{bc}$ starting at $b$, that
ends at a vertex~$x$ of~$F_{ac}$ and is internally disjoint
from~$F$.
Then, 
both~${C_{ab} \cdot R \cdot F_{xa}}$
and~${C_{ab} \cdot R \cdot (F-F_{xa})}$
are cycles, a contradiction
(Figure~\ref{fig:lct-remark-0-3crosses}(c)).

The case in which~$F \in \cC_{bcd}$ is symmetric
to the case in which $F \in \cC_{acd}$.
\end{proof}

\begin{claim}\label{remark:lct-tw3-remark-1-3crosses}
There exists a triple of vertices~${\Delta \subseteq V_t}$ such
that every cycle in~$\cC_{\Delta}$ jumps exactly two
of~${\{abd,acd,bcd,abc\} \setminus \{\Delta\}}$.
\end{claim}
\begin{proof}[Proof of Claim~\ref{remark:lct-tw3-remark-1-3crosses}]
By Claim \ref{remark:lct-tw3-remark-0-3crosses}, for 
each triple of vertices~$\Delta \subseteq V_t$,
every cycle in~$\cC_{\Delta}$
jumps two or three of~${\{abd, acd, bcd, abc\} \setminus \{\Delta\}}$.
Suppose by contradiction that for each~$\Delta \subseteq V_t$ 
there exists
a longest cycle
in~$\cC_{\Delta}$ that
jumps the three of~${\{abd, acd, bcd, abc\} \setminus \{\Delta\}}$.
Let~$C$,~$D$, and~$F$  be corresponding cycles in~{$\cC_{abc}$,
$\cC_{abd}$}, and~$\cC_{bcd}$ respectively.
We can easily conclude that
$|F_{bc}|=|C_{bc}|$
and
$|D_{bd}|=|F_{bd}|.$

\begin{figure}[hbt]
\center
\resizebox{.45\textwidth}{!}{  
  \begin{tikzpicture}
[scale=0.7, label distance=3pt, every node/.style={fill,circle,inner sep=0pt,minimum size=6pt}]
 
    \node at (6,3.4641)[myblue,label=below:$a$,fill=black, circle](a) {};
    \node at (6,10.3923)[myblue,label=above left:$b$,fill=black, circle](b) {};
    \node at (0,0)[myblue,label=above left :$c$,fill=black, circle](c) {};
    \node at (12,0)[myblue,label=above right:$d$,fill=black, circle](d) {};
    
    \draw [dashed] (a) -- (b);
    \draw [dashed] (b) -- (c);
    \draw [dashed] (a) -- (c);
    \draw [dashed] (a) -- (d);
    \draw [dashed] (b) -- (d);
    \draw [dashed] (c) -- (d);
    
    \draw [color=red] [line width=1.5pt] [line width=1.5pt] (a) .. controls (8.65, 4.5) .. (b);
    \draw [color=red] [line width=1.5pt] (a) .. controls (7, 1.25) .. (c);
    \draw [color=red] [line width=1.5pt] (b) .. controls (2, 6) .. (c);
    
    \draw [densely dashed] [color=blue] [line width=1.5pt] (a) .. controls (3.35, 4.5) .. (b);
    \draw [densely dashed] [color=blue] [line width=1.5pt] (a) .. controls (5, 1.25) .. (d);
    \draw [densely dashed] [color=blue] [line width=1.5pt] (b) .. controls (10, 6) .. (d);
    
    \draw [color=mygreen] [line width=1pt] (b) .. controls (4.7, 5) .. (c);
    \draw [color=mygreen] [line width=1pt] (c) .. controls (6, 1.8) .. (d);
    \draw [color=mygreen] [line width=1pt] (b) .. controls (7.3, 5) .. (d);
    
    \draw  [densely dashdotted] [color=purple][line width=1.5pt] (a) .. controls (4, 5.45) .. (c);
    \draw  [densely dashdotted] [color=purple][line width=1.5pt] (a) .. controls (8, 5.45) .. (d);
    \draw  [densely dashdotted] [color=purple][line width=1.5pt] (c) .. controls (6, -1) .. (d);
    
    \node at (2.25,6)[myblue,label=below:$ $,fill=white, circle](X) {$C$};
    \node at (9.75,6)[myblue,label=below:$ $,fill=white, circle](D) {$D$};
    \node at (6,-0.75)[myblue,label=below:$ $,fill=white, circle](H) {$H$};
    \node at (4.75,6)[myblue,label=below:$ $,fill=white, circle](F) {$F$};
    
  \end{tikzpicture}}
\caption{Situation in the proof of Claim~\ref{remark:lct-tw3-remark-1-3crosses}
of Lemma~\ref{lemma:lct-tw3-3crosses}.}
\label{fig:lct-remark-1-3crosses}
\end{figure}
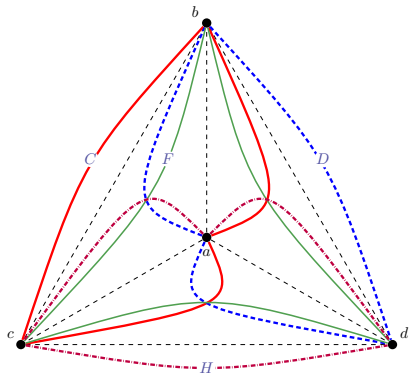

Let~$H$ be a corresponding cycle in~$\cC_{acd}$.
We have that both~${F_{cb}\cdot C_{ba}\cdot D_{ad}\cdot H_{dc}}$
and~${F_{db}\cdot D_{ba}\cdot C_{ac}\cdot H_{cd}}$
are cycles, and their lengths sum more than
$|F_{bc}|+ |F_{bd}|+|C_{ab}|+|C_{ac}|+|D_{ab}|+|D_{ad}|$.
By 
the last part of the previous paragraph,
this sum equals
$|C_{bc}|+ |D_{bd}|+|C_{ab}|+|C_{ac}|+|D_{ab}|+|D_{ad}|=|C|+|D|=2L$,
a contradiction
(Figure~\ref{fig:lct-remark-1-3crosses}).
\end{proof}

Suppose that~$abc$ is the triangle given by Claim~\ref{remark:lct-tw3-remark-1-3crosses}.
Let~${C \in \cC_{abc}}$ 
We may assume without loss of generality that~$C$ jumps~$abd$ and~$acd$,
and is outside~$bcd$.
By Claim~\ref{remark:lct-tw3-at-least-one-does-not-jump-Delta},
there exist cycles~${F,H,J \in \cC}$
such that~$F$ {2-intersects}~$abd$ but does not jump~$abd$,
$H$ {2-intersects}~$acd$ but does not jump~$acd$, and
$J$ {2-intersects}~$abc$ but does not jump~$abc$.


By similar arguments to the previous Claims, we
can show the next claim.

\begin{claim}\label{remark:lct-tw3-remark-2-3crosses}
$F \in \cC_{bcd}$ and~$H \in \cC_{bcd}$ \mbox{(Figure~\ref{fig:lct-remark-2-3crosses-1}).}
\end{claim}

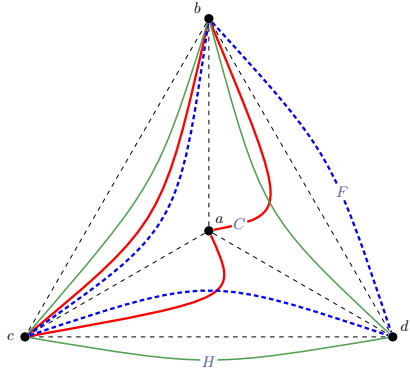
\begin{figure}[!hbt]
\center
\resizebox{.45\textwidth}{!}{    
  \begin{tikzpicture}
[scale=0.7, label distance=3pt, every node/.style={fill,circle,inner sep=0pt,minimum size=6pt}]
 
    \node at (6,3.4641)[myblue,label=above right:$a$,fill=black, circle](a) {};
    \node at (6,10.3923)[myblue,label=above left:$b$,fill=black, circle](b) {};
    \node at (0,0)[myblue,label= left :$c$,fill=black, circle](c) {};
    \node at (12,0)[myblue,label=above right:$d$,fill=black, circle](d) {};
    
    \draw [dashed] (a) -- (b);
    \draw [dashed] (b) -- (c);
    \draw [dashed] (a) -- (c);
    \draw [dashed] (a) -- (d);
    \draw [dashed] (b) -- (d);
    \draw [dashed] (c) -- (d);
    
    \draw [color=red] [line width=1.5pt] [line width=1.5pt] (a) .. controls (8.65, 4) .. (b);
    \draw [color=red] [line width=1.5pt] (a) .. controls (7, 1.25) .. (c);
    \draw [color=red] [line width=1.5pt] (b) .. controls (4.5, 3.75) .. (c);

     \draw [densely dashed] [color=blue] [line width=1.5pt] (b) .. controls (5, 3.5) .. (c);
    \draw [densely dashed] [color=blue][line width=1.5pt] (b) .. controls (10, 6) .. (d);
     \draw [densely dashed] [color=blue][line width=1.5pt] (c) .. controls (6, 2) .. (d);
     
     \draw [color=mygreen] [line width=1pt]  (c) .. controls (6, -1) .. (d);
    \draw [color=mygreen][line width=1pt] [line width=1pt] (b) .. controls (7.73, 4) .. (d);
     \draw [color=mygreen] [line width=1pt] (b) .. controls (4.3, 5) .. (c);

    \node at (10.35,4.75)[myblue,label=below:$ $,fill=white, circle](F) {$F$};
    \node at (7,3.7)[myblue,label=below:$ $,fill=white, circle](C) {$C$};
    \node at (6,-0.8)[myblue,label=below:$ $,fill=white, circle](C) {$H$};
    
  \end{tikzpicture} }
\caption{Situation of Claim~\ref{remark:lct-tw3-remark-2-3crosses}
of Lemma~\ref{lemma:lct-tw3-3crosses}.}
\label{fig:lct-remark-2-3crosses-1}
\end{figure}

The rest of the proof of Lemma \ref{lemma:lct-tw3-X2-empty-then-F}
is divided on two cases depending on~$J$.
If $J \in \cC_{bcd}$ (Figure~\ref{fig:3crosses-J}(a)),
then
${(C-C_{bc}) \cdot J_{bc}} \in \cC_{abc}$
and jumps the three of~$\{abd,acd,bcd\}$,
a contradiction to the choice of~$abc$
as in Claim \ref{remark:lct-tw3-remark-1-3crosses}.
Thus, 
${J \in \cC_{abd}}$ (Figure~\ref{fig:3crosses-J}(b)).

By Claim~\ref{remark:lct-tw3-remark-0-3crosses},~$J$ jumps both~$acd$ and~$bcd$.
By joining parts of cycles as in the previous claims, we can show that
$|F_{bd}| \geq L/2.$
Now, let~$R$ be the subpath of~$F_{cd}$ that
is internally disjoint from~$C_{ac}$,
starts at~$d$ and ends at  
a vertex~$x$ in~$C_{ac}$.
Let~$S$ be the subpath of~$C$ that starts at~$x$, ends at~$b$, and
is such that~$|S|\geq L/2$.
Then, 
~${F_{bd} \cdot R \cdot S}$ is a cycle longer
than~$L$, a contradiction.
This concludes the proof of the lemma.
\end{proof}

\begin{figure}[ht]
\subfigure[ ]{
\resizebox{.45\textwidth}{!}{
\begin{tikzpicture}
[scale=0.7, label distance=3pt, every node/.style={fill,circle,inner sep=0pt,minimum size=6pt}]
 
    \node at (6,3.4641)[myblue,label=above right:$a$,fill=black, circle](a) {};
    \node at (6,10.3923)[myblue,label=above left:$b$,fill=black, circle](b) {};
    \node at (0,0)[myblue,label= left :$c$,fill=black, circle](c) {};
    \node at (12,0)[myblue,label=above right:$d$,fill=black, circle](d) {};
    
    \draw [dashed] (a) -- (b);
    \draw [dashed] (b) -- (c);
    \draw [dashed] (a) -- (c);
    \draw [dashed] (a) -- (d);
    \draw [dashed] (b) -- (d);
    \draw [dashed] (c) -- (d);
    
    \draw [color=red] [line width=1.5pt] [line width=1.5pt] (a) .. controls (8.65, 4) .. (b);
    \draw [color=red] [line width=1.5pt] (a) .. controls (6.2, 1.25) .. (c);
    \draw [color=red] [line width=1.5pt] (b) .. controls (4.5, 3.75) .. (c);

     \draw [densely dashed] [color=blue] [line width=1.5pt] (b) .. controls (5, 3.5) .. (c);
    \draw [densely dashed] [color=blue][line width=1.5pt] (b) .. controls (10, 6) .. (d);
     \draw [densely dashed] [color=blue][line width=1.5pt] (c) .. controls (6, 2) .. (d);
     
     \draw [color=mygreen] [line width=1pt]  (c) .. controls (6, -1) .. (d);
    \draw [color=mygreen][line width=1pt] [line width=1pt] (b) .. controls (7.73, 4) .. (d);
     \draw [color=mygreen] [line width=1pt] (b) .. controls (4.3, 5) .. (c);


    \draw [densely dashdotted] [color=purple][line width=1.5pt] (b) .. controls (6.3, 4) .. (d);
    \draw  [densely dashdotted] [color=purple][line width=1.5pt] (b) .. controls (2, 6) .. (c);
    
    \draw  [densely dashdotted] [color=purple][line width=1.5pt] (c) .. controls (4.5, 1.5) and (3,2.5) .. (7,1) .. controls(10.3,0) and (10.5,0.5) ..(d);
    
    \node at (10.35,4.75)[myblue,label=below:$ $,fill=white, circle](F) {$F$};
    \node at (6,2.6)[myblue,label=below:$ $,fill=white, circle](C) {$C$};
    \node at (6,-0.8)[myblue,label=below:$ $,fill=white, circle](C) {$H$};
    \node at (1.65,4.75)[myblue,label=below:$ $,fill=white, circle](C) {$J$};
    
  \end{tikzpicture} }
}
\subfigure[ ]{
\resizebox{.45\textwidth}{!}{ 
\begin{tikzpicture}
[scale=0.7, label distance=3pt, every node/.style={fill,circle,inner sep=0pt,minimum size=6pt}]
 
    \node at (6,3.4641)[myblue,label=above left:$a$,fill=black, circle](a) {};
    \node at (6,10.3923)[myblue,label=above left:$b$,fill=black, circle](b) {};
    \node at (0,0)[myblue,label= left :$c$,fill=black, circle](c) {};
    \node at (12,0)[myblue,label=above right:$d$,fill=black, circle](d) {};
    
    \draw [dashed] (a) -- (b);
    \draw [dashed] (b) -- (c);
    \draw [dashed] (a) -- (c);
    \draw [dashed] (a) -- (d);
    \draw [dashed] (b) -- (d);
    \draw [dashed] (c) -- (d);
    
    \draw [color=red] [line width=1.5pt] [line width=1.5pt] (a) .. controls (8.65, 4) .. (b);
    \draw [color=red] [line width=1.5pt] (a) .. controls (6.2, 1.25) .. (c);
    \draw [color=red] [line width=1.5pt] (b) .. controls (4.5, 3.75) .. (c);

     \draw [densely dashed] [color=blue] [line width=1.5pt] (b) .. controls (5, 3.5) .. (c);
     \draw [densely dashed] [color=blue][line width=1.5pt] (c) .. controls (6, 2) .. (d);
    \draw [densely dashed] [color=blue][line width=1.5pt] (d) .. controls (9.9, 4) and (9.8,7) .. (9.5,9) .. controls(9.2,10) and (8,10) ..(b);

     \draw [color=mygreen] [line width=1pt]  (c) .. controls (6, -1) .. (d);
    \draw [color=mygreen][line width=1pt] [line width=1pt] (b) .. controls (7.73, 4) .. (d);
     \draw [color=mygreen] [line width=1pt] (b) .. controls (4.3, 5) .. (c);

    \draw  [densely dashdotted] [color=purple] [line width=1.5pt] (a) .. controls (5, 1.25) .. (d);
    
    \draw [densely dashdotted] [color=purple] [line width=1.5pt] (a) .. controls (8, 4.5) .. (b);
    \draw [densely dashdotted] [color=purple] [line width=1.5pt] (d) .. controls (14.5, 3.5) and (12,4) .. (12,4) .. controls(8,5) and (10,8.5) ..(b);
    
    \node at (9.75,6.75)[myblue,label=below:$ $,fill=white, circle](F) {$F$};
    \node at (6,2.6)[myblue,label=below:$ $,fill=white, circle](C) {$C$};
    \node at (6,-0.8)[myblue,label=below:$ $,fill=white, circle](C) {$H$};
    \node at (12.2,4)[myblue,label=below:$ $,fill=white, circle](C) {$J$};
    
  \end{tikzpicture} }
}
\caption{Cases in the last part of the proof of Lemma~\ref{lemma:lct-tw3-3crosses}.
(a)~$J \in \cB_{bcd}$ (Case 1). 
(b)~$J \in \cB_{abd}$ (Case 2).}
\label{fig:3crosses-J}
\end{figure}
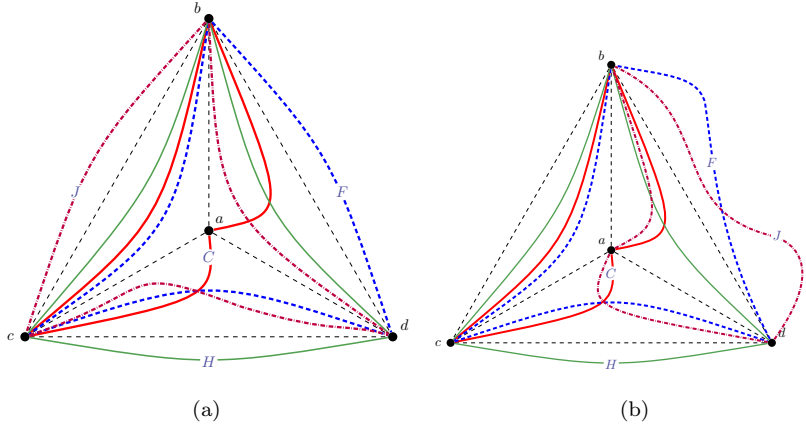

\section{Proof of the auxiliary lemmas}
\label{section:auxiliarylemmas}
The main result of this section is Corollary
\ref{corollary:lct-tw3-lct>1-implies-not-jump},
which is used many times in Section \ref{section:mainlemma}.
For all lemmas, propositions and corollaries in this subsection,
we fix a graph~$G$ with~$\tw(G)=3$,
a full tree decomposition~$(T,\calV)$ of~$G$, and
a node~$t \in V(T)$ with $V_t=\{a,b,c,d\}$.
For every~$ij \in \{ab,bc,ac\}$,
we set~$\cC_{ij}$ as the collection of all longest cycles in~$G$ that 2-jump
$abc$ at~$\{i,j\}$
and put~$\cC_{abc}$ as the collection of all longest cycles in~$G$ that 3-jump~$abc$.
Finally, we set~${\cC = \cC_{ab} \cup \cC_{bc} \cup \cC_{ac} \cup \cC_{abc}}$.

We begin by extending the definition of branch.
Given a component $A$ of $G-V_t$, we say that
$\Branch_t(A)=\Branch_t(v)$, where $v$ is a vertex in $A$.
Observe that $\Branch_t(A)$ is well defined, as every pair of vertices in~$A$
are not separated by $V_t$.

Given a triple of vertices~${\Delta \subseteq V_t}$,
it is denoted by~$\A_t(\Delta)$ the set of components inside~$\Delta$,
that is,~$${\A_t(\Delta)=\{A: A \mbox{ is a component of } {G-V_t} \mbox{ and }
\Branch_t(A)\subseteq \cB_t(\Delta)\}}.$$
(Recall the definition of $\cB_t(\Delta)$ in Section \ref{section:basic-concepts-treewidth}.)

\begin{lemma} \label{lemma:lct-tw3-pairwise-intersect}
If~$\cC_{ij}\neq \emptyset$ for every~$ij \in \{ab,bc,ac\}$, then
all longest cycles in~$\cC$ pairwise intersect each other in a
component of~$\A_t(abc)$.
\end{lemma}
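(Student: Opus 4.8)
The plan is to fix an arbitrary pair of longest cycles $C,D\in\cC$ and exhibit a vertex of $V(C)\cap V(D)$ that lies in some component of $\A_t(abc)$. The argument has a structural half, locating the pieces of a cycle of $\cC$ relative to $abc$ and to the components of $\A_t(abc)$, and a metric half, a length‑counting exchange argument that forces the desired overlap.

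First I would record the structural facts. Since $(T,\calV)$ is full, every neighbour $t'$ of $t$ with $abc\subseteq V_{t'}$ satisfies $V_t\cap V_{t'}=\{a,b,c\}$ (as $|V_t\cap V_{t'}|=3$), so the fourth vertex of such a bag is not $d$; combining this with the tree‑decomposition axioms shows $d\notin V_{t'}$ for every $t'$ with $\Branch_t(t')\subseteq\cB_t(abc)$, hence $d$ is $t$‑outside $abc$, and by Proposition~\ref{prop:core-sep-tt'} no vertex of a component in $\A_t(abc)$ is adjacent to $d$. Also, since $\cC\neq\emptyset$ by hypothesis, some cycle jumps $abc$ and hence has a vertex $t$‑inside $abc$, so $\cB_t(abc)$ is nonempty and each of $a,b,c$ is $t$‑inside $abc$. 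From these I would deduce: (i) if $P$ is a subpath of a cycle with both endpoints in $\{a,b,c\}$, no interior vertex in $\{a,b,c\}$, and $P$ is $t$‑inside $abc$, then either $P$ is a single edge or all interior vertices of $P$ lie in one component $A_P\in\A_t(abc)$; and (ii) if a subpath $Q$ of a cycle of $\cC$ has both endpoints in $\{a,b,c\}$ and is $t$‑outside $abc$, then $Q$ has no interior vertex in any component of $\A_t(abc)$ — such a component has its neighbourhood inside $\{a,b,c\}$, $Q$ misses the vertex of $\{a,b,c\}$ it does not contain, so $Q$ could only traverse the component between its two endpoints, which would make $Q$ $t$‑inside $abc$. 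In particular every $C\in\cC$ has a $t$‑inside $abc$ part $P_C$ joining two vertices of $\{a,b,c\}$ whose interior, if any, lies in a single component $A_C\in\A_t(abc)$, and every common vertex of two cycles of $\cC$ that lies in a component of $\A_t(abc)$ lies on a $t$‑inside part of each.

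The metric half is an exchange argument. Fix witnesses $C_{ab}\in\cC_{ab}$, $C_{bc}\in\cC_{bc}$, $C_{ac}\in\cC_{ac}$, with $t$‑inside parts $P_{ab},P_{bc},P_{ac}$ and $t$‑outside parts $P'_{ab},P'_{bc},P'_{ac}$; recall $C_{ij}$ omits the third vertex of $\{a,b,c\}$. By (i) and (ii), $P_{ij}$ and $P'_{k\ell}$ meet only in $\{a,b,c\}$, and two of $P_{ab},P_{bc},P_{ac}$ are internally disjoint unless they run through the same component of $\A_t(abc)$. Suppose no two of them share a component. Then $P_{ab}\cdot P_{bc}\cdot P_{ca}$, $P'_{ab}\cdot P_{bc}\cdot P_{ca}$, $P_{ab}\cdot P'_{bc}\cdot P_{ca}$ and $P_{ab}\cdot P_{bc}\cdot P'_{ca}$ are all cycles; writing $p=|P_{ab}|$, $q=|P_{bc}|$, $r=|P_{ac}|$ and using $|P_{ij}|+|P'_{ij}|=L$, the bound $\le L$ on the last three gives $q+r\le p$, $p+r\le q$ and $p+q\le r$, whence $p+q+r\le 0$ — impossible, since each part has at least one edge. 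So two of the three inside parts run through a common component of $\A_t(abc)$. Running the same argument with an arbitrary $C\in\cC$ in place of the witness of its own jump‑pair (and, if $C\in\cC_{abc}$, taking for $P_C$ a $t$‑inside $abc$‑part of $C$ joining a convenient pair) forces $P_C$ to share a component with a $t$‑inside part of one of the two remaining witnesses; combining these coincidences for a given pair $C,D$ yields a single component of $\A_t(abc)$ met by $t$‑inside parts of both, after which a short argument inside that component, in the spirit of the proofs of Lemmas~\ref{lemma:lct-tw3-X2-2-nonequivalent-then-F} and~\ref{lemma:lct-tw3-X2-all-equivalent-then-F} and using that $G$ is $2$‑connected, produces a common vertex of $C$ and $D$ there.

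I expect the main obstacle to be the bookkeeping in this last step: one has to run through every configuration of the pairs at which $C$ and $D$ jump $abc$ — the same pair, two different pairs among $\{ab,bc,ac\}$, and one or both cycles in $\cC_{abc}$ — and, above all, the degenerate configurations where an inside part is a single edge $ij\in E(G)$, in which case the cycle by itself meets no component of $\A_t(abc)$ and the conclusion must be salvaged by playing the cycle off against the two witnesses for the other pairs and checking that otherwise one of the glued unions would be a cycle of length exceeding $L$. As everywhere in the paper, Proposition~\ref{prop:core-sep-tt'} is the tool that certifies that the subpaths being glued are internally disjoint.
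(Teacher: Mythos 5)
Your structural preliminaries (each $t$-inside part has all its interior in a single component of $\A_t(abc)$; $t$-outside parts avoid all such components; $d$ is $t$-outside $abc$) are correct and match what the paper uses implicitly. Your three-inequality sum ($q+r\le p$, $p+r\le q$, $p+q\le r$, hence $p+q+r\le 0$) is a clean way to show that the three witnesses' inside parts cannot lie in three distinct components. But this is where the proposal stops short of the statement, and the remaining distance is not ``bookkeeping''. The lemma asks for a common \emph{vertex} of every pair $C,D\in\cC$ inside a component of $\A_t(abc)$, and two paths whose interiors lie in the same component need not intersect. Your plan to close this gap --- ``a short argument inside that component, using that $G$ is 2-connected'' --- does not exist in that form: 2-connectedness gives you two disjoint paths from a vertex to $\{a,b,c\}$, not an intersection of two prescribed internally disjoint paths. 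The mechanism the paper actually uses is different and stronger: if the inside parts $\bar C'$ (an $ab$-path) and $\bar D'$ (an $ac$-path) were \emph{internally disjoint}, one takes a subpath $R$ of the inside part of a cycle in $\cC_{bc}$ running from a vertex $x$ of $\bar C'$ to a vertex $y$ of $\bar D'$ and internally disjoint from both (it exists because that part starts at $b\in V(\bar C')$ and ends at $c\in V(\bar D')$), and then the two reassembled cycles $\bar C''\cdot \bar C'_{bx}\cdot R\cdot \bar D'_{ya}$ and $\bar D''\cdot \bar D'_{cy}\cdot R\cdot \bar C'_{xa}$ have total length $2L+2|R|>2L$. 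This forces genuine internal intersection, which pins down \emph{the} component $A$, and intersection (not co-location) is what propagates transitively to anchor every other cycle of $\cC$ to $A$.

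There is a second, related gap in your reduction of an arbitrary $C\in\cC$ to the witnesses. Replacing one witness by $C$ and rerunning your sum argument only shows that \emph{some} two of the new triple share a component --- possibly the two untouched witnesses --- so $P_C$ need not be anchored at all; and even if $P_C$ shares a component with witness $W_i$ and $P_D$ with witness $W_j$, nothing in your argument puts $W_i$ and $W_j$ in the same component, so you cannot conclude that $P_C$ and $P_D$ meet a common $A$. The paper avoids both problems by proving pairwise \emph{internal intersection} in a fixed order over all ten products $\cC_X\times\cC_Y$ with $X,Y\in\{ab,bc,ac,abc\}$ (cross-family pairs first, then same-family pairs using the already-anchored third family as the connector, then the $\cC_{abc}$ cases, which need their own reassembly tricks such as replacing $C_{ab}$ by $D''$ to produce a new member of $\cC_{ab}$). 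That case analysis, together with the connector construction, is the actual content of the lemma and is absent from the proposal.
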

\begin{proof}
Let~$(\bar{C},\bar{D}) \in \cC_{ab} \times \cC_{ac}$.
Let~$\bar{C}'$ and~$\bar{C}''$ be the two~$ab$-parts of~$\bar{C}$.
Let~$\bar{D}'$ and~$\bar{D}''$ be the two~$ac$-parts of~$\bar{D}$.
As~$\bar{C}$ and~$\bar{D}$ jump~$abc$, we may assume that~$\bar{C}'$
and~$\bar{D}'$ are inside~$abc$ and that~$\bar{C}''$ and~$\bar{D}''$ are outside~$abc$.
Suppose for a moment that~$\bar{C}'$ and~$\bar{D}'$ are internally disjoint.
Let~$F \in \cC_{bc}$, which exists by the hypothesis of the lemma.
Let~$F'$ and~$F''$ be the two~$bc$-parts of~$F$.
As~$F$ jumps~$abc$, we may assume
that~$F'$ is inside~$abc$ and that~$F''$ is outside~$abc$.
Let~$R$ be a subpath of~$F'$ that is internally disjoint
from~$\bar{C}'$ and~$\bar{D}'$ connecting~$\bar{C}'$ and~$\bar{D}'$.
Suppose that~$V(R) \cap V(\bar{C})=\{x\}$ and that~$V(R) \cap V(\bar{D})=\{y\}$.
Then~$\bar{C}'' \cdot \bar{C}'_{bx} \cdot R \cdot \bar{D}'_{ya}$
and~$\bar{D}'' \cdot \bar{D}'_{cy} \cdot R \cdot \bar{C}'_{xa}$
are both cycles,
one of them longer than~$L$,
a contradiction (Figure~\ref{fig:lct-tw3-pairw-inters-1}(a)).

Thus~$\bar{C}'$ and~$\bar{D}'$ internally intersect inside~$abc$.
As~$\bar{C}'$ and~$\bar{D}'$ {2-intersects}~$abc$, they are fenced by~$abc$.
Hence, as they internally intersect each other, there exists a component
$A \in \A_t(abc)$ such that 
all internal vertices of~$\bar{C}'$ and~$\bar{D}'$ are in~$A$.
We show that~$A$ is the desired component.
That is, all longest cycles in~$\cC$ pairwise intersect each other in~$A$.

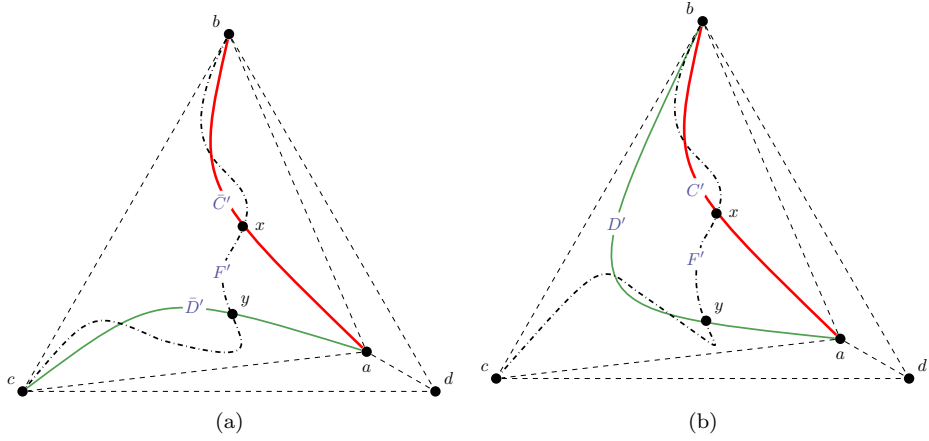
\begin{figure}[H]
\centering
\subfigure[ ]{
\resizebox{.5\textwidth}{!}{ 
  \begin{tikzpicture}
[scale=0.7, label distance=3pt, every node/.style={fill,circle,inner sep=0pt,minimum size=6pt}]
 
    \node at (10,1.1547)[myblue,label=below:$a$,fill=black, circle](a) {};
    \node at (6,10.3923)[myblue,label=above left:$b$,fill=black, circle](b) {};
    \node at (0,0)[myblue,label=above left :$c$,fill=black, circle](c) {};
    \node at (12,0)[myblue,label=above right:$d$,fill=black, circle](d) {};
    
    \draw [dashed] (a) -- (b);
    \draw [dashed] (b) -- (c);
    \draw [dashed] (a) -- (c);
    \draw [dashed] (a) -- (d);
    \draw [dashed] (b) -- (d);
    \draw [dashed] (c) -- (d);
    
    \draw [color=red] [line width=1.5pt] (a) .. controls (5, 6) .. (b);
    
    \draw [color=mygreen] [line width=1pt] (a) .. controls (4, 3) .. (c);

    \draw [dashdotted][color=black] [line width=1pt] (b) .. controls (3.5, 5.5) and (8,7) .. (6,4) .. controls(5,2.3) and (8.5,0.3) .. (4,1.5) .. controls(2,2.4) .. (c) ;
    
    \node at (5.8,5.5)[myblue,label=below:$ $,fill=white, circle](X) {$\bar{C}'$};
    \node at (5,2.5)[myblue,label=below:$ $,fill=white, circle](X) {$\bar{D}'$};
    \node at (5.8,3.5)[myblue,label=below:$ $,fill=white, circle](X) {$F'$};
    \node at (6.4,4.8)[myblue,label=right:$x$,fill=black, circle](x) {};
    \node at (6.1,2.25)[myblue,label=above right:$y$,fill=black, circle](y) {};
  \end{tikzpicture}}
}
\subfigure[ ]{
\resizebox{.5\textwidth}{!}{ 
  \begin{tikzpicture}
[scale=0.7, label distance=3pt, every node/.style={fill,circle,inner sep=0pt,minimum size=6pt}]
 
    \node at (10,1.1547)[myblue,label=below:$a$,fill=black, circle](a) {};
    \node at (6,10.3923)[myblue,label=above left:$b$,fill=black, circle](b) {};
    \node at (0,0)[myblue,label=above left :$c$,fill=black, circle](c) {};
    \node at (12,0)[myblue,label=above right:$d$,fill=black, circle](d) {};
    
    \draw [dashed] (a) -- (b);
    \draw [dashed] (b) -- (c);
    \draw [dashed] (a) -- (c);
    \draw [dashed] (a) -- (d);
    \draw [dashed] (b) -- (d);
    \draw [dashed] (c) -- (d);
    
    \draw [color=red] [line width=1.5pt] (a) .. controls (5, 6) .. (b);
    
    \draw [color=mygreen] [line width=1pt] (a) .. controls (2, 2) .. (b);

    \draw [dashdotted][color=black] [line width=1pt] (b) .. controls (3.5, 5.5) and (8,7) .. (6,4) .. controls(5,1.3) and (8.5,-0.5) .. (4,2.6) .. controls(3,3.4) .. (c) ;
    
    \node at (5.8,5.5)[myblue,label=below:$ $,fill=white, circle](X) {$C'$};
    \node at (3.5,4.5)[myblue,label=below:$ $,fill=white, circle](X) {$D'$};
    \node at (5.8,3.5)[myblue,label=below:$ $,fill=white, circle](X) {$F'$};
    \node at (6.4,4.8)[myblue,label=right:$x$,fill=black, circle](x) {};
    \node at (6.1,1.68)[myblue,label=above right:$y$,fill=black, circle](y) {};
  \end{tikzpicture}}
}
\caption{Cases in the proof of Lemma~\ref{lemma:lct-tw3-pairwise-intersect}. 
(a) Pair~$(\bar{C},\bar{D}) \in \cC_{ab} \times \cC_{ac}$ with~$\bar{C}'$ and~$\bar{D}'$ inside~$abc$ and the path~$F'$.
(b) Pair~$(C,D) \in \cC_{ab} \times \cC_{ab}$ with~$C'$ and~$D'$ inside~$abc$ and the path~$F'$.}
\label{fig:lct-tw3-pairw-inters-1}
\end{figure}

Let~$(C,D) \in \cC_{ab} \times \cC_{ac}$.
Let~$C'$ and~$C''$ be the two~$ab$-parts of~$C$.
Let~$D'$ and~$D''$ be the two~$ac$-parts of~$D$.
Analogously to the previous paragraph,~$C'$ and~$D'$ internally intersect
inside~$abc$.
Also,~$C'$ and~$\bar{D}'$ internally intersect inside~$abc$.
Hence the internal vertices of~$C'$ and~$D'$ are in~$A$.
We conclude that~$C'$ and~$D'$ intersect in a vertex of~$A$.
A similar proof shows that every pair of cycles in~$(\cC_{ab} \times \cC_{bc}) \cup (\cC_{bc} \times \cC_{ac})$
intersect each other in a vertex of~$A$.

Let~$(C,D) \in \cC_{ab} \times \cC_{ab}$.
Let~$C'$ and~$C''$ be the two~$ab$-parts of~$C$.
Let~$D'$ and~$D''$ be the two~$ab$-parts of~$D$,
with $C'$ and $D'$ inside $abc$.
By the previous paragraph,~$C'$ and~$\bar{D}'$ internally intersect
in~$A$, and~$D'$ and~$\bar{D}'$ internally intersect
in~$A$. Hence, all internal vertices of $C'$ and~$D'$ are in~$A$.
So, it suffices to prove that 
$C'$ and~$D'$ internally intersect
inside~$abc$.

Suppose for a moment that~$C'$ and~$D'$ are internally disjoint.
Let~$F \in \cC_{bc}$, which exists by the hypothesis of the lemma.
Let~$R$ be a subpath of~$F$ that is inside $abc$, internally disjoint
from~$C'$ and~$D'$ and connecting~$C'$ and~$D'$. 
Let~${\{x\}=V(R) \cap V(C')}$ and~${\{y\}=V(R) \cap V(D')}$.
Then~${C'' \cdot C'_{bx}  \cdot R \cdot D'_{ya}}$ 
and~${D'' \cdot D'_{by} \cdot R \cdot C'_{xa}}$
are both cycles,
one of them longer than~$L$,
a contradiction (Figure~\ref{fig:lct-tw3-pairw-inters-1}(b)).
A similar proof shows that every pair of
cycles in~${(\cC_{bc} \times \cC_{bc}) \cup (\cC_{ac} \times \cC_{ac})}$
intersect in~$A$.

Let~$(C,D) \in \cC_{abc} \times \cC_{ab}$.
Let~$D'$ and~$D''$ be the two~$ab$-parts of~$D$, with~$D'$ inside~$abc$.
By the previous paragraph,~$D'$ and~$\bar{C}'$ internally intersect
in~$A$. Hence, all internal vertices of~$D'$ are in~$A$.
So, it suffices to prove that 
$D'$ internally intersects any of~$\{C_{ab}, C_{bc}, C_{ac}\}$.
Suppose not.
As~$C$ jumps~$abc$, at least one of~$\{C_{ab}, C_{bc}, C_{ac}\}$
is inside~$abc$.
If~$C_{ab}$ is inside~$abc$ then
${C_{ac} \cdot C_{cb} \cdot D'}$ and~${D'' \cdot C_{ab}}$ are longest cycles.
As~${D'' \cdot C_{ab} \in \cC_{ab}}$,
by the previous paragraph, 
${D'' \cdot C_{ab}}$ intersects~$D$ in a vertex of~$A$.
As~$D''$ is outside~$abc$,~$C_{ab}$ and~$D'$ internally intersect
in~$A$, a contradiction.
Hence~$C_{ab}$ is outside~$abc$.
Let~${F \in \cC_{bc}}$.
By considering a subpath $R$ of $F$ as in
Figure~\ref{fig:lct-tw3-pairw-inters-2}(a), we obtain
two cycles~${C_{ab} \cdot D'_{bx} \cdot R \cdot C_{yc} \cdot C_{ca}}$
and~${D'' \cdot C_{by} \cdot R \cdot D'_{xa}}$
a contradiction.
Hence, every pair of cycles in~$\cC_{abc} \times \cC_{ab}$
intersect in~$A$.
A similar proof shows that every 
pair of cycles in~$(\cC_{abc} \times \cC_{bc}) \cup (\cC_{abc} \times \cC_{ac})$
intersect in~$A$.

\begin{figure}[hbt]
\centering
\subfigure[ ]{
\resizebox{.5\textwidth}{!}{ 
  \begin{tikzpicture}
[scale=0.7, label distance=3pt, every node/.style={fill,circle,inner sep=0pt,minimum size=6pt}]
 
    \node at (10,1.1547)[myblue,label=below:$a$,fill=black, circle](a) {};
    \node at (6,10.3923)[myblue,label=above left:$b$,fill=black, circle](b) {};
    \node at (0,0)[myblue,label=above left :$c$,fill=black, circle](c) {};
    \node at (12,0)[myblue,label=above right:$d$,fill=black, circle](d) {};
    
    \draw [dashed] (a) -- (b);
    \draw [dashed] (b) -- (c);
    \draw [dashed] (a) -- (c);
    \draw [dashed] (a) -- (d);
    \draw [dashed] (b) -- (d);
    \draw [dashed] (c) -- (d);
    
    \draw [color=mygreen] [line width=1pt] (a) .. controls (5, 6) .. (b);
    
    \draw [color=red] [line width=1.5pt] (b) .. controls (3.6, 2) .. (c);

    \draw [dashdotted][color=black] [line width=1pt] (b) .. controls (3.5, 5.5) and (9.5,5.3) .. (5.5,5) .. controls(1,5) and (4.5,4.5) .. (4,2.5) .. controls (3,1.5) and (1.5,2) .. (c) ;
    
    \node at (7.8,3.4)[myblue,label=below:$ $,fill=white, circle](X) {$D'$};
    \node at (4.8,6)[myblue,label=below:$ $,fill=white, circle](X) {$C_{bc}$};
    \node at (5.5,5)[myblue,label=below:$ $,fill=white, circle](X) {$R$};
    \node at (6.25,5.05)[myblue,label=below:$x$,fill=black, circle](x) {};
    \node at (4.4,5)[myblue,label=below right:$y$,fill=black, circle](y) {};
  \end{tikzpicture}}
}
\subfigure[ ]{
\resizebox{.5\textwidth}{!}{   
  \begin{tikzpicture}
[scale=0.7, label distance=3pt, every node/.style={fill,circle,inner sep=0pt,minimum size=6pt}]
 
    \node at (10,1.1547)[myblue,label=below:$a$,fill=black, circle](a) {};
    \node at (6,10.3923)[myblue,label=above left:$b$,fill=black, circle](b) {};
    \node at (0,0)[myblue,label=above left :$c$,fill=black, circle](c) {};
    \node at (12,0)[myblue,label=above right:$d$,fill=black, circle](d) {};
    
    \draw [dashed] (a) -- (b);
    \draw [dashed] (b) -- (c);
    \draw [dashed] (a) -- (c);
    \draw [dashed] (a) -- (d);
    \draw [dashed] (b) -- (d);
    \draw [dashed] (c) -- (d);
    
    \draw [color=red] [line width=1.5pt] (a) .. controls (5, 3) .. (b);
    \draw [color=mygreen] [line width=1pt] (a) .. controls (7, 5) .. (b);
    
    \draw [color=red] [line width=1.5pt] (b) .. controls (3, 2) .. (c);
    \draw [color=mygreen] [line width=1pt] (b) .. controls (4, 2) .. (c);

    \draw [dashdotted][color=black] [line width=1pt] (a) .. controls (1.5, 1.5) and (11.5,3) .. (7.25,5.05) .. controls(1,5.5) and (4.5,4.5) .. (4,2.5) .. controls (3,1.5) and (1.5,2) .. (c) ;
    
    \node at (6.8,7)[myblue,label=below:$ $,fill=white, circle](X) {$D_{ab}$};
    \node at (4.2,6)[myblue,label=below:$ $,fill=white, circle](X) {$C_{bc}$};
    \node at (6.2,3)[myblue,label=below:$ $,fill=white, circle](X) {$C_{ab}$};
     \node at (6.3,5.2)[myblue,label=below:$ $,fill=white, circle](X) {$R$};
    \node at (4.5,4.2)[myblue,label=below:$ $,fill=white, circle](X) {$D_{bc}$};
    \node at (7.28,5.05)[myblue,label=below:$y$,fill=black, circle](y) {};
    \node at (5.5,5.15)[myblue,label=below left:$x$,fill=black, circle](x) {};
  \end{tikzpicture}}
}
\caption{
Cases in the proof of Lemma~\ref{lemma:lct-tw3-pairwise-intersect}.
(a)~Pair~$(C,D) \in \cC_{abc} \times \cC_{ab}$, with~$C_{bc}$ and~$D'$ inside~$abc$, and the path~$F'$.
(b)~Pair~$(C,D) \in \cC_{abc} \times \cC_{abc}$
with~$C_{ab}, C_{bc}$,~$D_{ab}$ and~$D_{bc}$ inside~$abc$, and the path~$F'$.}
\label{fig:lct-tw3-pairw-inters-2}
\end{figure}
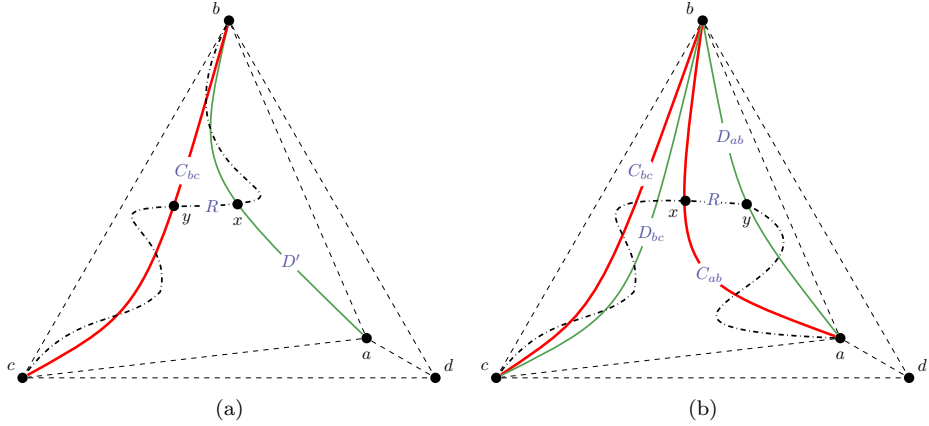

Finally, let~$(C,D) \in \cC_{abc} \times \cC_{abc}$.
Suppose that only one of~$\{C_{ab}, C_{ac}, C_{bc}\}$, say~$C_{ab}$, is inside~$abc$.
Let~$F \in \cC_{ab}$,~$F'$ be the~$ab$-part of~$F$ that is inside~$abc$,
and~$F''$ be the~$ab$-part of~$F$ that is outside~$abc$.
Then~$C_{ac} \cdot C_{cb} \cdot F'$ and~$F'' \cdot C_{ab}$ are longest cycles.
By the previous paragraph, as~$F'' \cdot C_{ab} \in \cC_{ab}$,~$C_{ab}$
and~$D$ intersect in a vertex of~$A$.
Hence, we may assume that two of
$\{C_{ab}, C_{ac}, C_{bc}\}$ are inside~$abc$, and, analogously,
two of~$\{D_{ab}, D_{ac}, D_{bc}\}$ are inside~$abc$.
Without loss of generality we have two cases.

First suppose that~$C_{ab}$, $C_{bc}$,
$D_{ab}$, and $D_{bc}$ are inside~$abc$.
Let~$F \in \cC_{ac}$.
By considering a subpath $R$ of $F$ as in
Figure~\ref{fig:lct-tw3-pairw-inters-2}(b), we obtain two cycles~$(C-C'_{bx}) \cdot R \cdot D'_{yb}$ and~$(D-D'_{by}) \cdot R \cdot C'_{xb}$
are cycles, 
a contradiction.

Now suppose that~$\{C_{ab}, C_{bc}\}$ and
$\{D_{ab}, D_{ac}\}$ are inside~$abc$.
Let~$F \in \cC_{ab}$.
By considering a subpath $R$ of $F$
that starts
at a vertex~$x$ of~$C$ and finishes
at a vertex~$y$ of~$D$,
we have two cases
as in Figure~\ref{fig:lct-tw3-pairw-inters-3}
In the former, we obtain two cycles
${(C-C_{ax})\cdot R \cdot D'_{ya}}$ and~${(D-D'_{ay}) \cdot R \cdot C_{xa}}$; in the later
we obtain two cycles
$(C-C_{cx})\cdot R \cdot D_{yc}$ and~$(D-D_{cy}) \cdot R \cdot C_{xc}$, a contradiction.

 \begin{figure}[hbt]
\centering
\subfigure[ ]{
\resizebox{.5\textwidth}{!}{    
  \begin{tikzpicture}
[scale=0.7, label distance=3pt, every node/.style={fill,circle,inner sep=0pt,minimum size=6pt}]
 
    \node at (10,1.1547)[myblue,label=below:$a$,fill=black, circle](a) {};
    \node at (6,10.3923)[myblue,label=above left:$b$,fill=black, circle](b) {};
    \node at (0,0)[myblue,label=above left :$c$,fill=black, circle](c) {};
    \node at (12,0)[myblue,label=above right:$d$,fill=black, circle](d) {};
    
    \draw [dashed] (a) -- (b);
    \draw [dashed] (b) -- (c);
    \draw [dashed] (a) -- (c);
    \draw [dashed] (a) -- (d);
    \draw [dashed] (b) -- (d);
    \draw [dashed] (c) -- (d);
    
    \draw [color=red] [line width=1.5pt] (a) .. controls (5, 3) .. (b);
    \draw [color=mygreen] [line width=1pt] (a) .. controls (7, 5) .. (b);
    
    \draw [color=red] [line width=1.5pt] (b) .. controls (3, 2) .. (c);
    \draw [color=mygreen] [line width=1pt] (a) .. controls (5, 2.5) .. (c);

    \draw [dashdotted][color=black] [line width=1pt] (a) .. controls (1.5, 1.5) and (11.5,3) .. (7.25,5.05) .. controls(1,5.5) and (11,6.5) .. (5.5,7.5) .. controls (3.5,8) and (6.5,8) .. (b) ;
    
    \node at (7,5.7)[myblue,label=below:$ $,fill=white, circle](X) {$D_{ab}$};
    \node at (4.2,6)[myblue,label=below:$ $,fill=white, circle](X) {$C_{bc}$};
    \node at (5.7,4)[myblue,label=below:$ $,fill=white, circle](X) {$C_{ab}$};
     \node at (7.6,3)[myblue,label=below:$ $,fill=white, circle](X) {$R$};
    \node at (4.5,2)[myblue,label=below:$ $,fill=white, circle](X) {$D_{ac}$};
    \node at (8.24,3.5)[myblue,label=below:$y$,fill=black, circle](y) {};
    \node at (7,2.4)[myblue,label=above:$x$,fill=black, circle](x) {};
  \end{tikzpicture}}
}
\subfigure[ ]{
\resizebox{.5\textwidth}{!}{ 
  \begin{tikzpicture}
[scale=0.7, label distance=3pt, every node/.style={fill,circle,inner sep=0pt,minimum size=6pt}]
 
    \node at (10,1.1547)[myblue,label=below:$a$,fill=black, circle](a) {};
    \node at (6,10.3923)[myblue,label=above left:$b$,fill=black, circle](b) {};
    \node at (0,0)[myblue,label=above left :$c$,fill=black, circle](c) {};
    \node at (12,0)[myblue,label=above right:$d$,fill=black, circle](d) {};
    
    \draw [dashed] (a) -- (b);
    \draw [dashed] (b) -- (c);
    \draw [dashed] (a) -- (c);
    \draw [dashed] (a) -- (d);
    \draw [dashed] (b) -- (d);
    \draw [dashed] (c) -- (d);
    
    \draw [color=red] [line width=1.5pt] (a) .. controls (5, 3) .. (b);
    \draw [color=mygreen] [line width=1pt] (a) .. controls (7, 5) .. (b);
    
    \draw [color=red] [line width=1.5pt] (b) .. controls (3, 2) .. (c);
    \draw [color=mygreen] [line width=1pt] (a) .. controls (5, 2.5) .. (c);

    \draw [dashdotted][color=black] [line width=1pt] (a) .. controls (7.5, 4.5) and (6.5,0.5) .. (5,2.05) .. controls(-1,5.5) and (11,6.5) .. (5.5,7.5) .. controls (3.5,8) and (6.5,8) .. (b) ;
    
    \node at (7.2,5.3)[myblue,label=below:$ $,fill=white, circle](X) {$D_{ab}$};
    \node at (4.2,6)[myblue,label=below:$ $,fill=white, circle](X) {$C_{bc}$};
    \node at (5.7,4)[myblue,label=below:$ $,fill=white, circle](X) {$C_{ab}$};
     \node at (4,2.7)[myblue,label=below:$ $,fill=white, circle](X) {$R$};
    \node at (3.8,1.7)[myblue,label=below:$ $,fill=white, circle](X) {$D_{ac}$};
    \node at (5,2)[myblue,label=below:$y$,fill=black, circle](y) {};
    \node at (3.35,3.5)[myblue,label=left:$x$,fill=black, circle](x) {};
  \end{tikzpicture}}
}
\caption{Pair~$(C,D) \in \cC_{abc} \times \cC_{abc}$,
with~$C_{ab}, C_{bc}, D_{ab}$ and~$D_{ac}$ inside~$abc$.
(a)~$x \in C_{ab}$ and~$y \in D_{ab}$.
(b)~$x \in C_{bc}$ and~$y \in D_{ac}$.}
\label{fig:lct-tw3-pairw-inters-3}
\end{figure}
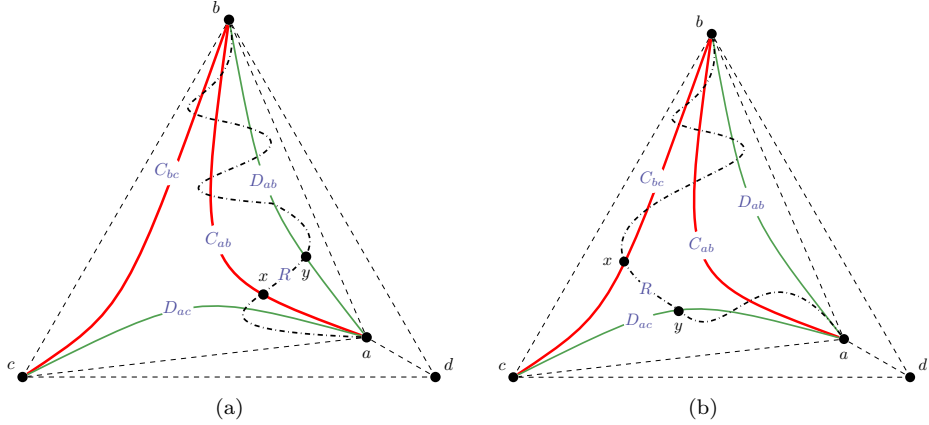

As~$\cC \times \cC = (\cC_{ab} \times \cC_{ab}) \cup (\cC_{ac} \times \cC_{ac}) \cup
(\cC_{bc} \times \cC_{bc}) \cup (\cC_{ab} \times \cC_{ac}) \cup (\cC_{ab} \times \cC_{bc}) \cup
{(\cC_{bc} \times \cC_{ac})} \cup (\cC_{abc} \times \cC_{ab}) \cup (\cC_{abc} \times \cC_{ac})
\cup (\cC_{abc} \times \cC_{bc}) \cup (\cC_{abc} \times \cC_{abc})$, the proof follows.
\end{proof}

Given a component~$A$ of~$G-V_t$, it is denoted by
~$t^{*}(A)$
the neighbor of $t$
such that $\Branch_t(A)=\Branch_t(t^{*}(A))$.



\begin{lemma} \label{lemma:lct-tw3-invariant}
If~$\cC_{ij}\neq \emptyset$ for every~$ij \in \{ab,bc,ac\}$,
then all cycles in~$\cC$ intersect at a common 
vertex~$t$-inside~$abc$.
\end{lemma}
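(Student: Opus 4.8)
The plan is to promote the pairwise intersection supplied by Lemma~\ref{lemma:lct-tw3-pairwise-intersect} to a common one, working entirely inside the single component $A \in \A_t(abc)$ that that lemma produces. First I would record the local structure around $A$: writing $t^{*}=t^{*}(A)$, fullness of $(T,\calV)$ forces $|V_{t^{*}}\cap V_t|=3$, and since $\Branch_t(A)\subseteq\cB_t(abc)$ this triple is exactly $\{a,b,c\}$; hence $\{a,b,c\}$ separates $A$ from $V(G)\setminus(A\cup\{a,b,c\})$ and no vertex of $A$ is adjacent to $d$. From the proof of Lemma~\ref{lemma:lct-tw3-pairwise-intersect}, for every $C\in\cC$ each $abc$-part of $C$ that is inside $abc$ is a path with endpoints in $\{a,b,c\}$ all of whose internal vertices lie in $A$, and $C$ meets $A$. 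Finally, since $\Branch_t(A)=\Branch_t(t^{*})\subseteq\cB_t(abc)$, every vertex of $A$ is $t$-inside $abc$; so it suffices to produce a vertex of $A$ lying on every cycle of $\cC$.

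Next I would fix $\bar C\in\cC_{ab}$ (nonempty by hypothesis), let $P$ be its inside $ab$-part, and set $W=V(P)\cap A$, which is precisely the set of internal vertices of $P$; list them $w_1,\dots,w_k$ in their order along $P$ from $a$ to $b$. By Lemma~\ref{lemma:lct-tw3-pairwise-intersect} every $C\in\cC$ meets $W$. If some $C$ meets $W$ in a single vertex $w_i$, then, again by pairwise intersection inside $A$, every cycle of $\cC$ must contain $w_i$ and we are done. Otherwise I claim that for every $C\in\cC$ the set $V(C)\cap W$ is an interval $\{w_p,\dots,w_q\}$ of this ordering: if $C$ contained $w_i$ and $w_j$ with $i<j$ but avoided some $w_m$ with $i<m<j$, then, using that the relevant subpaths of $C$ and the outside $ab$-part of $\bar C$ are mutually internally disjoint (everything in play is confined to $A\cup\{a,b,c\}$ on one side and to the complement of $\cB_t(abc)$ on the other), one can splice a $w_i$--$w_j$ subpath of $C$ into $\bar C$ in place of $P[w_i,w_j]$; balancing lengths against the extremality of $L$ first forces the two competing $w_i$--$w_j$ paths to have equal length, and then a second exchange, made with an innermost choice of the skipped vertex $w_m$ and of the witnessing cycles, yields a cycle strictly longer than $L$, a contradiction. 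Once each $V(C)\cap W$ is an interval of the linear order on $W$, the Helly property for intervals of a line gives a common $w_\ell$, which is the desired vertex. The cycles in $\cC_{abc}$, whose trace on $A$ is a union of two paths rather than one, I would handle by running the interval argument on whichever of the two inside parts realizes the intersection with $\bar C$, together with the identity already used in the proof of Lemma~\ref{lemma:lct-tw3-pairwise-intersect} that gluing an inside part of a $\cC_{abc}$-cycle to an outside $ab$-part of a $\cC_{ab}$-cycle again lies in $\cC_{ab}$.

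The step I expect to be the main obstacle is the interval claim. The splicings have to be arranged so that the rerouted $w_i$--$w_j$ paths really are internally disjoint from the portions of $\bar C$ they are attached to, and the equal-length case must be broken by the innermost choice; a careless exchange merely rebalances lengths without producing a contradiction. Indeed, small examples of an $a$--$b$ path family that pairwise intersects internally but has no common internal vertex show that the conclusion genuinely needs the longest-cycle hypothesis and fails for arbitrary path families. A secondary, purely organizational difficulty is keeping track of the cycles in $\cC_{bc}$ and $\cC_{ac}$, whose inside parts join different pairs of $\{a,b,c\}$ than $\bar C$ does, so that their intersection with $\bar C$ still lands in the fixed component $A$ and feeds into the same interval structure.
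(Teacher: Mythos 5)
Your overall strategy---reduce to the component $A$ given by Lemma~\ref{lemma:lct-tw3-pairwise-intersect}, linearly order the trace $W$ of a fixed $\bar C\in\cC_{ab}$ on $A$, show every cycle of $\cC$ meets $W$ in an interval, and invoke Helly for intervals---is not the paper's argument, and it has gaps that I do not think can be repaired along the lines you indicate. The paper instead proceeds by induction on the number of vertices $t$-inside $abc$: it passes to the neighbouring bag $V_{t^{*}}=\{a,b,c,f\}$ with $t^{*}=t^{*}(A)$, observes that if some cycle of $\cC$ misses $f$ then every cycle of $\cC$ must $t^{*}$-jump $\mathit{abf}$ and the families $\cC^{'}_{ij}$ for $ij\in\{ab,\mathit{bf},\mathit{af}\}$ are nonempty, and then applies the induction hypothesis to the strictly smaller triple $\mathit{abf}$. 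Your proposal never uses the tree decomposition beyond locating $A$, and that is where the trouble starts.

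Concretely: (1) the interval claim is the heart of your argument and is exactly the part you leave unproved; I believe it is false. Take $P=aw_1w_2w_3b$, $\bar C=P\cdot\bar C''$, and a second longest cycle $C=(aw_1yw_3b)\cdot\bar C''$ with $y\in A$ adjacent only to $w_1$ and $w_3$: both cycles 2-jump $abc$ at $\{a,b\}$, have equal length, and $V(C)\cap W=\{w_1,w_3\}$ skips $w_2$. The exchange you describe only yields $|P[w_i,w_j]|\le\min$ of the lengths of the two $w_iw_j$-arcs of $C$, and the ``second exchange with an innermost choice'' is unspecified and produces no longer cycle in this configuration. (2) Even granting the interval claim, Helly needs the intervals $V(C)\cap W$ to pairwise intersect; Lemma~\ref{lemma:lct-tw3-pairwise-intersect} only guarantees that two cycles of $\cC$ share a vertex of $A$, not a vertex of $W=V(\bar C)\cap A$, so two cycles could trace disjoint intervals on $P$ while meeting elsewhere in $A$. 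The same objection defeats your shortcut that a cycle meeting $W$ in a single vertex $w_i$ forces $w_i$ onto every other cycle. (3) A cycle of $\cC_{abc}$ has two inside $abc$-parts, so its trace on $W$ is in general a union of two intervals, which breaks Helly again. Repairing all of this essentially forces you back to an argument that exploits the separator structure of the decomposition, which is precisely what the paper's induction does.
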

\begin{proof}
By Lemma~\ref{lemma:lct-tw3-pairwise-intersect}, there exists a component
$A \in \A_t(abc)$ such that
all cycles in~$\cC$ pairwise intersect in~$A$.
Let~$t^{*}:=t^{*}(A)$.
As~$\cC_{ij}\neq \emptyset$ for every~${ij \in \{ab,bc,ac\}}$,
there exists an~$ij$-part~$\bar{C}_{ij}$ of a cycle in $\cC_{ij}$
such that
all internal vertices of~$\bar{C}_{ij}$ are in~$A$, for every~${ij \in \{ab,bc,ac\}}$.
Let~$V_{t^{*}}=\{a,b,c,f\}$.
We proceed by induction on the number of vertices~$t$-inside~$abc$
different from $a$,~$b$, and $c$.
If~$f$ is the only vertex~$t$-inside~$abc$ different from~$a$,~$b$, and~$c$, then,
as all cycles in~$\cC$ pairwise intersect in~$A$,~$f$ is in all cycles in~$\cC$ and the proof follows.
So, there exist at least two vertices~$t$-inside~$abc$
different from~$a$,~$b$, and~$c$.
Suppose by contradiction that there is no vertex in~$A$
that belongs to all cycles in~$\cC$.
Then, there exists (at least) one longest cycle in~$\cC$ that does not contain~$f$.
Let~$C$ be such a cycle. Without loss of generality, assume that~$C \in \cC_{ab}$. The proof when
~$C \in \cC_{abc}$ is similar.




Let~$C$ be such a cycle. Without loss of generality, assume that~$C \in \cC_{ab}$.
Let~$C'$ and~$C''$ be the two~$ab$-parts of~$C$,
with
$C'$~$t$-inside~$abc$.
Let~$A'$ be the component of $G-{V_{t^{*}}}$
where the internal vertices of $C'$ lie. 
It is straightforward to see that
$A' \in \A_{t^{*}}(\mathit{abf})$.

As every cycle in $\cC$ must intersect with
$C$ in a vertex of $A'$, we can conclude that
\begin{equation}\label{eq:every-cycle-in-cC-t'-jumps-abf}
\mbox{ every cycle in } \cC \mbox{ }t^{*}\mbox{-}\mbox{ jumps } \mathit{abf}.
\end{equation}

Note also that 
every cycle in $\cC_{ac}$
$t^{*}$-jumps~$\mathit{abf}$ at~$\{a,f\}$,
every cycle in $\cC_{bc}$
$t^{*}$-jumps~$\mathit{abf}$ at~$\{b,f\}$.
We conclude that~${\cC}_{ij}^{'} \neq \emptyset$ for every~$ij \in \{ab, \mathit{bf}, af\}$,
where~${\cC}^{'}_{ij}$ is the collection of all longest cycles that 2-jumps~$\mathit{abf}$ at~$\{i,j\}$.

Note that every vertex $t^{*}$-inside~$\mathit{abf}$ is also~$t$-inside~$abc$.
Hence, as~$c$ is~$t$-inside~$abc$ but not~$t^{*}$-inside~$\mathit{abf}$, 
by induction hypothesis, there exists
a common vertex to all cycles that~$t^{*}$-jump~$\mathit{abf}$. Also, by
(\ref{eq:every-cycle-in-cC-t'-jumps-abf}), there exists a common vertex
to all cycles in~$\cC$.
Moreover, this vertex is~$t$-inside~$abc$.

This concludes the proof of the lemma.
\end{proof}

\begin{corollary}\label{corollary:lct-tw3-lct>1-implies-not-jump}
Let~$\Delta$ be a triple of vertices in~$V_t$.
If~$\lct(G){>1}$ and for every pair of vertices~$\{i,j\}$ in~$\Delta$ there exists a longest cycle that 2-jumps
$\Delta~$ at~$\{i,j\}$, 
then there exists a longest cycle~$C$ in~$G$ such that either $C$ intersects $V_t$ at most once, or
one of the following possibilities is true:
\begin{itemize}
\item[$(i)$]~$C$ is~$t$-outside~$\Delta$,
\item[$(ii)$]~$C$ is~$t$-inside~$\Delta$ and {2-intersects}~$\Delta$,
\item[$(iii)$]~$C$ is~$t$-inside~$\Delta$, {3-intersects}~$\Delta$, and is fenced by~$\Delta$.
\end{itemize}
\end{corollary}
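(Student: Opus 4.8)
The plan is to deduce the statement from Lemma~\ref{lemma:lct-tw3-invariant}. We may assume $\Delta=\{a,b,c\}$, so that the hypothesis is precisely that $\cC_{ij}\neq\emptyset$ for every $ij\in\{ab,bc,ac\}$ (in the notation fixed for this subsection). By Lemma~\ref{lemma:lct-tw3-invariant} there is a vertex $v$ that is $t$-inside $\Delta$ (in particular $v\notin V_t$) and lies on every cycle of $\cC$, that is, on every longest cycle that jumps $\Delta$. Since $\lct(G)>1$, some longest cycle $C$ avoids $v$; hence $C$ does not jump $\Delta$. The remaining task is to show that $C$, chosen with a little care, falls into one of the listed alternatives.

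First I would dispose of the immediate cases. If $|V(C)\cap\Delta|\ge 2$ then, not jumping $\Delta$, the cycle $C$ is $t$-inside or $t$-outside $\Delta$: the outside case is $(i)$, and the inside case is $(ii)$ when $|V(C)\cap\Delta|=2$ and is $(iii)$ when $|V(C)\cap\Delta|=3$ and $C$ is moreover fenced by $\Delta$. If $|V(C)\cap\Delta|\le 1$ and $|V(C)\cap V_t|\le 1$ then $C$ intersects $V_t$ at most once. So the only configurations that are not covered by this dichotomy are: $(A)$ $C$ is $t$-inside $\Delta$, $3$-intersects $\Delta$, but crosses $\Delta$; and $(B)$ $|V(C)\cap\Delta|=1$ while $d\in V(C)$, so that $C$ $2$-intersects $V_t$ at $\{x,d\}$ with $x\in\Delta$. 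The plan is to choose $C$ so as to minimize $|V(C)\cap V_t|$, and then $|V(C)\cap\Delta|$, among all longest cycles avoiding $v$, and to show that neither $(A)$ nor $(B)$ can then arise.

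To rule these out I would use the following structural facts about the full tree decomposition. For a neighbour $t'$ of $t$ with $\Delta\subseteq V_{t'}$ we have $|V_t\cap V_{t'}|=3$, hence $V_t\cap V_{t'}=\Delta$; thus by Proposition~\ref{prop:core-sep-tt'} the set $\Delta$ separates $d$ from every vertex that is $t$-inside $\Delta$ and distinct from $a,b,c$, which in turn forces every component in $\A_t(\Delta)$ to be adjacent in $G$ only to vertices of $\Delta$, and the internal vertices of any $\Delta$-part of any cycle to lie inside a single component of $G-V_t$. Moreover, a $\Delta$-part of a cycle of $\cC$ that is outside $\Delta$ cannot meet a component of $\A_t(\Delta)$ (entering and leaving such a component only through the two endpoints of the part, it would be swallowed by the component and become $t$-inside $\Delta$); consequently $v$ always lies on an inside $\Delta$-part, whose interior lies in the fixed component $A\in\A_t(\Delta)$ containing $v$. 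In configuration $(A)$, the three $\Delta$-parts of $C$ lie in components of $\A_t(\Delta)$, at least two of them distinct; cutting $C$ at $a,b,c$ and splicing in the two $\Delta$-parts of a suitable $\bar C_{ij}\in\cC_{ij}$ — internally disjoint from the relevant portion of $C$ because the inside part lives in $A$ while the outside part avoids $\A_t(\Delta)$, once the index $ij$ is chosen to be a shortest $\Delta$-part of $C$ — yields two cycles whose lengths sum to more than $2L$, a contradiction, unless the splice instead produces a longest cycle through $v$ with strictly fewer vertices in $V_t$, which contradicts the choice of $C$. In configuration $(B)$, the fact that a cycle $\bar C_{jk}\in\cC_{jk}$ with $\{j,k\}=\Delta\setminus\{x\}$ must meet $C$ in at least two vertices, none of which lies in $V_t\setminus\{d\}$, forces one of the two components of $G-V_t$ carrying $C$ to be adjacent to a second vertex of $\Delta$; reading off from the tree decomposition that this component's neighbourhood in $V_t$ equals $\{x,d\}$ together with that vertex, a further splice with $\bar C_{jk}$ produces a longer cycle or a longest cycle with fewer vertices in $V_t$, again a contradiction.

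I expect the last paragraph to be the main obstacle: configurations $(A)$ and $(B)$ are genuinely realizable by arbitrary longest cycles, so excluding them really does require the interaction between $C$ and the cycles of $\cC$ through $v$, and the delicate points are verifying internal disjointness of the spliced subpaths and bookkeeping exactly which vertices of $V_t$ each component of $G-V_t$ touches. Everything outside that paragraph is a routine case analysis once Lemma~\ref{lemma:lct-tw3-invariant} is available.
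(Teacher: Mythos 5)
Your overall strategy is the paper's: invoke Lemma~\ref{lemma:lct-tw3-invariant} to get a common vertex of all cycles that jump $\Delta$, use $\lct(G)>1$ to pick a longest cycle $C$ missing that vertex (hence not jumping $\Delta$), and then classify $C$. The problem is the step you yourself flag as the main obstacle: your configuration $(A)$ is exactly the one case the paper has to work for, and the splice you propose does not produce a contradiction. If you cut $C$ at the endpoints of a shortest $\Delta$-part $C_{ij}$ and splice in the two $\Delta$-parts $\bar C_{ij}'$, $\bar C_{ij}''$ of a single cycle $\bar C_{ij}\in\cC_{ij}$, the two resulting cycles have total length $|C|+|\bar C_{ij}|=2L$ exactly, so neither need be longer than $L$; and your fallback (``a longest cycle through $v$ with strictly fewer vertices in $V_t$'') is not developed --- the spliced cycle need not avoid $v$, so it is not in the family over which you minimized, and nothing forces its intersection with $V_t$ to drop. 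The paper's actual argument is different in a small but essential way: since $C$ crosses $\Delta$, some part, say $C_{ab}$, has all its internal vertices outside the component $A$ of $G-\Delta$ containing the common vertex $x$; taking $D\in\cC_{ac}$ and $F\in\cC_{bc}$ (whose \emph{inside} parts $D',F'$ must run through $A$ because they contain $x$), the path $C_{ab}$ is internally disjoint from $D',F'$ (interiors in different components) and from the outside parts $D'',F''$ (because $C$ is $t$-inside $\Delta$), so $D'\cdot C_{ab}\cdot F''$ and $F'\cdot C_{ab}\cdot D''$ are both cycles. The part $C_{ab}$ is used \emph{twice}, so the lengths sum to $2L+2|C_{ab}|>2L$, and one cycle exceeds $L$. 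Without some such reuse of a piece of $C$ your accounting cannot beat $2L$, so as written the key case remains open.

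On your configuration $(B)$ ($|V(C)\cap V_t|=2$ but $|V(C)\cap\Delta|\le 1$): you are right that the trichotomy inside/outside/jumps only applies to cycles meeting $\Delta$ at least twice, and the paper's proof passes over this silently (it assumes $C$ meets $V_t$ at least twice and immediately treats $C$ as inside or outside $\Delta$). Flagging it is a fair observation about the corollary, but your proposed resolution --- reading off the neighbourhood of a component of $G-V_t$ from the tree decomposition and performing ``a further splice'' --- is a sketch of intent rather than an argument, and carries the same $2L$-accounting problem as above. So even granting the case analysis, neither of the two nontrivial cases you isolate is actually closed.
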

\begin{proof}
Without loss of generality, let~$\Delta=abc$.
For every~${ij \in \{ab,bc,ac\}}$,
let~$\cC_{ij}$ be the collection of all longest cycles in~$G$ that 2-jump
$abc$ at~$\{i,j\}$.
Let~$\cC_{abc}$ be the collection of all longest cycles in~$G$ that 3-jump~$abc$.
Let~${\cC = {\cC_{ab} \cup \cC_{bc} \cup \cC_{ac} \cup \cC_{abc}}}$.
As~${\cC_{ij}\neq \emptyset}$ for every~${ij \in \{ab,bc,ac\}}$,
by Lemma~\ref{lemma:lct-tw3-invariant}, all longest cycles 
in~$\cC$ intersect
at a common vertex, say~$x$, that is~$t$-inside~$abc$.
As~${\lct(G)>1}$, there exists a longest cycle~$C$ in~$G$ that is not in~$\cC$.
Hence,~$C$ does not jump~$abc$.
We may assume that $C$ intersects $V_t$ at least twice.
If~$C$ is~$t$-outside~$abc$, then~$(i)$ holds and we are done.
So, assume that~$C$ is~$t$-inside~$abc$.
If~$C$ {2-intersects}~$\Delta$, then~$(ii)$ holds and we are done.
Hence, we may assume that~$C$ {3-intersects}~$\Delta$.

Suppose by contradiction that~$C$ crosses~$\Delta$.
Let~$C_{ab},C_{bc}$, and~$C_{ac}$ be the~$abc$-parts of~$C$.
Let~$A$ be the component of~$G-\Delta$ where~$x$ lies.
As~$C$ crosses~$\Delta$, at least
one of~$\{C_{ab},C_{bc},C_{ac}\}$, say~$C_{ab}$, has all its internal
vertices not in~$A$.
Let~${D \in \cC_{ac}}$. Let~${F \in \cC_{bc}}$.
Let~$D'$ and~$D''$ be the two~$ac$-parts
of~$D$.
Let~$F'$ and~$F''$ be the two~$bc$-parts
of~$F$.
As~$D$ and~$F$ jump~$abc$, we may assume that~$D'$ and~$F'$ are~$t$-inside~$abc$,
and that~$D''$ and~$F''$ are~$t$-outside~$abc$.
Moreover, as~$D$ and~$F$ contain~$x$, and~$x$ is in~$A$, both~$D'$
and~$F'$ have all its internal vertices in~$A$.
As~$C_{ab}$ has all its internal
vertices not in~$A$,~$C_{ab}$ is internally disjoint from~$D'$ and~$F'$.
As~$C$ is~$t$-inside $abc$, $C_{ab}$ is internally disjoint from~$D''$ and~$F''$.
Hence,~${D' \cdot C_{ab} \cdot F''}$ and
${F' \cdot C_{ab} \cdot D''}$ are both cycles,
one of them longer than~$L$, a contradiction.
\end{proof}

\section{Concluding remarks}
 \label{section:conclusion}

In this paper, we showed
all longest cycles in a 2-connected partial $3$-tree intersect.
This is not true for partial 4-trees.
Indeed, there exists a 2-connected partial 4-tree~$G$
given by Thomassen on
15 vertices~\cite[Figure 16]{Shabbir13} in which not all longest cycles intersect, but there is a pair of vertices meeting all longest cycles
Hence, we propose the following conjecture.
\begin{conjecture}
For every 2-connected partial 4-tree, there exists a set of two vertices such that all longest cycles has at least one vertex in this set.
\end{conjecture}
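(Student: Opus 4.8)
The plan is to mimic the architecture of the treewidth-$3$ proof one level up. Fix a $2$-connected partial $4$-tree $G$ and, by Proposition~\ref{prop:full-tree-dec}, a full tree decomposition $(T,\calV)$ with $|V_t|=5$ for every node and $|V_t\cap V_{t'}|=4$ for every edge $tt'\in E(T)$. The target analogue of Lemma~\ref{lemma:lct-tw3-lct=1-or-fenced} would read: for every node $t$, either some set of two vertices meets all longest cycles of $G$, or there is a longest cycle that is fenced by $V_t$ and intersects $V_t$ at most four times (that is, at most $|V_t|-1$ times, exactly as in the width-$3$ case). Granting such a lemma, I would run the directed-forest argument from the proof of Theorem~\ref{theorem:lct-tw3-lct=1} essentially verbatim: orient an edge $tt'$ of $T$ towards $t'$ whenever some longest cycle fenced by $V_t$ has $\Branch_t(\,\cdot\,)=\Branch_t(t')$, take the last arc of a maximal directed path, and obtain longest cycles $C\in\calF(t)$, $D\in\calF(t')$ with $\Branch_t(C)=\Branch_t(t')$ and $\Branch_{t'}(D)=\Branch_{t'}(t)$. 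As before, one shows the private vertices of $V_t$ and $V_{t'}$ are not used, so $V(C)\cap V(D)$ lies in the $4$-element separator $S:=V_t\cap V_{t'}$, while $|V(C)\cap V(D)|\ge 2$ by Proposition~\ref{prop:two-longest-cycles-intersect}.

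The combinatorial heart would then be an analysis, carried out inside the $4$-element set $S$, of how the relevant longest cycles meet $S$, with the goal of exhibiting two vertices of $S$ hitting all of them — where the width-$3$ argument reduced a triangle all the way down to a single vertex. The benchmark is the $15$-vertex example of Thomassen recalled above, in which no single vertex meets all longest cycles but some pair does: any proof must therefore be tight and must force precisely that extremal configuration when the transversal cannot be improved to one vertex. Concretely I would classify longest cycles by $|V(C)\cap S|\in\{2,3,4\}$ and, within each case, by which $4$-subsets (``faces'') of the two $5$-bags they jump, set up collections $\cC_\Delta$ as in Section~\ref{section:auxiliarylemmas} indexed by those $4$-subsets, and prove exchange inequalities (``if two inside parts are internally disjoint, splice a longer cycle'') to pin down the structure.

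The main obstacle — and the point where width $4$ genuinely departs from width $3$ — is the analogue of Lemmas~\ref{lemma:lct-tw3-pairwise-intersect}–\ref{lemma:lct-tw3-invariant} and Corollary~\ref{corollary:lct-tw3-lct>1-implies-not-jump}. In the $3$-tree proof one shows that all longest cycles jumping a fixed triangle pass through a \emph{common vertex} inside that triangle, and this single pivot is what ultimately yields $\lct(G)=1$. For $4$-trees the right statement should be that all longest cycles jumping a fixed $4$-vertex face share a \emph{common pair} of vertices inside that face; but two cycles sharing a common pair are far less rigid than two cycles sharing a common vertex — they may diverge on both sides of the pair — so the induction behind Lemma~\ref{lemma:lct-tw3-invariant} (decreasing the number of inside vertices by pivoting to a neighbouring bag) does not carry over without new ideas. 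Establishing and exploiting this ``common pair'' phenomenon, and then packaging it into the bag-level lemma above, is where the bulk of the work and essentially all of the difficulty lies; the directed-forest reduction and the splicing bookkeeping should transfer with only cosmetic changes.
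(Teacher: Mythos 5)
This statement is a \emph{conjecture} in the paper: the author offers no proof of it, and explicitly presents it as an open problem motivated by Thomassen's $15$-vertex partial $4$-tree. Your submission is, by your own account, a plan of attack rather than a proof, and it cannot be credited as a proof because the step that carries all the mathematical content is left unestablished. Concretely, you reduce everything to a hoped-for analogue of Lemma~\ref{lemma:lct-tw3-invariant} and Corollary~\ref{corollary:lct-tw3-lct>1-implies-not-jump}, namely that all longest cycles jumping a fixed $4$-vertex face share a common \emph{pair} of inside vertices, and you then concede that the inductive mechanism of Lemma~\ref{lemma:lct-tw3-invariant} (pivoting from the bag $V_t$ to the neighbouring bag $V_{t^*}$ and decreasing the number of inside vertices) ``does not carry over without new ideas.'' That concession is the gap: two cycles forced through a common pair are not rigid in the way two cycles through a common vertex are, the splicing arguments of Lemma~\ref{lemma:lct-tw3-pairwise-intersect} no longer produce a single component $A$ hosting all the inside parts, and nothing in your outline replaces them.

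There is a second, independent gap in the reduction itself. Even granting your bag-level lemma, the endgame of Theorem~\ref{theorem:lct-tw3-lct=1} does not transfer ``essentially verbatim'': there the $3$-element separator $V_t\cap V_{t'}$ is whittled down to a single common vertex by playing two further longest cycles $F$ and $H$ against $C$ and $D$, using $\lct(G)>1$ to avoid each vertex of the separator in turn. With a $4$-element separator $S$ you would need to show that \emph{some} pair of $S$ meets every longest cycle, which requires avoiding $\binom{4}{2}=6$ candidate pairs simultaneously and handling cycles meeting $S$ in $2$, $3$ or $4$ vertices; no case analysis is supplied, and the Thomassen example shows any such analysis must be tight. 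So the proposal identifies the right shape of a possible argument but proves nothing beyond what the width-$3$ machinery already gives; the conjecture remains open.
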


\bibliography{bibliografia}

\end{document}